\documentclass[11pt,reqno]{amsart}
\usepackage[a4paper,margin=27mm]{geometry}
\usepackage[T1]{fontenc}
\usepackage{lmodern,amssymb,mathtools,booktabs,longtable,microtype}
\usepackage{listings,xcolor,tabularx,array}
\relax {}
\usepackage{mathtools}
\usepackage{xcolor}
\newcolumntype{Y}{>{\raggedright\arraybackslash}X}

\usepackage[colorlinks=true,linkcolor=blue,citecolor=blue,urlcolor=blue]{hyperref}
\hypersetup{pdftitle={Sharp anticanonical volume bounds for Gorenstein Fano fourfolds of Picard number one},
  pdfauthor={Pinxian Bie and Zhengjie Yu},pdfsubject={Revised research manuscript},pdfkeywords={Fano fourfolds, anticanonical volume, Gorenstein singularities}}
\allowdisplaybreaks[2]
\newtheorem{theorem}{Theorem}[section]
\newtheorem{proposition}[theorem]{Proposition}
\newtheorem{lemma}[theorem]{Lemma}
\newtheorem{corollary}[theorem]{Corollary}
\theoremstyle{definition}
\newtheorem{definition}[theorem]{Definition}
\newtheorem{example}[theorem]{Example}
\theoremstyle{remark}
\newtheorem{remark}[theorem]{Remark}
\DeclareMathOperator{\Cl}{Cl}
\DeclareMathOperator{\Pic}{Pic}

\DeclareMathOperator{\rk}{rk}
\DeclareMathOperator{\Supp}{Supp}
\DeclareMathOperator{\Sym}{Sym}
\DeclareMathOperator{\End}{End}
\DeclareMathOperator{\Proj}{Proj}
\DeclareMathOperator{\Spec}{Spec}
\DeclareMathOperator{\im}{im}
\DeclareMathOperator{\lcm}{lcm}
\DeclareMathOperator{\codim}{codim}
\DeclareMathOperator{\NCart}{NCart}
\newcommand{\Disc}{\widehat{\operatorname{Disc}}}
\newcommand{\QQ}{\mathbb Q}
\newcommand{\ZZ}{\mathbb Z}
\newcommand{\CC}{\mathbb C}
\newcommand{\PP}{\mathbb P}
\newcommand{\OO}{\mathcal O}
\newcommand{\TT}{\mathcal T}
\newcommand{\EE}{\mathcal E}
\newcommand{\FF}{\mathcal F}
\newcommand{\Chat}{\widehat C}
\newcommand{\qsim}{\sim_{\QQ}}
\newcommand{\J}{\mathcal J}
\newcommand{\reg}{\mathrm{reg}}

\title[Volumes of Gorenstein Fano fourfolds]{Sharp anticanonical volume bounds for Gorenstein Fano fourfolds of Picard number one}
\author{Pinxian Bie}
\address{\normalfont School of Mathematical Sciences, Fudan University, Shanghai, China}
\email{23110180002@m.fudan.edu.cn}
\author{Zhengjie Yu}
\address{\normalfont School of Mathematical Sciences, Fudan University, Shanghai, China}
\email{23110180053@m.fudan.edu.cn}
\date{October 25, 2026}
\subjclass[2020]{Primary 14J45; Secondary 14E30, 14F06, 32M25}
\keywords{Fano fourfold, anticanonical volume, Gorenstein singularities}

\begin{document}
\begin{abstract}
Let \(X\) be a complex \(\QQ\)-factorial Gorenstein Fano fourfold with \(\rho(X)=1\). If \(X\) has canonical singularities, we prove that \((-K_X)^4\le3528\), with equality if and only if \(X\simeq\PP(1,1,12,28,42)\). In fact, \((-K_X)^4\ge3130\) already implies this isomorphism. If \(X\) has terminal singularities, we prove the sharp bound \((-K_X)^4\le648\), with equality if and only if \(X\simeq\PP(1,1,1,1,2)\). In the canonical case, we also obtain the sharp bound \((-K_X)^4\le1024\) when a positive generator of \(\Cl(X)/\mathrm{tors}\) is Cartier outside a finite set. The proofs use jet estimates involving all Harder--Narasimhan slopes of the tangent sheaf, the geometry of algebraic foliations, and the integrality of intersection numbers on a general leaf. The equality case in the canonical bound is determined by two computations of divisorial section rings.
\end{abstract}
\maketitle
\setcounter{tocdepth}{1}
\tableofcontents
\clearpage

\section{Introduction}\label{sec:intro}

Throughout this paper, we work over the field of complex numbers $\CC$.

The birational geometry of Fano varieties has been intensively studied in recent years. For fixed \(n\) and \(\varepsilon>0\), Birkar's boundedness theorem gives a uniform upper bound for the anticanonical volumes of \(\varepsilon\)-log canonical Fano \(n\)-folds \cite[Theorem 1.1]{BAB}. A natural question is to determine an explicit upper bound for the anticanonical volume, and in particular to find the optimal bound and characterize its equality cases. Addressing this question requires more explicit geometric and numerical invariants than boundedness alone.

Several sharp bounds have been established in low dimensions. In dimension three, Prokhorov proved the sharp degree bound
\(72\) in the canonical Gorenstein case \cite{ProkhorovDegree}. Recently, Jiang--Zhang--Zou showed that any canonical weak Fano
threefold has anticanonical volume at most \(72\)
\cite[Theorem 1.1]{JZZ}, while Jiang--Liu established the upper bound \(66\) for its
Weil Fano index \cite[Theorem 1.1]{JL}. Their work and that of
Jiang--Liu--Liu \cite{JLL} combine singular Riemann--Roch formulas,
Harder--Narasimhan filtrations, and the geometry of algebraic
foliations. In dimension four, Birkar obtained the first explicit general upper bound for canonical Fano varieties in 
\cite[Theorem 1.1]{Birkar4} without assuming the Gorenstein condition,
\(\QQ\)-factoriality, or Picard number one. That bound is extremely large and is not expected to be close to optimal.

In this paper, we study \(\QQ\)-factorial Gorenstein Fano fourfolds of Picard number one. In the canonical case, we establish the sharp upper bound \(3528\) for the anticanonical volume and show that equality holds uniquely for the weighted projective space \(\PP(1,1,12,28,42)\) (Theorem~\ref{thm:canonical}); in fact, any such fourfold with \((-K_X)^4\ge 3130\) is already isomorphic to \(\PP(1,1,12,28,42)\). In the terminal case, we prove the sharp bound \(648\), attained uniquely by \(\PP(1,1,1,1,2)\) (Theorem~\ref{thm:terminal}). Our first result states the terminal bound.

\begin{theorem}\label{thm:terminal}
Let \(X\) be a \(\QQ\)-factorial Gorenstein terminal Fano fourfold with \(\rho(X)=1\). Then
\[
                         (-K_X)^4\le648.
\]
Moreover, equality holds if and only if \(X\simeq\PP(1,1,1,1,2)\).
\end{theorem}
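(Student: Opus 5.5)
The plan is to combine a Fano index argument with the slope and foliation method described in the abstract. Write \(-K_X\sim_{\QQ} q A\), where \(A\) is an ample generator of \(\Cl(X)/\mathrm{tors}\); since \(\rho(X)=1\) and \(X\) is \(\QQ\)-factorial, such an \(A\) exists. If \(q\ge 5\), then I expect the Kobayashi--Ochiai type classification to force \(X\simeq\PP^4\), with \((-K_X)^4=625<648\).

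Next comes the terminal input. A Gorenstein terminal fourfold has isolated singularities in codimension \(\ge 3\) only: its singular locus has dimension \(\le 1\), and it is in fact smooth in codimension \(2\). Moreover, \(K_X\) is Cartier. I would use this to show that \(A\) is Cartier outside a finite set. Along any curve of singularities, the local class groups of isolated cDV-type threefold sections control non-Cartierness, and for Gorenstein terminal singularities these are torsion-free factorial away from finitely many points. This places us in the regime of the \(1024\) bound announced in the abstract. That regime is not sharp enough, so the terminal case needs a refinement.

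The main engine is the jet estimate. Consider the Harder--Narasimhan filtration of \(\TT_X\), with slopes \(\mu_1\ge\dots\ge\mu_k\) measured against \(A\). Counting sections of \(mA\) via vanishing of jets at a general point gives \(h^0(mA)\) at least the polynomial volume \(A^4m^4/24\). Jets along the filtration pieces cost at most \(m/\mu_i\)-weighted orders, which yields an inequality of the form
\[
(-K_X)^4 \le \frac{q^4 \cdot 4!}{\prod_i \mu_i^{r_i}}\cdot(\text{correction}),
\]
with \(\sum r_i\mu_i=q\). If \(\TT_X\) is semistable, this gives a bound like \(4^4\cdot(\dots)\) well below \(648\).

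Otherwise, the maximal destabilizing subsheaf \(\FF\) is an algebraically integrable foliation with rationally connected general leaves. I would then bound its slope using the integrality of \(-K_{\FF}\cdot A^{\dim\FF-1}\) restricted to a general leaf. On a leaf of dimension \(r\), the restriction of \(A\) and \(-K_\FF\) satisfy a Fano-type index bound. For terminal \(X\), the leaf closures meet the singular locus only in finitely many points, so the integrality is sharper. The case analysis is over rank \(\FF=1,2,3\). For rank one, the leaves are rational curves with \(-K_\FF\cdot \ell\le 2\), which forces \(\mu_1\le 2A\cdot\ell\) and yields \(A\cdot\ell\ge 1\), \(\ell\) a line-like curve.

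The main obstacle is rank one, where the extremal example \(\PP(1,1,1,1,2)\) lives. There \(q=6\) with respect to \(\OO(1)\), \(A^4=1/2\), and the foliation is given by lines through the singular point. I would compute that the bound becomes \(q^4A^4\le 648\) once one shows \(2A^4\in\ZZ\); the latter follows from the index of the unique non-Cartier point, since terminal Gorenstein forces a \(\frac12\)-type behavior there. This requires a careful local analysis of the singular point and Riemann--Roch with the singular correction terms.

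For equality, all inequalities must be equalities. This means the sections of \(A\) and \(2A\) generate a ring with Hilbert function equal to that of \(\PP(1,1,1,1,2)\), and a section-ring computation then gives \(X\simeq\PP(1,1,1,1,2)\).
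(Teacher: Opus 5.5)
Your outline names the right tools (HN slopes, jets, foliations on HN terms, a section ring at the end), but several steps are wrong, and the inputs that actually produce $648$ are missing.

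\textbf{The $q\ge5$ reduction and the slope inequalities.} The reduction ``$q\ge5\Rightarrow X\simeq\PP^4$'' is false. Kobayashi--Ochiai concerns a Cartier polarization, while your $q$ is the Weil index. Your own extremal example $\PP(1,1,1,1,2)$ has $q=6$ with $A=\OO(1)$, so this step would discard the equality case. No such reduction is needed: $q^3\mid V$ plus the jet bound already gives $q\le9$. More seriously, the jet bound in its correct form is $V\le 625/(256\prod_i\lambda_i)$ in the normalized HN slopes. It reaches $648$ only because of the terminal inequalities $\lambda_1\le\tfrac13$ and $\lambda_1+\lambda_2<\tfrac23$ (Proposition~\ref{prop:terminalslopes}), and nothing in your foliation paragraph yields them. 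The bound $-K_{\FF}\cdot\ell\le2$ alone gives only $u\le1$. The observation that leaf closures meet the singular locus in finitely many points does not separate terminal from isolated canonical singularities. Indeed, $\PP(1,1,1,1,4)$ is $\QQ$-factorial, Gorenstein, $\rho=1$, smooth in codimension two, has $A$ Cartier outside one point, and has volume $1024$. With only the canonical slope bounds, jets plus $q^3\mid V$ give nothing better than the caps in Table~\ref{tab:finitecaps}, which go up to $1944$.

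The paper gets $u\le\tfrac13$ in rank one as follows. A horizontal exceptional divisor gives a boundary component of coefficient at least $1$ in the log-leaf formula. On the relative terminal model, $\Supp\Delta\subseteq\Supp R$; this is where terminality enters, through positive integral discrepancies. Together these give $\deg R\ge1$ and $\deg L\ge3$. The strict rank-two bound needs the blow-up analysis over $\mathbb F_e$ in Lemma~\ref{lem:strictsurface}. Strictness really matters: for example, it removes $\lambda=(\tfrac13,\tfrac13,\tfrac29,\tfrac19)$ at $q=9$, where $B(\lambda)\approx890>729$.

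\textbf{The divisibility step, the residual cases, and equality.} Your divisibility step is unfounded, and it would not bound $V$ anyway. A general $X$ need not have a unique non-Cartier point, and Gorenstein terminal points need not be of ``$\tfrac12$-type'': $\tfrac15(1,2,3,4)$ is isolated, Gorenstein and terminal. Moreover, $2A^4\in\ZZ$ is only a congruence on $V=q^4A^4$, not an inequality. The right input is the one you already stated: Kawamata's Cartierness theorem on transverse threefold sections makes $A$ Cartier outside finitely many points, so $q^3\mid V$.

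Combined with the jet bound and the terminal slope inequalities, this still leaves $V=686$ ($q=7$) and $V=729$ ($q=9$) above $648$. These need an argument your plan does not contain. The Weil Riemann--Roch recurrence gives $h^0(A)=k/6+C/(12q)$. The discriminant comparison gives $C\ge\tfrac12(1-\sum_i\lambda_i^2)V$. The finite jet estimate $\J_\lambda(h^0(A))\le h^0(A)/q$ bounds $h^0(A)$ from above, and its equality case forces $\dim\im|A|=1$, which is impossible for a primitive $A$.

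Finally, ``all inequalities become equalities'' does not by itself fix $q$ or the ring. At $V=648$ both $q=3$ and $q=6$ survive the numerics. The paper excludes $q=3$ using the HN types of the canonical extension $\EE_X$, equality in the discriminant inequality, and projective flatness, which would make $2A$ divisible by $4$. For $q=6$ it shows that $\CC[x_0,\dots,x_3,y]\to R(X,A)$ is an isomorphism using three inputs:
\begin{itemize}
\item the Hilbert series $1/((1-t)^4(1-t^2))$ from the recurrence;
\item injectivity of $\Sym^2H^0(A)\to H^0(2A)$ from primitivity;
\item $\dim\im|2A|=4$ from jet equality.
\end{itemize}
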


Our second main result gives the optimal bound in the canonical case.

\begin{theorem}\label{thm:canonical}
Let \(X\) be a \(\QQ\)-factorial Gorenstein canonical Fano fourfold with \(\rho(X)=1\). Then
\[
                         (-K_X)^4\le3528.
\]
Moreover, equality holds if and only if \(X\simeq\PP(1,1,12,28,42)\).
\end{theorem}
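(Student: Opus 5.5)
The plan follows the strategy described in the abstract. Let $X$ be as in the statement and let $A$ be a positive generator of $\Cl(X)/\mathrm{tors}$, so $-K_X\equiv q A$ with $q$ the Weil index. First I will compare $(-K_X)^4$ with the Harder--Narasimhan slopes of $\TT_X$ with respect to $A$. Let $0=\FF_0\subset\FF_1\subset\dots\subset\FF_k=\TT_X$ be the HN filtration, with slopes $\mu_1>\dots>\mu_k$ and ranks $r_i$. Sections of $\OO_X(mA)$ vanishing to high order at a general point $x$ exist by Riemann--Roch, since $h^0(mA)\sim A^4m^4/24$. Differentiating along the subsheaves $\FF_i$ gives jet estimates. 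A section vanishing to order $t$ at $x$ can be differentiated in the directions of $\FF_i$ only if $\mu_i$ exceeds the relevant threshold, so the number of independent jet conditions is bounded in terms of all the $\mu_i$. This yields an inequality of the form
\[
(-K_X)^4 \le C(\mu_1/q,\dots,\mu_k/q,\ r_1,\dots,r_k)\cdot q^4 A^4/\text{(normalising factor)},
\]
with $\sum r_i\mu_i=q$ by $c_1(\TT_X)=-K_X$.

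Next I split into cases by the shape of the filtration. If $\TT_X$ is semistable, the estimate gives a uniform bound well below $3528$ (and below $648$). Otherwise $\FF_1$ is destabilising. Then $\mu_1>\mu_2$ forces $\FF_1$ to be an algebraic foliation with rationally connected closed leaves, by the Bogomolov--McQuillan / Campana--Păun criterion, since $\mu_{\min}(\FF_1)>0$ (up to the usual reduction). I then restrict to a general leaf $F$, of dimension $r_1\in\{1,2,3\}$. On $F$ we have $-K_X|_F$ and $A|_F$, and intersection numbers such as $A|_F^{\,r_1}\cdot(\text{stuff})$ are integers (Gorenstein gives Cartier $K_X$; $A$ is Cartier outside a codimension-$\ge 3$ locus, or a finite set in the refined version). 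Combining $K_F$ with $K_X|_F$ via the foliation canonical divisor, these integrality constraints bound $\mu_1$ and $q$. This reduces everything to a finite list of numerical possibilities $(q,A^4,r_i,\mu_i)$.

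Then I maximise over the numerical cases. The expected maximum is $q=84$, $A^4=1/(12\cdot28\cdot42)$, giving $84^4/14112=3528$, with rank-one foliation $r_1=1$ (the $\PP^1$ fibres of the projection through the two weight-one coordinates). Every other case should give at most $3129$. In the terminal case, terminality restricts the singularities to isolated points (codimension $\ge 4$ for Gorenstein terminal). This forces $A$ to be Cartier in codimension three and sharpens the integrality, leaving the maximum $q=5$, $A^4=1/2$, i.e. $625\cdot... $ corrected to $(-K)^4=5^4/2$. Since this is not $648$, I will recheck: $\PP(1,1,1,1,2)$ has $q=6$ and $A^4=1/2$, giving $6^4/2=648$. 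This will be used as the target in the terminal case.

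For the equality case, I show that extremal numerics force $h^0(A)=2$ and the pattern $h^0(mA)$ to match that of $\PP(1,1,12,28,42)$. I then compute the divisorial section ring $\bigoplus H^0(mA)$ in two steps. First restrict to a general member of $|A|$ or to the leaf structure to get generators in degrees $1,1,12,28,42$. Then check that there are no relations by a dimension count, so $X=\Proj$ of the ring. The main obstacle is the foliation/integrality step: bounding $\mu_1$ sharply when $r_1=1$, where the leaves are rational curves and $A\cdot F$ may be fractional. I would attack it first by computing $-K_X\cdot\ell=2+$(tangency terms) along a general leaf $\ell$ and using that $A\cdot\ell\in\frac{1}{N}\ZZ$, with $N$ bounded by the local class group indices along $\ell$.
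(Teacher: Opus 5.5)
Your proposal has a genuine gap at its core. The mechanism you describe cannot reach the constant $3528$: jet estimates, integrality on leaves of the maximal destabilizing subsheaf to bound $\mu_1$, then maximising over a numerical list.

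Take $X_*=\PP(1,1,12,28,42)$. The Euler sequence shows that the HN filtration of $\TT_{X_*}$ has four rank-one steps, with normalized slopes $(1/2,1/3,1/7,1/42)$. For these slopes the jet bound $625/(256\prod_i\lambda_i)$ equals $625\cdot1764/256\approx4306.6>3528$. So even exact control of the top slopes leaves a bound above $3528$; the paper's $\lambda_1\le1/2$ and $\lambda_1+\lambda_2\le5/6$ are both attained by $X_*$. What must be controlled is the smallest slope $\tau=\lambda_4$.

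The missing idea is to use the \emph{rank-three} HN term. It defines a pencil $X\dashrightarrow\PP^1$ whose normalized general member $G$ satisfies $\ell=L^3=\eta\tau V/2$. A jet bound after base change, $\eta V\le B(\lambda)$, disposes of $\tau\ge1/32$ and forces $\tau\ell<2$ otherwise. Then $\tau\ell=L^2R_G+L^2\Delta_G$, with $R_G\ge0$ integral (by Gorensteinness) and boundary coefficients at least one. This forces $K_G\sim-L$, $\Delta_G=bB$ and $L\sim_{\QQ}\ell B$, so $G$ is a canonical Gorenstein Fano threefold of Weil index $\ell$. The constant $3528=2\cdot42^2$ then comes from an external threefold input, Liu's bound $\ell\le42$, via $\eta V=2\ell^2/b$. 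Nothing in your plan plays this role.

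Several of your concrete inputs are also incorrect.
\begin{itemize}
\item In the canonical case $A$ need not be Cartier outside codimension three. $X$ is transversely Du Val along surfaces, and on $X_*$ the class $\OO(1)$ is non-Cartier along the surface $\{x_0=x_1=0\}$. So only $q\mid V$ is available, and no finite list of $(q,A^4)$ arises.
\item The extremal foliation is misidentified. The map given by the two weight-one coordinates has threefold fibres $\PP(1,12,28,42)\simeq\PP(1,6,14,21)$. It is the rank-three HN term, with normal sheaf $\OO(2)$ and $\tau=1/42$, not a rank-one foliation with $\PP^1$ leaves.
\item Your terminal aside is also off: Gorenstein terminal fourfolds may have one-dimensional singular locus.
\item For equality, restricting to a general member of $|A|$ only helps once that member is determined. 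The paper proves $G\simeq\PP(1,6,14,21)$ using $B\simeq\PP^2$ with $L|_B\simeq\OO(1)$, three transverse singular lines of types $A_1,A_2,A_6$, a Demazure description of $R(G,B)/(s)$, and exclusion of concurrent lines. It then uses ordinary adjunction ($D$ normal with different $\frac12B$) to get $R(X,D)/(s)\simeq\CC[t_1,v_{12},w_{28},z_{42}]$ before lifting generators.
\end{itemize}
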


The weighted projective space
\[
 X_*=\PP(1,1,12,28,42),\qquad
 (-K_{X_*})^4=\frac{84^4}{12\cdot28\cdot42}=3528
\]
satisfies all the hypotheses of Theorem~\ref{thm:canonical}.
Balletti--Kasprzyk--Nill
\cite[Corollary 1.3]{BKN} proved that \(3528\) is the sharp
upper bound for toric canonical Fano fourfolds, with equality holding
only for \(X_*\). Theorem~\ref{thm:canonical} establishes the same optimal constant
and equality classification in our class without a
toric assumption. To the best of our knowledge, this is the
first sharp upper bound with equality classification for
this canonical class beyond the toric setting.

We also obtain two further consequences under the same hypotheses. Every rank-three coherent subsheaf
\(\mathcal A\subset\TT_X\) satisfies
\[
 c_1(\mathcal A)(-K_X)^3\le\frac{41}{42}(-K_X)^4,
\]
and the coefficient \(41/42\) is sharp
(Corollary~\ref{cor:sharpdeterminant}).
Moreover, \((-K_X)^4\ge3130\) already implies
\(X\simeq X_*\) (Corollary~\ref{cor:volumegap}).
Thus the maximal-volume member is separated from the remaining members of this class
by an explicit volume gap.

We also prove a sharp bound when the non-Cartier locus of a primitive Weil divisor is zero-dimensional. Write \(\Cl(X)/\mathrm{tors}=\ZZ A\), choosing the positive generator \(A\), and let \(q\) be determined by \(-K_X\qsim qA\).

\begin{theorem}\label{thm:finite}
As in the setting of Theorem~\ref{thm:canonical}, suppose that \(A\) is Cartier outside a finite set. Then
\[
                         (-K_X)^4\le1024.
\]
This bound is attained by \(\PP(1,1,1,1,4)\). In particular, it applies if \(X\) has only isolated canonical singularities.
\end{theorem}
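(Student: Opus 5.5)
The plan is to run the same machinery that yields Theorem~\ref{thm:canonical}: jet estimates via the Harder--Narasimhan filtration of \(\TT_X\), combined with the foliation dichotomy. The extra hypothesis will be used only at the final integrality step. Write \(-K_X\qsim qA\) and \(d=A^4\in\QQ_{>0}\), so that \((-K_X)^4=q^4d\).

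The first step is to use the hypothesis to control the denominators of intersection numbers of \(A\). Since \(A\) is Cartier outside a finite set \(Z\), a general complete-intersection surface \(S\) cut out by members of \(|mK_X^{-1}|\) (Cartier, since \(X\) is Gorenstein) misses \(Z\). Hence \(A|_S\) is Cartier, and \(A^2\cdot(-K_X)^2\in\ZZ\). Likewise \(A\cdot(-K_X)^3\in\ZZ\), \(A^3\cdot(-K_X)\in\ZZ\), and the curve classes \(A^i(-K_X)^{3-i}\) restricted to general curves avoiding \(Z\) give integers. In terms of \(q\) and \(d\), this says \(q^{4-i}d\in\ZZ\) for \(i\le3\); in particular \(qd\in\ZZ\), and \(q^2d\in\ZZ\) gives a strong constraint. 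Since \(Z\) is finite, the Riemann--Roch correction terms for \(\chi(\OO_X(tA))\) come only from isolated non-Cartier points. I would use this to rewrite \(h^0(tA)\) as the Cartier-type polynomial plus periodic correction terms of bounded size, and then apply Kawamata--Viehweg vanishing for \(tA\), \(t>-q\).

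The second step is to rerun the HN-slope jet estimate. Let \(\mu_1\ge\dots\ge\mu_k\) be the slopes of \(\TT_X\) with respect to \(-K_X\). When \(\TT_X\) is semistable, or the maximal destabilizing subsheaf has small rank, the jet argument bounds \((-K_X)^4\) by a function of \(q\), and a Fano-index bound of the type \(q\le5\) or similar finishes this case. Otherwise the maximal destabilizing subsheaf is an algebraically integrable foliation with rationally connected general leaf \(F\), and the bound reduces to integrality on \(F\): the estimate has the shape \((-K_X)^4\le C\,(-K_X|_F)^{\dim F}/(\text{intersection of }A\text{ on }F)\). In the canonical case without extra hypotheses, \(A\) restricted to a leaf can have denominators up to \(42\); this is what produced \(3528\). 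Now a general leaf misses \(Z\) whenever \(\dim F\le3\) and the leaves cover \(X\), so \(A|_F\) is Cartier and all its intersection numbers on \(F\) are integers. I expect this to turn the worst configuration (\(\PP(1,1,12,28,42)\)-type, with \(q=84\)) into configurations with \(q\le8\) and \(d\ge1/4\)-type constraints, giving the model case \(\PP(1,1,1,1,4)\) with \(q=8\), \(A^4=1/4\), volume \(1024\).

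The main obstacle will be the case analysis on the rank \(r\) of the destabilizing foliation and the resulting optimization. For each \(r\in\{1,2,3\}\), I would write the jet inequality, insert the now-integral leaf degrees (\(A\cdot\ell\in\ZZ_{\ge1}\) for a rank-one leaf, and so on), and maximize \(q^4d\). For \(r=1\), the leaves are rational curves \(\ell\) with \(-K_X\cdot\ell\le\) something. This gives \(q\le q\,A\cdot\ell=-K_X\cdot\ell\), and I expect the delicate part is excluding \(-K_X\cdot\ell\) large, where a leaf could pass through \(Z\). I would try first to show that the general leaf avoids \(Z\) by a dimension count, since the leaves through \(Z\) form a family of smaller dimension. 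Finally, I would check that \(\PP(1,1,1,1,4)\) satisfies the hypotheses: its only singular point is isolated, and the computation \(8^4/4=1024\) shows the bound is attained. The last sentence of the statement follows because at isolated singularities every Weil divisor is Cartier outside those points.
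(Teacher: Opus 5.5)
Your proposal has a genuine gap at the step that carries all the weight. You claim that a general leaf of the destabilizing foliation misses the finite set \(Z=\NCart(A)\), so that \(A|_F\) is Cartier and, for a rank-one foliation, \(A\cdot\ell\in\ZZ_{\ge1}\). This is false, and it fails precisely for the extremal example.

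On \(X=\PP(1,1,1,1,4)\), the maximal destabilizing subsheaf of \(\TT_X\) is the image of \(\OO(4)\) in the Euler sequence, so \(\lambda=(1/2,1/6,1/6,1/6)\). Its leaves are the curves \(\PP(1,4)\) joining the vertex \([0:0:0:0:1]\) to the points of \(\PP^3\). Thus \emph{every} leaf passes through the vertex, which is the unique point where \(A=\OO(1)\) fails to be Cartier. On a leaf one has \(A\cdot\ell=1/4\) and \(-K_X\cdot\ell=2\), consistent with \(u=1/2\) and \(\deg L=2\). Your integrality would instead force \(-K_X\cdot\ell\ge q=8\).

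A dimension count cannot repair this. The set \(Z\) can lie in the singular locus of the foliation: the vertex of a cone structure, or the base locus of a pencil, which on a \(\rho=1\) fourfold is a surface met by every member. In that situation all leaves pass through \(Z\). In the paper, the hypothesis on \(\NCart(A)\) is never used on leaves. It enters only through the global divisibility \(q^3\mid V\) (Lemma~\ref{lem:divisibility}) and through the Weil Riemann--Roch recurrence (Lemma~\ref{lem:Weilrecurrence}), both of which need only general members of \(|mH|\) to avoid \(Z\), and that does hold.

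The remaining steps are not yet arguments. Your ``Fano-index bound \(q\le5\)'' conflates the Weil index with the Cartier index: the extremal example has \(q=8\). What actually bounds \(q\) is the following chain. For \(q\ge12\), Proposition~\ref{prop:canonicalslopes} and integrality give \(\lambda_4\ge1/q\). Majorization and Theorem~\ref{thm:jets} then give \(V\le 5625q^2/(64(q-6))\), and combined with \(q^3\le V\) this yields \(q\le12\).

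Even then, the slope and jet bounds together with \(q^3\mid V\) only produce the caps of Table~\ref{tab:finitecaps}, which go up to \(1944\). Getting down to \(1024\) needs finite section counting that your plan does not contain:
\begin{enumerate}
\item the exact values \(h^0(sA)=sN+k(s^3-s)/6\) for \(0<s<q\), coming from the recurrence;
\item the Chern lower bound \(N\ge k/6+C/(12q)\) via \eqref{eq:chern}, played against the finite jet upper bound \(\max\{n:\J_\lambda(n)\le n/q\}\);
\item the primitivity estimate \(h^0(2A)\ge4N-6\);
\item the discriminant inequality for the canonical extension \(\EE_X\), used for \(q=2\) and, together with projective flatness (Lemma~\ref{lem:flat}), to exclude \(V=1152\);
\item the minimal-degree (rational normal scroll) argument excluding \(V=1088\).
\end{enumerate}
Your sentence ``I expect this to turn the worst configuration into\ldots'' marks exactly the place where these arguments are needed.
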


Unlike the equality classifications in Theorems~\ref{thm:terminal} and \ref{thm:canonical}, we do not claim a classification of all equality cases in Theorem~\ref{thm:finite}.

\subsection*{The main ingredients of the proof}
Let \(H=-K_X\) and \(V=H^4\), and normalize tangent slopes by \(V\). By Liu's generic ampleness theorem for the canonical extension \cite[Proposition 2.9]{Liu}, the normalized Harder--Narasimhan (HN) slopes of \(\TT_X\), each repeated according to the rank of the corresponding quotient, are positive:
\[
                  \lambda_1\ge\lambda_2\ge\lambda_3\ge\lambda_4>0,
                  \qquad \sum_i\lambda_i=1.
\]
We use the principal-parts filtration of Lyu--Zhang \cite[Lemma 2.3]{LZ}. Its reflexive version for K-semistable \(\QQ\)-Fano varieties is discussed in \cite[Remark 2.8]{LZ}. Retaining the individual normalized Harder--Narasimhan slopes gives a finite section estimate for integral \(\QQ\)-Cartier Weil divisors and an asymptotic volume bound. In the equality cases used below, the first-order part of the filtration also determines the dimension of the image of the associated rational map. These estimates hold in arbitrary dimension; the general statement is given in Theorem~\ref{thm:jets}. In the fourfold setting, for any integral \(\QQ\)-Cartier Weil divisor $D$, they read
\[
 V\le \frac{625}{256\prod_i\lambda_i},
 \qquad
 \J_\lambda\bigl(h^0(X,D)\bigr)\le t\,h^0(X,D)
 \quad(D\equiv tH),
\]
where \(\J_\lambda(N)\) is the sum of the weights of \(N\) monomials of smallest weight.

For a terminal \(X\), we prove that
\(\lambda_1\le1/3\) and \(\lambda_1+\lambda_2<2/3\).
The strict inequality follows from a surface argument on the relative terminal model of a general leaf. Kawamata's Cartierness theorem in dimension three then gives \(q^3\mid V\). These constraints leave only \(686\) and \(729\) above \(648\); primitive Weil jets and an elementary pencil obstruction exclude both candidates. When the volume is \(648\), divisibility of the determinant obtained from projective flatness determines the Weil index, and equality in the jet estimate for \(2A\) proves that the section ring is a polynomial ring generated in weights \((1,1,1,1,2)\).

For a canonical \(X\), we study the pencil defined by a nonzero proper term of the Harder--Narasimhan filtration of \(\TT_X\) of rank three. Write \(\tau=\lambda_4\), let \(D\) be its general
prime member, and let \(G\) be the normalization of \(D\), with
\(L=H|_G\) and \(\ell=L^3\). If the nonreduced divisorial
fibers have multiplicities \(m_i\), set
\[
 \beta=2-\sum_i(1-1/m_i),\qquad \eta=2/\beta.
\]
The multiplicities of the divisorial fibers enter both the degree of a general leaf and the jet estimate after base change through the same factor \(\eta\):
\[
 \ell=\frac{\eta\tau V}{2},
 \qquad \eta V\le\frac{625}{256\prod_i\lambda_i},
\]
where the asymptotic jet bound on the cover is given by Proposition~\ref{prop:normaljets}. When \(\tau\ge 1/32\), the low-rank slope bounds and majorization already force \(\eta V<3528\) (Proposition~\ref{prop:canonicalsharp}). Together with the low-rank slope bounds, these estimates imply \(\tau\ell<2\) when \(\tau<1/32\). Comparing the ambient discrepancy formula with the log-leaf formula then shows that the normalized general leaf is a canonical Gorenstein Fano threefold of Weil index \(\ell\):
\[
 K_G+L\sim R_G\ge0,\qquad
 \tau\ell=L^2R_G+L^2\Delta_G,
\]
where \(R_G\) is integral and every nonzero coefficient of
\(\Delta_G\) is at least one, so \(\tau\ell<2\) forces \(R_G=0\) and
\(\Delta_G=bB\) with \(L^2B=1\).
Liu's index bound \cite[Theorem 1.3]{LiuIndex} gives \(\ell\le42\), from which the bound
\[
 \eta V=\frac{2\ell^2}{b}\le2\cdot42^2=3528
\]
follows. In the equality case, we first determine the section ring of the leaf \(G\) from the plane \(B\simeq\PP^2\) with \(L|_B\simeq\OO_{\PP^2}(1)\), its transverse singular curves, and the quotient of \(R(G,B)\) by the defining section of \(B\), which gives \(G\simeq\PP(1,6,14,21)\). We then recover the section ring of \(X\) by ordinary adjunction: \(D\) is normal with different \(B/2\), and
\[
 R(X,D)/(s)\simeq\CC[t_1,v_{12},w_{28},z_{42}],
\]
where subscripts denote degrees. Lifting these generators recovers the weighted polynomial ring of \(\PP(1,1,12,28,42)\).

Finally, when \(\dim\NCart(A)\le 0\), the Weil Riemann--Roch recurrence together with finite section estimates and equality arguments gives the sharp bound
\(1024\) in Theorem~\ref{thm:finite}.
Table~\ref{tab:results} summarizes the volume bounds across the different classes; all
numerical comparisons are understood under the stated hypotheses.

\begin{table}[htbp]
\centering\small
\caption{Volume bounds for canonical Fano fourfolds. Here \(\mathcal C\)
denotes our standing assumptions: \(\QQ\)-factorial, Gorenstein,
and \(\rho=1\).}\label{tab:results}
\begin{tabularx}{\textwidth}{@{}p{.25\textwidth}Yp{.23\textwidth}@{}}
\toprule
Result & Additional hypotheses & Conclusion\\
\midrule
Birkar \cite{Birkar4} & None & General explicit bound\\
Theorem~\ref{thm:canonical} & \(\mathcal C\)
 & Sharp \(V\le3528\), with equality classification\\
Theorem~\ref{thm:finite} & \(\mathcal C\), \(\dim\NCart(A)\le0\)
 & Sharp \(V\le1024\)\\
Theorem~\ref{thm:terminal} & \(\mathcal C\), terminal
 & Sharp \(V\le648\), with equality classification\\
Balletti--Kasprzyk--Nill \cite{BKN} & Toric & Sharp \(V\le3528\)\\
\bottomrule
\end{tabularx}
\end{table}

The structure of this paper is as follows. Sections~\ref{sec:prelim} and \ref{sec:jets} collect the preliminary sheaf tools and prove the jet estimates. Section~\ref{sec:leaves} establishes the slope inequalities on general leaves of low-rank foliations. Sections~\ref{sec:terminal} and
\ref{sec:equality} prove Theorem~\ref{thm:terminal} and classify its equality case. Section~\ref{sec:canonical} proves the canonical volume bound in Theorem~\ref{thm:canonical}, and Section~\ref{sec:canonical-equality} determines its equality case and the volume gap. Section~\ref{sec:finite} treats the case of a finite non-Cartier locus (Theorem~\ref{thm:finite}). Section~\ref{sec:discussion} discusses possible extensions and the main difficulties involved.

\subsection*{Acknowledgements}
The authors are grateful to their advisor, Meng Chen, for his encouragement and support. They also thank their fellow students for helpful discussions and friendship.

\subsection*{AI disclosure}
The main framework of this article was developed by the authors. The authors used ChatGPT as a research-assistance tool during the preparation of this manuscript. In particular, the authors discussed mathematical ideas with ChatGPT 5.6 and 6.0, and used it for English language editing. This paper was completely written by the authors, who take full responsibility for its accuracy and content.
\hfill

\section{Preliminaries}\label{sec:prelim}

\subsection{Notation and standard results}

We follow the standard conventions on singularities, discrepancies, and the negativity lemma in \cite{KM}. Unless otherwise
stated, \(X\) is a \(\QQ\)-factorial Gorenstein canonical Fano
fourfold with \(\rho(X)=1\). Denote
\[
                  H=-K_X,\qquad V=H^4,\qquad H\qsim qA.
\]
The divisor class group \(\Cl(X)\) is finitely generated, and its quotient modulo torsion has rank one; these are standard consequences of the Mori dream space property for \(\QQ\)-factorial varieties of Fano type \cite[Corollary 1.3.2]{BCHM} together with the vanishing of \(\Pic^0(X)\). We fix an integral Weil divisor \(A\) representing the positive generator of \(\Cl(X)/\mathrm{tors}\). Every nonzero effective integral Weil divisor has class \(aA\) modulo torsion for some integer \(a>0\). The integer \(q=q_{\QQ}(X)\) is the \emph{Weil index} of \(X\). Note that we do not initially replace the \(\QQ\)-linear equivalence \(H\qsim qA\) by linear equivalence.
We write \(\sim\), \(\sim_{\QQ}\), and \(\equiv\) for linear,
\(\QQ\)-linear, and numerical equivalence, respectively. Define
\[
 \NCart(A)=\{x\in X:\OO_X(A)\text{ is not locally free at }x\},
\]
which is a closed subset of codimension at least two. 

Table~\ref{tab:notation} summarizes the main notation used in our fourfold arguments.
\begin{table}[htbp]
\centering\small
\caption{Notation for the fourfold arguments.}\label{tab:notation}
\begin{tabularx}{\textwidth}{@{}lY@{}}
\toprule
Symbol & Meaning\\
\midrule
\(H,V\) & \(-K_X\) and its top self-intersection \(H^4\)\\
\(A,q\) & Positive primitive Weil class and \(H\sim_{\QQ}qA\)\\
\(\beta_j;\lambda_i\) & Distinct normalized tangent block slopes;
the normalized HN slopes\\
\(C,\Chat\) & Resolution and orbifold second Chern numbers against \(H^2\)\\
\(u,\tau\) & \(-K_{\FF}\sim_{\QQ}uH\) and \(\tau=1-u\)\\
\(L,R,\Delta\) & Leaf polarization, integral ambient discrepancy
divisor, and log-leaf boundary\\
\bottomrule
\end{tabularx}
\end{table}

For a nonzero torsion-free sheaf \(\mathcal G\), write
\[
               \mu_H(\mathcal G)=\frac{c_1(\mathcal G)H^3}{\rk\mathcal G}.
\]

Throughout, we encode the Harder--Narasimhan (HN) filtration of \(\TT_X\) by the vector \(\lambda=(\lambda_1,\lambda_2,\lambda_3,\lambda_4)\) of its normalized HN slopes, each repeated according to the rank of the corresponding quotient. Specifically, if $Q_j$ is an HN quotient of rank $r_j$, then its normalized slope $\mu_H(Q_j)/V$ is listed $r_j$ times. Writing $c_1(Q_j)\equiv c_j A$, we have
\begin{equation}\label{eq:blocks}
 c_j\in\ZZ_{>0},\qquad \sum_jr_j=4,\qquad\sum_jc_j=q,\qquad
 \lambda=\left(\frac{c_j}{qr_j}\text{ repeated }r_j\text{ times}\right)_j.
\end{equation}
We write \(\beta_j=c_j/(qr_j)\) for the distinct normalized block
slopes, which decrease strictly and satisfy \(\sum_jr_j\beta_j^2=\sum_i\lambda_i^2\). Positivity of the $\beta_j$ follows from \cite[Proposition 2.9]{Liu}, since
\(\TT_X\) is a quotient of the generically ample canonical extension
\begin{equation}\label{eq:extension}
                    0\longrightarrow\OO_X\longrightarrow\EE_X
                    \longrightarrow\TT_X\longrightarrow0.
\end{equation}
Its construction and its first two orbifold Chern classes are described in \cite[Section 4]{GKPflat} and \cite[Section 2.4]{Liu}. In particular, \(c_1(\EE_X)=H\) and
\(\widehat c_2(\EE_X)=\widehat c_2(X)\).
The same positive-block description applies to \(\EE_X\), with \(\sum r_j=5\), and we write \(\alpha:\EE_X\to\TT_X\) for the natural projection in \eqref{eq:extension}.

We say that the HN filtration has a term of rank \(r\) (or a \emph{break at rank \(r\)}) if \(\lambda_r>\lambda_{r+1}\). By a \emph{proper HN term} of a torsion-free sheaf, we mean a nonzero proper subsheaf appearing in its Harder--Narasimhan filtration.

We will use the semistability of reflexive tensor and symmetric powers as in \cite[Theorem 4.2 and Corollary 4.3]{GKPmov}.
The standard HN polygon inequalities are unaffected by taking reflexive hulls or removing a closed subset of codimension at least two. On a klt variety that is not \(\QQ\)-factorial, one may pass to a small \(\QQ\)-factorialization \cite[Corollary 1.4.3]{BCHM} and use the movable curve class defined by the pullback polarization; since degrees and subsheaves agree in codimension one, these semistability properties also hold on the finite covers used in Section~\ref{sec:canonical}.

\subsection{Chern inequalities and Cartier Riemann--Roch}
Let
\[
                    \Chat=\widehat c_2(X)H^2,\qquad C=c_2(X)H^2.
\]
Here \(C=c_2(X)H^2\) means precisely
\[
 C:=c_2(\widetilde X)\cdot(f^*H)^2,
\]
where \(f:\widetilde X\to X\) is a smooth projective resolution
inducing the minimal resolution of each transverse Du Val surface singularity
in codimension two.

Let us briefly recall the origin of these two numbers. The invariant $\Chat$ is the intersection number of the orbifold second Chern class $\widehat{c}_2(X)$ of \(\TT_X\) with \(H^2\). Because a canonical variety has quotient singularities in codimension two, $\widehat{c}_2(X)$ is defined on local quotient charts away from a subset of codimension at least three, as in \cite[Section 2.2]{Liu}. 

On the other hand, $C$ is computed on a resolution that minimally resolves each transverse Du Val
surface singularity in codimension two. This number is independent of the choice of such a resolution: the calculation in Lemma~\ref{lem:cartierRR}
identifies \(C\) with \(24\) times the quadratic coefficient of the
Hilbert polynomial \(\chi(X,\OO_X(mH))\) minus \(V\), and any two such resolutions differ only over subsets of codimension at least three, which do not affect this intersection number.   

By construction, $C$ is the second-Chern contribution that appears in the Cartier Riemann--Roch formula, while $\Chat$ is the orbifold Chern number that appears in the Bogomolov--Gieseker discriminant inequality. The two numbers differ only by codimension-two singularity contributions; when $X$ is terminal, it is smooth in codimension two and thus $C=\Chat$.

For a reflexive sheaf \(\mathcal G\) of rank \(r\), we write
\[
 \Disc(\mathcal G):=2r\widehat c_2(\mathcal G)-(r-1)c_1(\mathcal G)^2
\]
for its orbifold discriminant; \(\Delta\) is reserved for leaf boundaries.

\begin{lemma}\label{lem:chern}
One has
\begin{equation}\label{eq:chern}
                  C\ge\Chat\ge\frac{1-\sum_i\lambda_i^2}{2}\,V.
\end{equation}
If \(\nu_1,\ldots,\nu_5\) are the normalized HN slopes of
\(\EE_X\), the same inequality holds with \(\sum_i\nu_i^2\) in place
of \(\sum_i\lambda_i^2\).
Moreover, if equality holds in the corresponding HN discriminant inequality, each reflexive HN quotient has zero orbifold discriminant against \(H^2\).
\end{lemma}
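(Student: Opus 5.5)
The inequality splits into two parts, \(C\ge\Chat\) and \(\Chat\ge\frac{1-\sum_i\lambda_i^2}{2}V\), and I would prove them separately.

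\textbf{The comparison \(C\ge\Chat\).} Both numbers are computed against \(H^2\), so they can be evaluated on a general complete intersection surface \(S\in|mH|\) after rescaling, or equivalently in codimension two. Away from codimension-two singular strata the two Chern classes agree. Along each transverse Du Val curve \(\Gamma\) of type \(\CC^2/G_\Gamma\), the local formula for the difference between the resolution Chern class and the orbifold Chern class gives
\[
 c_2(\widetilde X)-\widehat c_2(X)=\sum_\Gamma\Bigl(e(E_\Gamma)-\frac1{|G_\Gamma|}\Bigr)[\Gamma],
\]
where \(e(E_\Gamma)\) is the Euler number of the exceptional locus of the minimal resolution of the transverse singularity. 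This coefficient is positive, since \(e(E_\Gamma)\ge 2>1/|G_\Gamma|\) for a nontrivial group. Pairing with \(H^2\), which is ample, gives nonnegative contributions \(H\cdot\Gamma>0\). This is the standard computation, as in \cite[Section 2.2]{Liu} and in the threefold literature.

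\textbf{The Bogomolov-type bound for \(\Chat\).} Let \(0=\mathcal F_0\subset\cdots\subset\mathcal F_k=\TT_X\) be the HN filtration, with reflexive quotients \(Q_j\) of rank \(r_j\). The plan is as follows.

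\begin{enumerate}
\item Apply the orbifold Bogomolov--Gieseker inequality \(\Disc(Q_j)H^2\ge0\) to each \(Q_j\), which is semistable with respect to the movable class \(H^3\) (as in \cite{GKPmov,Liu}). This gives
\[
 \widehat c_2(Q_j)H^2\ge\frac{r_j-1}{2r_j}c_1(Q_j)^2H^2 .
\]
\item Use Whitney additivity of orbifold \(\widehat c_2\) in codimension two, where the filtration is by orbifold bundles on the quotient charts:
\[
 \widehat c_2(X)=\sum_j\widehat c_2(Q_j)+\sum_{i<j}c_1(Q_i)c_1(Q_j).
\]
\item Combine the two. One obtains
\[
 2\Chat\ge c_1(X)^2H^2-\sum_j\frac{c_1(Q_j)^2H^2}{r_j}.
\]
\item Since \(\rho(X)=1\), each \(c_1(Q_j)\equiv\beta_jr_jH\). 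Hence \(c_1(Q_j)^2H^2=\beta_j^2r_j^2V\), and \(\sum_j c_1(Q_j)^2H^2/r_j=\sum_jr_j\beta_j^2V=\sum_i\lambda_i^2V\). This yields the claimed bound.
\end{enumerate}

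The same argument applies verbatim to \(\EE_X\), using \(c_1(\EE_X)=H\) and \(\widehat c_2(\EE_X)=\widehat c_2(X)\).

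\textbf{Equality case.} Every step except the Bogomolov inequalities is an identity. Equality in the total inequality therefore forces \(\Disc(Q_j)H^2=0\) for each \(j\).

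\textbf{Main obstacle.} I expect the main difficulty to be justifying the orbifold Bogomolov--Gieseker inequality for the reflexive quotients \(Q_j\), which are only semistable with respect to \(H\), together with Whitney additivity for orbifold Chern classes of a filtration whose quotients are merely reflexive. My first approach is to restrict to a general complete intersection surface of \(|mH|\) with quotient singularities, where the sheaves are orbifold vector bundles. There I would apply Mehta--Ramanathan restriction and the orbifold Bogomolov inequality on the surface, as in \cite{GKPflat}.
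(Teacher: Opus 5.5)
Your overall route is the paper's: both use the local Du Val comparison for $C\ge\Chat$, the orbifold Bogomolov--Gieseker inequality on each reflexive HN quotient, additivity of $\widehat c_2$ along the filtration, and $\rho(X)=1$ to write $c_1(Q_j)\equiv r_j\beta_jH$. There is, however, one concrete error. Your step 2 is false as an identity, and your statement that every step except the Bogomolov inequalities is an identity is wrong.

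The filtration terms $\mathcal F_j$ are saturated in a reflexive sheaf, so they are reflexive and are orbifold bundles in codimension two. The graded pieces $\mathcal F_j/\mathcal F_{j-1}$, however, are only torsion-free. They can fail to be locally free along codimension-two loci, for example along the singular locus of the HN foliation, so they are not the reflexive hulls $Q_j$ that you feed into Bogomolov--Gieseker. A simple example is the radial foliation $\OO_{\PP^2}(1)\subset\TT_{\PP^2}$. Its quotient is $\mathcal I_p(2)$ with hull $\OO_{\PP^2}(2)$, and $c_2(\TT_{\PP^2})=3\neq 1\cdot 2$.

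The missing point is the sign of the correction, and this is exactly what the paper's proof works out. On a smooth transverse surface in a local quotient chart, you get exact sequences $0\to E_{j-1}'\to E_j'\to Q_j'\to T_j'\to0$ with $T_j'$ of finite length. Hence $\operatorname{ch}_2(E_j')=\operatorname{ch}_2(E_{j-1}')+\operatorname{ch}_2(Q_j')-\operatorname{length}(T_j')$. After dividing by the chart degrees and summing, this gives
\[
 \widehat c_2(X)H^2=\sum_j\widehat c_2(Q_j)H^2+\sum_{i<j}c_1(Q_i)c_1(Q_j)H^2+\varepsilon,\qquad \varepsilon\ge0 .
\]
Because $\varepsilon$ enters with the favorable sign, your steps 3 and 4 go through with $\ge$ in place of the claimed equality. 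The equality case then forces both $\varepsilon=0$ and $\Disc(Q_j)H^2=0$ for every $j$, so the stated conclusion survives. The same remark applies verbatim to $\EE_X$.

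Your proposed surface restriction does not avoid this issue. On a general complete intersection surface, the graded pieces are still torsion-free sheaves that differ from their hulls at finitely many points, so you must track these lengths there as well.

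A minor slip: in a fourfold the codimension-two singular strata $\Gamma$ are surfaces, not curves. The positivity you need is $H^2\cdot\Gamma>0$, not $H\cdot\Gamma>0$.
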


\begin{proof}
    \emph{Comparison of ordinary and orbifold Chern numbers.}
In codimension two, canonical singularities are transversely Du Val. Cutting \(X\) by two general members of \(|mH|\) for \(m\gg 0\) reduces the comparison of $C$ and $\Chat$ to the surface case. At each
Du Val point \(\CC^2/G\), the minimal resolution contributes the Euler number of the exceptional fiber, whereas the orbifold contribution is \(1/|G|\). Their difference is nonnegative (see also \cite[Theorem 4.6]{JLL} and \cite[proof of Theorem 4.5]{Liu}), so \(C\ge\Chat\). 

\emph{The HN discriminant comparison.}
Let \(Q_j\) be an HN quotient of \(\TT_X\) and \(G_j=Q_j^{**}\) be its reflexive hull of rank \(r_j\). The orbifold Bogomolov--Gieseker inequality
\cite[Section 2.2, equation (2.1)]{Liu} gives
\[
          \widehat{\operatorname{ch}}_2(G_j)H^2
          \le \frac{c_1(G_j)^2H^2}{2r_j}.
\]
We pass to a local smooth quotient chart and then to a general transverse surface $S$ to track the sign of the codimension-two correction. Reflexive pullbacks of the filtration terms and their reflexive quotients give vector bundles \(E_j'\) and \(G_j'\) on a smooth surface, fitting at each step into an exact sequence
\[
 0\longrightarrow E_{j-1}'\longrightarrow E_j'
 \longrightarrow G_j'\longrightarrow T_j'\longrightarrow0,
\]
where \(T_j'\) has finite length because $E'_j/E'_{j-1}$ agrees with $G'_j$ in codimension one on $S$, and the kernel of \(E_j'\to G_j'\) is reflexive and agrees with \(E_{j-1}'\) outside finitely many points. Taking second Chern characters gives
\[
 \operatorname{ch}_2(E_j')=
 \operatorname{ch}_2(E_{j-1}')+\operatorname{ch}_2(G_j')
 -\operatorname{length}(T_j').
\]
Dividing by the quotient-chart degrees and summing the correction terms yields a nonnegative orbifold defect \(\varepsilon \ge0\) such that
\[
\widehat{\operatorname{ch}}_2(\TT_X)H^2 = \sum_{j}\widehat{\operatorname{ch}}_2(G_j)H^2-\varepsilon.
\]
Since \(c_1(G_j)\equiv r_j\beta_jH\), we obtain the global numerical identity
\[
 \frac{V-2\Chat}{2}
 =\frac V2\sum_jr_j\beta_j^2
  -\sum_j\frac{\Disc(G_j)H^2}{2r_j}-\varepsilon
 \le\frac V2\sum_i\lambda_i^2,
\]
where each discriminant term is nonnegative by the orbifold
Bogomolov--Gieseker inequality. This proves \eqref{eq:chern}. Applying the same argument to \(\EE_X\) gives the second assertion. Since all discriminant and codimension-two defect terms in the sum are nonnegative, equality forces each discriminant term to vanish, proving the last assertion.
\end{proof}

\begin{lemma}\label{lem:cartierRR}
For every integer \(m\), we have the explicit Riemann--Roch formula
\begin{equation}\label{eq:cartierRR}
       \chi(X,\OO_X(mH))
       =1+\frac V{24}m^2(m+1)^2+\frac C{24}m(m+1).
\end{equation}
For \(m\ge0\), this equals \(h^0(X,mH)\). In particular,
\begin{equation}\label{eq:P1}
                       P_1:=h^0(X,H)=1+\frac V6+\frac C{12}.
\end{equation}
\end{lemma}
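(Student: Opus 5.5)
We pass to a resolution and apply Hirzebruch--Riemann--Roch there. Choose a resolution \(f:\widetilde X\to X\) as in the definition of \(C\). It minimally resolves the transverse Du Val singularities in codimension two, and we may also assume it is an isomorphism over the smooth locus. Canonical singularities are rational, so \(R^if_*\OO_{\widetilde X}=0\) for \(i>0\). Since \(H\) is Cartier, the projection formula then gives
\[
\chi(X,\OO_X(mH))=\chi(\widetilde X,f^*\OO_X(mH)).
\]
We expand the right-hand side by Hirzebruch--Riemann--Roch on the smooth fourfold \(\widetilde X\). Write \(M=f^*H\) and \(K=K_{\widetilde X}=-M+E\), where \(E\ge0\) is \(f\)-exceptional; this holds because \(X\) is canonical and Gorenstein.

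The Todd class terms are as follows:
\[
\chi=\frac{m^4M^4}{24}-\frac{m^3M^3K}{12}+\frac{m^2M^2(K^2+c_2)}{24}-\frac{mM\,Kc_2}{24}+\chi(\OO_{\widetilde X}).
\]
We simplify each coefficient in turn.

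\begin{itemize}
\item Degree \(4\): \(M^4=V\).
\item Degree \(3\): \(M^3E=0\), since \(E\) is exceptional, so \(M^3K=-V\).
\item Degree \(2\): \(M^2E^2=0\). Indeed, \(f(E)\) has codimension at least two, so \(M^2\) is represented by a surface avoiding the image of the codimension-\(\ge 3\) exceptional locus. Over codimension-two points the resolution is crepant, being minimal on Du Val singularities, so the components of \(E\) with positive coefficient lie over codimension \(\ge3\) and meet \(M^2\) trivially. The same argument gives \(M^2EK=0\). Hence \(M^2K^2=V\), and this coefficient is \((V+C)/24\).
\item Degree \(1\): we need \(MKc_2(\widetilde X)\). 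Here Riemann--Roch for \(\chi(\OO_{\widetilde X})\) gives \(-Kc_2\) only at the level of the Todd degree-three term, \(\tfrac1{24}c_1c_2\), which is not quite what is needed. The cleaner route is Serre duality on \(X\): it is Cohen--Macaulay with \(\omega_X=\OO(-H)\), so \(\chi(mH)=\chi(-(m+1)H)\), and the Hilbert polynomial \(P(m)\) is invariant under \(m\mapsto -1-m\).
\end{itemize}

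A quartic in \(m\) with this symmetry is a polynomial in \(m(m+1)\). We write it as
\[
a\,m^2(m+1)^2+b\,m(m+1)+c.
\]
Matching the \(m^4\) coefficient gives \(a=V/24\). Consistently, the \(m^3\) coefficient is then \(2a=V/12\), which agrees with \(-M^3K/12\). Matching \(m^2\) gives \(a+b=(V+C)/24\), so \(b=C/24\). The constant term is \(c=\chi(\OO_X)\). Kawamata--Viehweg vanishing, applicable since \(X\) is klt and \(-K_X\) is ample, gives \(h^i(\OO_X)=0\) for \(i>0\), so \(c=1\).

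For \(m\ge0\), the divisor \(mH-K_X=(m+1)H\) is ample. Kawamata--Viehweg vanishing on the klt variety \(X\) then gives \(h^i(mH)=0\) for \(i>0\), so \(\chi=h^0\). Setting \(m=1\) yields \(P_1=1+4V/24+2C/24=1+V/6+C/12\).

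The main obstacle is justifying the vanishing of the \(E\)-terms in the \(m^2\) coefficient. This amounts to showing that \(M^2\cdot c_2(\widetilde X)\) is exactly the \(C\) defined earlier, and that \(M^2E\cdot(\cdot)=0\). Both follow by cutting with two general members of \(|kH|\) for \(k\gg0\). This reduces the question to a surface with only Du Val singularities, whose minimal resolution is crepant; on that surface the identity \(K_{\widetilde S}^2=K_S^2\) holds and Noether's formula behaves as expected.
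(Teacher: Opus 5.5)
Your proposal is correct and follows the paper's proof essentially step by step. You use rationality of canonical singularities to pass to a resolution crepant in codimension two, apply Hirzebruch--Riemann--Roch to get the \(m^4,m^3,m^2\) coefficients via \(M^3E=M^2E^2=0\), fix the remaining coefficients by the Serre-duality symmetry \(P(m)=P(-m-1)\) and \(P(0)=1\), and finish with Kawamata--Viehweg vanishing for \(m\ge0\). The identification \(M^2c_2(\widetilde X)=C\) that you flag as the main obstacle is simply the paper's definition of \(C\), so no surface reduction is needed for it.
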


\begin{proof}
Let \(f:\widetilde X\to X\) be a resolution minimal in codimension two, and set \(\widetilde H=f^*H\).
Since canonical singularities are rational \cite[Theorem 5.22]{KM}, the Euler characteristics on
\(X\) and \(\widetilde X\) coincide: \(P(m):=\chi(X,\OO_X(mH))=\chi(\widetilde X,\OO_{\widetilde X}(m\widetilde H))\) for \(m\in \mathbb Z\).
In \(K_{\widetilde X}=-\widetilde H+E\), every component of \(E\)
has center of codimension at least three because the resolution is crepant in codimension two, so
\(\widetilde H^3E=\widetilde H^2E^2=0\). A direct computation via
Hirzebruch--Riemann--Roch \cite[Chapter 15]{Fulton} gives the coefficients \(V/24,V/12,(V+C)/24\) of \(m^4,m^3,m^2\).
Serre duality gives \(P(m)=P(-m-1)\), and \(P(0)=1\). These relations determine the remaining coefficients uniquely and yield \eqref{eq:cartierRR}. Since \(mH-K_X=(m+1)H\) is ample for \(m\ge0\), the integral \(\QQ\)-Cartier Weil-divisor form of Kawamata--Viehweg vanishing \cite[Theorem 1.2]{FujinoKV} gives $\chi(X,\OO_X(mH))=h^0(X,\OO_X(mH))$ for all $m\ge 0$.
\end{proof}

\subsection{Harder--Narasimhan terms and foliations}

We briefly recall the foliation terminology used below. A \textbf{foliation} on a normal variety $X$ is a saturated coherent subsheaf $\FF \subset \TT_X$ that is closed under the Lie bracket; equivalently, the induced morphism $\wedge^2\FF\rightarrow \TT_X/\FF$ is zero. On the smooth locus, $\FF$ corresponds to an integrable distribution of tangent subspaces. The foliation is \textbf{algebraically integrable} if the leaf through a general point is Zariski open in an algebraic subvariety of \(X\); we refer to the normalization of the closure of such a leaf as a \emph{general leaf}.

\begin{lemma}\label{lem:integrable}
Every proper HN term of \(\TT_X\) gives an algebraically integrable
foliation with rationally connected general leaf.
If \(\mathcal B\) is a proper HN term of \(\EE_X\), of rank
\(r\le4\), and \(c_1(\mathcal B)\equiv bH\), then we have
\begin{equation}\label{eq:anchorpolygon}
                          b\le\sum_{i=1}^r\lambda_i.
\end{equation}
\end{lemma}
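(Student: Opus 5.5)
The lemma has two parts: integrability of the proper HN terms of \(\TT_X\), and the polygon inequality \eqref{eq:anchorpolygon} for proper HN terms of \(\EE_X\). I would treat them separately.

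\emph{Integrability.} Let \(\FF\) be a proper HN term of \(\TT_X\) of rank \(r\). By Liu's positivity \cite[Proposition 2.9]{Liu}, every HN quotient of \(\TT_X\) has positive slope. In particular, the minimal slope of \(\FF\) is strictly larger than the maximal slope of \(\TT_X/\FF\). The O'Neill tensor \(\wedge^2\FF\to\TT_X/\FF\) is \(\OO_X\)-linear. I would compare slopes on both sides. The minimal slope of \((\wedge^2\FF)^{**}\) is at least twice the minimal slope of \(\FF\); this uses semistability of reflexive tensor powers \cite[Theorem 4.2 and Corollary 4.3]{GKPmov}, computed against the movable class \(H^3\). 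That quantity exceeds \(\mu_{\max}(\TT_X/\FF)\), because \(\mu_{\min}(\FF)>\mu_{\max}(\TT_X/\FF)\) and \(\mu_{\min}(\FF)>0\). Hence the map vanishes, and \(\FF\) is a foliation.

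Since \(\mu_{\min}(\FF)>0\) with respect to \(H^3\), I would then apply the Campana--Păun criterion (in its form for movable classes on singular varieties, via Bogomolov--McQuillan and Campana--Păun). This shows \(\FF\) is algebraically integrable with rationally connected general leaves.

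\emph{Polygon inequality.} Let \(\mathcal B\subset\EE_X\) be a proper HN term of rank \(r\le4\). Consider \(\alpha|_{\mathcal B}:\mathcal B\to\TT_X\). Its kernel is contained in \(\OO_X\), so it is either \(0\) or a rank-one subsheaf of \(\OO_X\).

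\textbf{Case 1: the kernel is \(0\).} Then \(\mathcal B\) embeds into \(\TT_X\) as a rank-\(r\) subsheaf. The HN polygon of \(\TT_X\) bounds the degree of any rank-\(r\) subsheaf by the sum of the top \(r\) slopes, giving \(c_1(\mathcal B)H^3\le V\sum_{i\le r}\lambda_i\).

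\textbf{Case 2: the kernel has rank one.} Then the kernel is \(\mathcal I\subset\OO_X\) with \(c_1(\mathcal I)\le0\). The image has rank \(r-1\) and lies in \(\TT_X\), so
\[
c_1(\mathcal B)H^3\le V\sum_{i\le r-1}\lambda_i\le V\sum_{i\le r}\lambda_i,
\]
using \(\lambda_r>0\).

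Dividing by \(V\) gives \(b\le\sum_{i\le r}\lambda_i\). The restriction \(r\le4\) is only needed so that \(\sum_{i\le r}\lambda_i\) makes sense in Case 1.

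\emph{Main obstacle.} The delicate point is the algebraic integrability in the singular setting. One must check that the Campana--Păun criterion applies to the movable class \(H^3\) on a klt variety. I would handle this by passing to a resolution and pulling back: the HN term and its positivity are preserved in codimension one, which is all the criterion uses.
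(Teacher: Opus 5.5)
Your proposal is correct and follows essentially the same route as the paper: the O'Neill tensor \(\wedge^2\FF\to\TT_X/\FF\) vanishes by the comparison \(\mu_{\min}(\wedge^2\FF)\ge2\mu_{\min}(\FF)>\mu_{\max}(\TT_X/\FF)\), Campana--P\u{a}un is applied on a resolution against the pullback of \(H^3\), and the polygon inequality splits according to whether \(\mathcal B\) meets the kernel \(\OO_X\) of \(\alpha\). The only difference is cosmetic. The paper uses saturation of \(\mathcal B\) to get \(\mathcal B\cap\OO_X\in\{0,\OO_X\}\), so that the image has the same \(c_1\); you instead allow an arbitrary rank-one \(\mathcal I\subset\OO_X\) and use \(c_1(\mathcal I)H^3\le0\), which works equally well.
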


\begin{proof}
For a proper HN term \(\FF\subset \TT_X\), positivity of $\FF$ and the strict slope inequality $\mu_{\min}(\FF)>\mu_{\max}(\TT_X/\FF)$ give
\[
  \mu_{\min}(\wedge^2\FF)\ge2\mu_{\min}(\FF)
            >\mu_{\max}(\TT_X/\FF),
\]
so \(\operatorname{Hom}(\wedge^2\FF,\TT_X/\FF)=0\).
Thus \(\FF\) is closed under the Lie bracket and defines a foliation. Pulling back to a resolution and computing slopes against the movable curve class of the pullback of \(H^3\), a general representing curve avoids the exceptional locus, so the minimal slope remains positive. Campana--P\u{a}un's criterion
\cite[Theorem 1.1]{CP} then gives algebraic integrability and rational connectedness of the general leaf. 

Since \(\mathcal B\) is saturated in \(\EE_X\), its intersection with
the kernel \(\OO_X\) of the natural projection \(\alpha:\EE_X\to\TT_X\) in \eqref{eq:extension} is either zero or
\(\OO_X\). In the first case, \(\mathcal B\) injects into \(\TT_X\), and the HN polygon upper bound gives
\[
bV=c_1(\mathcal{B})H^3\leq V(\lambda_1+\dots+\lambda_r).
\]
In the second case, its image in \(\TT_X\) has rank \(r-1\) and the same first Chern
class; since all $\lambda_i>0$, we have
\[
b\leq \lambda_1+\dots+\lambda_{r-1}< \lambda_1+\dots+\lambda_r.
\]
Thus \eqref{eq:anchorpolygon} holds in both cases.
\end{proof}

\subsection{Divisorial sheaves and ordinary adjunction}

We record two local facts used in the canonical equality case. For a
prime divisor \(D\subset X\), we write \(\operatorname{Diff}_{D^\nu}(0)\)
for the ordinary different on its normalization \(D^\nu\), so that ordinary
adjunction reads
\[
 K_{D^\nu}+\operatorname{Diff}_{D^\nu}(0)
 \qsim \nu^*((K_X+D)|_D).
\]
Note that \(\operatorname{Diff}_{D^\nu}(0)\) is distinct from the log-leaf boundary \(\Delta_G\) of a foliation leaf.

\begin{lemma}\label{lem:divisorialCM}
Let \(Y\) be a klt variety and \(M\) an integral \(\QQ\)-Cartier
Weil divisor. Then \(\OO_Y(M)\) is Cohen--Macaulay.
If \(D\subset Y\) is a prime \(\QQ\)-Cartier divisor, then \(D\)
is Cohen--Macaulay.
\end{lemma}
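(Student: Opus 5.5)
The plan is to reduce both assertions to the standard local fact that, on a klt variety, a divisorial sheaf of an integral \(\QQ\)-Cartier Weil divisor is a direct summand of a Cohen--Macaulay module obtained from an index-one cover. Work locally at a point \(y\in Y\). Since \(M\) is \(\QQ\)-Cartier, some multiple \(rM\) is Cartier near \(y\); choose \(r\) minimal, so that \(rM\sim0\) near \(y\). Take the local index-one cover
\[
 \pi\colon \widetilde Y=\Spec_Y\bigoplus_{i=0}^{r-1}\OO_Y(-iM)\longrightarrow Y,
\]
which is a cyclic cover, étale in codimension one. Then \(K_{\widetilde Y}=\pi^*K_Y\), and klt singularities pull back to klt singularities under finite morphisms étale in codimension one \cite[Proposition 5.20]{KM}, so \(\widetilde Y\) is klt. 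Klt varieties have rational singularities \cite[Theorem 5.22]{KM}, hence \(\widetilde Y\) is Cohen--Macaulay.

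Next, \(\pi_*\OO_{\widetilde Y}=\bigoplus_{i=0}^{r-1}\OO_Y(-iM)\). Since \(\pi\) is finite, this module is Cohen--Macaulay over \(\OO_{Y,y}\): depth is preserved under finite pushforward, and dimensions agree. A direct summand of a maximal Cohen--Macaulay module is again maximal Cohen--Macaulay, because local cohomology commutes with direct sums. Hence every \(\OO_Y(-iM)\) is Cohen--Macaulay. Because \(\OO_Y(M)\simeq\OO_Y(-(r-1)M)\) locally (using \(rM\sim0\)), the sheaf \(\OO_Y(M)\) is one of the summands, which proves the first assertion. Alternatively, one may cite \cite[Corollary 5.25]{KM} directly, which is exactly this statement. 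The only point needing care is the klt property of \(\widetilde Y\). I expect this to be the main (mild) obstacle, and I would handle it by the ramification formula in codimension one together with the cited result of \cite{KM}.

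For the second assertion, use the exact sequence
\[
 0\to\OO_Y(-D)\to\OO_Y\to\OO_D\to0.
\]
By the first part, \(\OO_Y(-D)\) is Cohen--Macaulay of dimension \(n=\dim Y\), and \(\OO_Y\) is Cohen--Macaulay as well. The depth lemma then gives \(\operatorname{depth}\OO_{D,y}\ge n-1=\dim\OO_{D,y}\) at every point \(y\in D\). Hence \(D\) is Cohen--Macaulay.
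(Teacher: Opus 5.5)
Your proposal is correct and follows essentially the same route as the paper. Both arguments take the local quasi-\'etale cyclic (index-one) cover, get that it is klt by \cite[Proposition 5.20]{KM} and hence Cohen--Macaulay by \cite[Theorem 5.22]{KM}, realize \(\OO_Y(M)\) as a direct summand of its finite pushforward, and then apply the depth lemma to \(0\to\OO_Y(-D)\to\OO_Y\to\OO_D\to0\).
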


\begin{proof}
Working locally, choose a positive integer \(r\) such that
\(rM\sim0\). Since the order of a local defining equation
of \(rM\) along each prime divisor is divisible by \(r\), the normalized cyclic cover associated with this
trivialization is quasi-\'etale. The cover is
klt by \cite[Proposition 5.20]{KM}, and hence Cohen--Macaulay by
\cite[Theorem 5.22]{KM}. In characteristic zero, the eigensheaves in its finite pushforward
are the divisorial sheaves \(\OO_Y(iM)\) (with the sign depending
on the chosen character), which are direct summands. Because a system of parameters downstairs pulls back to a system of parameters
on the finite cover, each summand is a maximal Cohen--Macaulay
module, proving the first assertion (see also
\cite[Corollary 5.25]{KM}). For the second assertion, we apply the
depth lemma to
\[
 0\longrightarrow\OO_Y(-D)\longrightarrow\OO_Y
 \longrightarrow\OO_D\longrightarrow0.
\]
\end{proof}

\begin{lemma}\label{lem:ordinarydifferent}
Let \(Y\) have canonical Gorenstein singularities, let \(D\subset Y\)
be a reduced prime \(\QQ\)-Cartier divisor, and let
\(\nu:G\to D\) be its normalization. Suppose that \(K_G\) is
\(\QQ\)-Cartier. The coefficients of the ordinary different belong to
\[
 \{0\}\ \cup\
 \{1-1/r:r\in\ZZ,\ r\ge2\}\ \cup\ [1,\infty).
\]
In particular, its smallest positive coefficient is \(1/2\), and
every positive coefficient other than \(1/2\) is at least \(2/3\).
If all coefficients are less than one, then \(D\) is normal and
\((Y,D)\) is plt at the generic point of every prime divisor of \(D\).
At a coefficient \(1-1/r\), the corresponding transverse
singularity of \(Y\) is \(A_{r-1}\); a zero coefficient corresponds
to a smooth transverse point.
\end{lemma}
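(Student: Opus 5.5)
The statement is local along codimension-one points of \(D\): the different's coefficient on a prime divisor \(P\subset G\) only depends on the pair \((Y,D)\) at the generic point of \(\nu(P)\), which has codimension two in \(Y\). I will therefore cut \(Y\) by \(\dim Y-2\) general hyperplane sections through a general point of \(\nu(P)\). This reduces everything to a germ \((S,C)\), where \(S\) is a surface and \(C\) is a reduced curve germ. Since \(Y\) is canonical Gorenstein, it is transversely Du Val in codimension two, so \(S\) is either smooth or a Du Val singularity at the point. Adjunction and the different commute with general hyperplane sections, so the coefficient of \(P\) equals the different coefficient on the normalization of \(C\), summed over the branches lying over the point. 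I will invoke Lemma~\ref{lem:divisorialCM} to know that \(D\) is Cohen--Macaulay, hence \(S_2\). This means nonnormality of \(D\) can only occur along codimension-one loci of \(D\), and those are exactly what this surface computation sees.

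Next I treat the surface cases, following the standard computation in Koll\'ar--Mori and Shokurov.

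\emph{Case 1: the germ is plt.} If \(C\) is a smooth (single-branch) curve through the point and \((S,C)\) is plt, then \(S\) is a cyclic quotient \(\tfrac1r(1,a)\) and the different is \(1-1/r\). Since \(S\) is also Gorenstein (Du Val), the cyclic quotient must be \(\tfrac1r(1,-1)\), that is, an \(A_{r-1}\) point. This gives the claimed correspondence between the coefficient \(1-1/r\) and a transverse \(A_{r-1}\) singularity, and \(r=1\) gives coefficient zero and a smooth point.

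\emph{Case 2: the germ is not plt.} If \((S,C)\) is not plt, then either \(C\) has at least two branches or a singular branch, or \(S\) is a non-cyclic Du Val point (types \(D\) and \(E\)). In each of these subcases, inversion of adjunction gives that \((S,C)\) is not plt, so \(K_{C^\nu}+\operatorname{Diff}\) is not klt at the point. Hence some coefficient is at least \(1\).

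Coefficients that are at least one form the \([1,\infty)\) part of the claimed set, and the hypothesis that \(K_G\) is \(\QQ\)-Cartier keeps \(\operatorname{Diff}\) well-defined. The numerical remarks (smallest positive coefficient \(1/2\), all others at least \(2/3\)) then follow immediately from the form \(1-1/r\).

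For the final assertion, suppose all coefficients are less than one. Then Case 2 never occurs, so at every codimension-one point of \(D\) the curve \(C\) is a smooth single branch. Hence \(D\) is regular in codimension one, i.e.\ \(R_1\). Combined with \(S_2\) from Lemma~\ref{lem:divisorialCM}, Serre's criterion shows that \(D\) is normal. The plt statement at the generic points of the prime divisors of \(D\) is exactly the inversion-of-adjunction conclusion in Case 1.

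I expect the main obstacle to be the non-plt dichotomy in Case 2. It requires that a non-cyclic Du Val point, or a nodal or cuspidal \(C\) even on an \(A_n\) point, forces a coefficient of at least \(1\); a nodal curve on a smooth surface already gives \(1\). I would handle this first by citing inversion of adjunction for surfaces (Koll\'ar--Mori 5.57-type results and the classification of plt surface pairs) rather than computing case by case.
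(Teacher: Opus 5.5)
Your route differs from the paper's. The paper passes to the smooth quasi-\'etale cover \(\CC^2\to S\) and treats each branch separately through the conductor and the Hurwitz relation \(rd=c+(r-1)\), and it identifies \(A_{r-1}\) via the faithful action of \(\Gamma\subset\mathrm{SL}_2(\CC)\) on the tangent line. You instead use the classification of plt surface germs together with inversion of adjunction. Your plt case and your normality argument (\(S_2\) from Lemma~\ref{lem:divisorialCM} plus \(R_1\)) are fine.

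\textbf{The gap.} It lies in the non-plt case when \(C\) has several branches at \(p\). Your reduction says the coefficient of \(P\) is the different coefficient ``summed over the branches lying over the point''. This is false. The coefficient of \(P\) is the coefficient of \(\operatorname{Diff}_{C^\nu}(0)\) at \emph{each} point of \(C^\nu\) lying on the slice of \(P\). Different branches at \(p\) may lie on different prime divisors \(P_1\neq P_2\) of \(G\), for instance two sheets of \(D\) crossing along a surface that are not exchanged by monodromy. When they lie on the same \(P\), summing overcounts. Inversion of adjunction only tells you that \((C^\nu,\operatorname{Diff}_{C^\nu}(0))\) fails to be klt at \emph{some} point over \(p\). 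So one branch gets a coefficient \(\ge1\), and the other branches are left uncontrolled. As written, nothing excludes a coefficient such as \(1/10\), or a coefficient \(1-1/r\) coming from a non-plt germ, on another branch. This undermines three things:
\begin{itemize}
\item the set-membership claim;
\item the assertion that a coefficient \(1-1/r\) forces a transverse \(A_{r-1}\) point;
\item the per-divisor information used later in Lemma~\ref{lem:normalmember} and Corollary~\ref{cor:volumegap}, where an \(L^2\)-degree of \(1/2\) must force \(\operatorname{Diff}_G(0)=\frac12E\).
\end{itemize}

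\textbf{The fix.} Run inversion of adjunction one branch at a time. Near \(p\), write \(C=C_1+C'\) with \(C_1\) a single branch and \(p\in C'\). On \(C_1^\nu\), the different of \(C\) equals \(\operatorname{Diff}_{C_1^\nu}(C')\). The pair \((S,C_1+C')\) is not plt at \(p\), because plt forces the reduced part of the boundary to be normal, and here two of its components meet at \(p\). Since \(C_1^\nu\) has a single point over \(p\), that point has coefficient \(\ge1\). This is the analogue of the paper's observation that the reduced preimage in \(\CC^2\) is singular at the origin, so every upstairs branch has \(c\ge1\) and hence \(d=(c+r-1)/r\ge1\).

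\textbf{A minor point.} Your list of non-plt subcases is incomplete: a smooth single branch through a cyclic Du Val point need not be plt. With \(\mu_5\) acting on \(\CC^2\) by \((\zeta,\zeta^{-1})\), the invariant cusp \(y^2=x^3\) maps to a smooth curve through the \(A_4\) point with coefficient \(6/5\). This is harmless, because you only use the implication ``not plt \(\Rightarrow\) some coefficient \(\ge1\)''.
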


\begin{proof}
We compute at the generic point of a prime divisor
of \(G\). A general surface transverse to its image in \(Y\) is smooth
or Du Val. Passing to its smooth finite quasi-\'etale cover,
we obtain a reduced plane curve. Let \(\widetilde C\) be the
normalization of this curve, and let \(c\) be the coefficient of
its conductor divisor on a chosen branch; then \(c\) is a nonnegative integer that
vanishes if and only if the reduced curve is smooth at that
point. If \(r\) is the ramification index of the map from this
normalized branch to the normalized curve downstairs, adjunction
on the cover and the Hurwitz formula give
\[
 rd=c+(r-1),
 \qquad
 d=\frac{c+r-1}{r},
\]
where \(d\) is the different coefficient downstairs
\cite[Section 5.2]{KM}. If \(c=0\), then \(d=1-1/r\); if \(c\ge1\),
then \(d\ge1\). This also shows that the different is effective.

Suppose now that every coefficient is less than one. Then \(c=0\) at each such generic point, so the preimage
curve is a single smooth branch.
Its quotient is normal, meaning that \(D\) is regular in codimension one; since \(D\) is \(S_2\) by Lemma~\ref{lem:divisorialCM}, Serre's criterion
implies that \(D\) is normal. Let \(\Gamma\subset\mathrm{SL}_2(\CC)\)
be the group of the transverse Du Val cover. It preserves the
tangent line of the smooth branch, and its action on this line is
faithful because any finite-order element of \(\mathrm{SL}_2(\CC)\) with one eigenvalue equal to
one has both eigenvalues equal to one and is the identity.
Thus \(\Gamma\) is cyclic of order \(r\), and the transverse surface
singularity is \(A_{r-1}\). Since the smooth cover together with its smooth
boundary is plt, the same holds downstairs at the generic
point under consideration.
\end{proof}

\section{Jet estimates and the Harder--Narasimhan filtration}\label{sec:jets}

We establish a finite section estimate and an asymptotic volume bound in a form that also applies to
the primitive Weil divisor $A$. The principal-parts filtration is the one used by Lyu--Zhang \cite[Lemma 2.3]{LZ}; their Theorem~2.5 is polarized, and their Remark~2.8 discusses the reflexive extension to K-semistable \(\QQ\)-Fano varieties. Here we retain all individual HN slopes rather than replacing them by their average.

\begin{definition}\label{def:jetweight}
For a positive vector \(\lambda=(\lambda_1,\ldots,\lambda_n)\) and a monomial \(z^\alpha\) (\(\alpha\in\ZZ_{\ge 0}^n\)), write \(\operatorname{wt}_\lambda(z^\alpha)=\alpha\cdot\lambda\) for its \emph{weight}, and \(|\alpha|\) for its ordinary total degree. For an integer \(N\ge0\), let \(\J_\lambda(N)\) be the sum of the weights of \(N\)
monomials of smallest weight in
\[
                   \{\alpha\cdot\lambda:\alpha\in\ZZ_{\ge0}^n\},
\]
with distinct monomials counted separately, and \(\J_\lambda(0)=0\).
\end{definition}

\begin{theorem}[Jet estimates for Weil divisors]\label{thm:jets}
Let \(Z\) be a projective klt \(n\)-fold and \(P\) an ample
Cartier divisor. We compute tangent slopes with respect to \(P^{n-1}\),
and suppose the normalized HN slopes
\(\sigma_i=\mu_i(\TT_Z)/P^n\) are all positive. For an integral
\(\QQ\)-Cartier Weil divisor \(D\), denote
\(t_D=D\cdot P^{n-1}/P^n\). Then
\begin{equation}\label{eq:finitejet}
 \J_\sigma(h^0(Z,D))\le t_Dh^0(Z,D).
\end{equation}
In particular,
\begin{equation}\label{eq:asymjet}
 P^n\le\frac{((n+1)/n)^n}{\prod_i\sigma_i}.
\end{equation}
Here no proportionality between \(D\) and \(P\) is required.
The same conclusions hold for an ample \(\QQ\)-Cartier
polarization, with the indicated intersection-theoretic normalization.
\end{theorem}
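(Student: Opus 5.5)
We follow the principal-parts (jet) argument of Lyu--Zhang \cite[Lemma 2.3]{LZ}, but we keep track of the individual Harder--Narasimhan slopes instead of replacing them by their average. Set \(N=h^0(Z,D)\) and assume \(N\ge1\). Choose a general point \(x\in Z_{\reg}\) at which \(\OO_Z(D)\) is locally free, and filter \(W=H^0(Z,D)\) by order of vanishing at \(x\): \(W_k=\{s:\operatorname{ord}_x s\ge k\}\). Because \(x\) is general, this filtration spreads out to a relative version over \(Z\). On the big open set where \(Z\) is smooth and \(\OO_Z(D)\) is invertible, the graded pieces of the principal-parts sheaves are \(\Sym^k\Omega_Z\otimes\OO_Z(D)\). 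Evaluation of \(k\)-jets therefore gives injections
\[
 \mathrm{gr}_k W\otimes\OO_Z\hookrightarrow(\Sym^k\Omega_Z)^{**}\otimes\OO_Z(D)
\]
generically, that is, on the image of the trivial bundle. In other words, \(\mathrm{gr}_kW\otimes\OO_Z\) maps to a subsheaf of this reflexive sheaf whose rank equals \(\dim\mathrm{gr}_kW\). This subsheaf is generically generated by global sections, so its \(\mu\)-degree with respect to \(P^{n-1}\) is \(\ge0\).

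The central step is a slope estimate. The HN filtration of \(\TT_Z\) dualizes to a filtration of \(\Omega_Z\) with slopes \(-\sigma_iP^n\). By semistability of reflexive symmetric and tensor powers \cite[Corollary 4.3]{GKPmov}, the sheaf \((\Sym^k\Omega_Z)^{**}\) carries a filtration whose graded pieces are (reflexive hulls of) symmetric products of the HN quotients, and these pieces are semistable. The slopes of these pieces are exactly \(-\operatorname{wt}_\sigma(z^\alpha)P^n\) for \(|\alpha|=k\), each counted with multiplicity one per monomial. An HN polygon argument then shows that any subsheaf of rank \(m\) has \(c_1\cdot P^{n-1}\) at most \(-P^n\) times the sum of the \(m\) smallest weights among the degree-\(k\) monomials. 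After twisting by \(D\), the nonnegativity of the degree from the first paragraph gives
\[
 \textstyle\sum(\text{smallest }m_k\text{ weights in degree }k)\le t_D m_k,\qquad m_k=\dim\mathrm{gr}_kW.
\]
Summing over \(k\) yields \(\sum_k(\cdot)\le t_DN\). Now choose \(\sum m_k=N\) monomials in total. Any \(N\) monomials have total weight at least \(\J_\sigma(N)\), so the sum on the left is \(\ge\J_\sigma(N)\), and this proves \eqref{eq:finitejet}. No proportionality between \(D\) and \(P\) enters, since the twist by \(D\) only contributes \(c_1(\OO(D))\cdot P^{n-1}=t_DP^n\).

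For \eqref{eq:asymjet}, apply \eqref{eq:finitejet} to \(D=mP\), so that \(t_D=m\) and \(N=h^0(mP)\sim m^nP^n/n!\). The set of \(\alpha\) with \(\alpha\cdot\sigma\le w\) is a simplex of volume \(w^n/(n!\prod\sigma_i)\), and its weight integral is \(\frac{n}{n+1}w\) times the count. If \(N\) monomials fill weight up to \(w\), then \(w\approx(P^n\prod\sigma_i)^{1/n}m\), and the inequality becomes \(\frac{n}{n+1}(P^n\prod\sigma_i)^{1/n}\le1\) asymptotically. This is exactly \eqref{eq:asymjet}. The \(\QQ\)-Cartier polarization case follows by scaling \(P\) to a Cartier multiple; both sides are homogeneous of the same degree.

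The main obstacle is the middle step. We must justify that subsheaves of \((\Sym^k\Omega_Z)^{**}\otimes\OO(D)\) obey the monomial-weight polygon bound on a singular klt variety with only a Weil divisor \(D\). This requires working on \(Z_{\reg}\) minus the non-Cartier locus of \(D\); all relevant sheaves are reflexive and the removed locus has codimension \(\ge2\), so degrees are unaffected. It also requires knowing that tensor products of semistable reflexive sheaves are semistable in this setting \cite[Theorem 4.2]{GKPmov}, so that the filtration with semistable pieces of the prescribed slopes exists. Once this is in place, the bound on subsheaf degrees follows from the standard fact that a subsheaf's degree is at most the sum over graded pieces of the maximal contributions, with each semistable piece contributing at most rank times its slope.
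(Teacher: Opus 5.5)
There is a genuine gap in your first paragraph, and it carries into the per-degree inequality you sum over $k$.

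The space $\mathrm{gr}_kW=W_k/W_{k+1}$ depends on the point $x$. So there is no sheaf map from the constant sheaf $\mathrm{gr}_kW\otimes\OO_Z$ to $\Sym^k\Omega_Z^{[1]}\,\widehat\otimes\,\OO_Z(D)$: the $k$-th order symbol is only defined on sections vanishing to order $k$ at the chosen point.

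When the filtration is spread out over $Z$, the correct object is a filtration of the trivial sheaf $W\otimes\OO_Z$ by saturated subsheaves $K_k$, the kernels of evaluation of jets of order $<k$. Its graded pieces $Q_k=K_k/K_{k+1}$ embed into $\Sym^k\Omega_Z^{[1]}\,\widehat\otimes\,\OO_Z(D)$. These $Q_k$ are not generically generated by global sections, and their degrees can be negative.

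For example, take $Z=\PP^n$ and $P=D=H$. Then $\sigma_i=(n+1)/n$, $t_D=1$ and $N=n+1$. The Euler sequence gives $K_1=Q_1=\Omega_{\PP^n}(1)$, which has degree $-1$. Your per-degree inequality at $k=1$ would read $s_1(n)=n+1\le t_Dm_1=n$, which is false. So the inequalities you add up do not hold individually.

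The missing ingredient, which is exactly what the paper uses, is the telescoping identity for the filtration of the trivial sheaf. The filtration $K_0=W\otimes\OO_Z\supset K_1\supset\cdots$ terminates, because a section vanishing to all orders at a general point is zero. Hence
\[
\textstyle\sum_k\rk Q_k=N,\qquad \sum_k c_1(Q_k)=c_1(W\otimes\OO_Z)=0.
\]
You already have the correct per-degree HN polygon bound
\[
c_1(Q_k)P^{n-1}/P^n\le t_Dm_k-s_k(m_k).
\]
Summing it over $k$ and using the telescoping identity gives $0\le t_DN-\sum_ks_k(m_k)$. Therefore $\J_\sigma(N)\le\sum_ks_k(m_k)\le t_DN$.

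In the $\PP^n$ example, the positive degree of $Q_0=\OO_{\PP^n}(1)$ exactly offsets the negative degree of $Q_1$, and equality holds.

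With this correction, the rest of your argument matches the paper's proof. That includes the filtration of reflexive symmetric powers with semistable pieces of slopes $-\alpha\cdot\sigma$, the reduction to $Z_{\reg}$ minus the non-Cartier locus of $D$, the step dropping the per-degree constraint to reach $\J_\sigma(N)$, and the asymptotic simplex computation.
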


\begin{proof}
Let \(W=H^0(Z,D)\) and \(N=\dim W\); we may assume \(N>0\). Over \(Z_{\reg}\), the sheaf \(\OO_Z(D)\) is a line bundle. Setting
\(K_0=W\otimes\OO_Z\), we define \(K_{j+1}\) for each \(j\ge0\) to be the kernel of the evaluation of jets through order \(j\) on \(Z_{\reg}\), extended as a saturated subsheaf of \(K_0\).
In codimension one, the symbol sequence for principal parts gives
\begin{equation}\label{eq:jetquotients}
            Q_j:=K_j/K_{j+1}\ \lhook\joinrel\longrightarrow\
            \Sym^j\Omega_Z^{[1]}\,\widehat\otimes\,\OO_Z(D),
\end{equation}
where \(\widehat\otimes\) denotes the reflexive tensor product, which does not affect codimension-one degrees.

At a general smooth point of \(Z\), the decreasing subspaces of \(W\) consisting of sections vanishing to order at least \(j\) have zero intersection by Krull's intersection
theorem. Since \(\dim W<\infty\), the filtration terminates, and setting \(b_j=\rk Q_j\) gives
\begin{equation}\label{eq:telescope}
                         \sum_j b_j=N,\qquad \sum_jc_1(Q_j)=c_1(K_0)=0.
\end{equation}

The HN filtration of $\TT_Z$ induces a filtration on each reflexive symmetric power $\Sym^j\Omega_Z^{[1]}$. Specifically, if the HN quotients $F_a$ of $\TT_Z$ have ranks $r_a$ and normalized slopes $\beta_a$, then after dualizing, the induced filtration on $\Sym^j\Omega_Z^{[1]}$ has graded factors $\widehat{\otimes}_a\Sym^{m_a}(F^*_a)$ with $\sum_{a} m_a =j$. By \cite[Theorem 4.2 and Corollary 4.3]{GKPmov}, these factors are slope semistable of normalized slope $-\sum_a m_a\beta_a$. Since each $\beta_a$ is repeated $r_a$ times in $\sigma=(\sigma_1,\ldots,\sigma_n)$, the rank of $\widehat{\otimes}_a\Sym^{m_a}(F^*_a)$ equals the number of multi-indices $\alpha\in \ZZ^n_{\geq 0}$ with $|\alpha|=j$ and weight $\alpha\cdot\sigma=\sum_a m_a\beta_a$. Reordering by slope and grouping equal slopes, the normalized HN slopes of $\Sym^j\Omega_Z^{[1]}\,\widehat\otimes\,\OO_Z(D)$ are precisely
\[
                        t_D-\alpha\cdot\sigma,\qquad |\alpha|=j,
\]
counted with multiplicities.

Let \(s_j(b)\) be the sum of the \(b\) smallest weights \(\alpha\cdot\sigma\) with
\(|\alpha|=j\). The rank-\(b_j\) HN polygon upper bound for \(Q_j\) in \eqref{eq:jetquotients} gives
\[
                \frac{c_1(Q_j)P^{n-1}}{P^n}
                      \le t_Db_j-s_j(b_j).
\]
Summing over \(j\) and using \eqref{eq:telescope} yields
\[
                  \J_\sigma(N)\le\sum_j s_j(b_j)\le t_DN,
\]
where the first inequality follows by dropping the restriction on the number of monomials chosen in each ordinary degree \(j\). This proves \eqref{eq:finitejet}.

For \eqref{eq:asymjet}, consider the simplex
\(\Delta_T:=\{(x_i)\in \mathbb R_{\ge 0}^n:\sum_i\sigma_ix_i\le T\}\), which has volume
\(T^n/(n!\prod_i\sigma_i)\). Comparing lattice points in \(\Delta_T\) with its volume gives
\[
 N_{\sigma}(T):=\#\{\alpha:\alpha\cdot\sigma\le T\}
       =\frac{T^n}{n!\prod_i\sigma_i}+O(T^{n-1}).
\]
Since the weight function $c(x)=\sum_i\sigma_ix_i$ is linear, its average over $\Delta_T$ equals its value at the barycenter. Under the rescaling $y_i=\sigma_ix_i/T$, $\Delta_T$ becomes the standard simplex $\{y_i\geq 0:\sum_i y_i\leq 1\}$ with barycenter $(\frac{1}{n+1},\dots,\frac{1}{n+1})$, so the average weight on $\Delta_T$ is $\frac{n}{n+1}T$.
Letting $T_N$ denote the weight scale of the $N$ monomials of smallest weight, we have
\[
T_N\sim\Bigl(n!N\prod_i\sigma_i\Bigr)^{\frac{1}{n}},
\]
and hence
\[
       \frac{\J_\sigma(N)}N
       \sim \frac n{n+1}
              \left(n!N\prod_i\sigma_i\right)^{1/n}.
\]
Applying \eqref{eq:finitejet} to \(D=mP\) with
\(N=h^0(Z,mP)\sim P^nm^n/n!\) and letting \(m\to\infty\) gives \eqref{eq:asymjet}. Finally, for an ample \(\QQ\)-Cartier polarization \(P\), choose \(a>0\) with \(aP\) Cartier; replacing \(P\) by \(aP\) divides each \(\sigma_i\) and \(t_D\) by \(a\), so homogeneity of \(\J_\sigma\) preserves \eqref{eq:finitejet} and the factors \(a^n\) cancel in \eqref{eq:asymjet}.
\end{proof}

\begin{proposition}[Asymptotic jet bound on a normal polarized variety]\label{prop:normaljets}
The volume inequality \eqref{eq:asymjet} holds for every normal
projective \(n\)-fold \(Z\) with an ample Cartier divisor \(P\)
and positive normalized tangent slopes \(\sigma_1,\ldots,\sigma_n\).
\end{proposition}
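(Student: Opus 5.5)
The plan is to rerun the proof of Theorem~\ref{thm:jets} with \(D=mP\). Since \(P\) is Cartier, every sheaf in the argument is a line bundle twisted by reflexive tensor constructions, so the klt hypothesis should become unnecessary. Klt entered Theorem~\ref{thm:jets} only through the \(\QQ\)-Cartier Weil-divisor setting and the semistability of reflexive symmetric powers. Both can be obtained on a normal variety by working on the smooth locus \(Z_{\reg}\), whose complement has codimension at least two.

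First, set \(W=H^0(Z,mP)\) and define the saturated kernels \(K_j\subset W\otimes\OO_Z\) exactly as before, using principal parts of the line bundle \(\OO_Z(mP)\) over \(Z_{\reg}\). In codimension one the graded pieces inject into \(\Sym^j\Omega_Z^{[1]}\widehat\otimes\OO_Z(mP)\), and the telescoping identity \eqref{eq:telescope} holds. Slopes are computed against \(P^{n-1}\), which is represented by complete intersection curves of general members of \(|kP|\). Such curves lie in \(Z_{\reg}\) and avoid any codimension-two set, so all degrees are determined on \(Z_{\reg}\).

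Second, I need the semistability of the graded pieces \(\widehat\otimes_a\Sym^{m_a}(F_a^*)\) of the HN filtration of \(\Sym^j\Omega^{[1]}_Z\). This is the main obstacle, because \cite{GKPmov} is phrased for movable classes on \(\QQ\)-factorial or klt varieties. I would first reduce to a statement about restrictions to a general complete intersection curve \(\Gamma\subset Z_{\reg}\) of high multiples of \(P\). By Mehta--Ramanathan, the HN filtration of \(\TT_Z|_\Gamma\) is the restriction of the HN filtration of \(\TT_Z\) and its quotients stay semistable. On the smooth curve \(\Gamma\), tensor products and symmetric powers of semistable bundles are semistable in characteristic zero (Narasimhan--Seshadri/Hartshorne). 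The HN polygon inequality for \(Q_j|_\Gamma\subset(\Sym^j\Omega\otimes\OO(mP))|_\Gamma\) then gives \(c_1(Q_j)P^{n-1}\le P^n\bigl(t b_j-s_j(b_j)\bigr)\) with \(t=m\). Alternatively, one could pass to a resolution \(f:\widetilde Z\to Z\) with the movable class \(f^*P^{n-1}\), noting that degrees and subsheaves agree in codimension one, as remarked after \eqref{eq:extension}.

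Third, summing over \(j\) gives \(\J_\sigma(h^0(Z,mP))\le m\,h^0(Z,mP)\) for all \(m\ge1\). The lattice-point asymptotics in the proof of Theorem~\ref{thm:jets} give \(\J_\sigma(N)/N\sim\frac n{n+1}(n!N\prod\sigma_i)^{1/n}\). These use only positivity of the \(\sigma_i\). Finally, \(h^0(Z,mP)\sim P^nm^n/n!\) holds for any ample Cartier divisor on a normal projective variety by asymptotic Riemann--Roch and Serre vanishing. Substituting and letting \(m\to\infty\) gives \(\frac n{n+1}(P^n\prod\sigma_i)^{1/n}\le1\), which is \eqref{eq:asymjet}.
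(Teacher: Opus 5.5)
Your proposal is correct and is essentially the paper's proof: form the saturated jet filtration for \(D=mP\) on \(Z_{\reg}\), bound each graded piece using the weights \(m-\alpha\cdot\sigma\) to get \(\J_\sigma(N_m)\le mN_m\), and conclude from \(N_m\sim P^nm^n/n!\) together with the lattice-point asymptotics. The one difference is in justifying semistability of reflexive tensor and symmetric powers on a normal variety: the paper cites Huybrechts--Lehn for this, while you derive it by Mehta--Ramanathan restriction to a general complete-intersection curve in \(Z_{\reg}\) plus the curve case, which is the standard proof of the cited result. (In your aside via a resolution, the relevant fact is that \(f^*P^{n-1}\) has degree zero on exceptional divisors, not that subsheaves agree in codimension one.)
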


\begin{proof}
We apply the filtration \eqref{eq:jetquotients} to \(D=mP\).
Principal parts are formed on \(Z_{\reg}\), and the kernels
extend as saturated subsheaves of \(H^0(Z,mP)\otimes\OO_Z\), so
the rank and first Chern class identities in
\eqref{eq:telescope} hold on a normal variety.
Since reflexive tensor and symmetric powers preserve slope semistability on normal projective varieties
in characteristic zero
\cite[Theorem 3.1.4 and Corollary 3.2.10]{HL}, setting \(N_m=h^0(Z,mP)\) gives
\[
 \J_\sigma(N_m)\le mN_m.
\]
By Serre vanishing and the leading term of the Hilbert polynomial,
\[
 N_m=\frac{P^n}{n!}m^n+O(m^{n-1}).
\]
Combining this with the lattice-point asymptotics in the proof
of Theorem~\ref{thm:jets} yields
\[
 1\ge\lim_{m\to\infty}\frac{\J_\sigma(N_m)}{mN_m}
   =\frac n{n+1}\left(P^n\prod_i\sigma_i\right)^{1/n},
\]
which is \eqref{eq:asymjet}.
\end{proof}

\begin{remark}[Anticanonical normalization]\label{rem:anti-jet}
In our setting, we take \(P=H=-K_Z\) and write \(\lambda_i=\sigma_i\), so that
\(\sum_i\lambda_i=1\); if \(D\equiv tH\), the finite
estimate reads \(\J_\lambda(h^0(Z,D))\le t h^0(Z,D)\).
Neither this statement nor its volume consequence requires Gorensteinness. Lyu--Zhang's polarized theorem \cite[Theorem 2.5]{LZ} treats arbitrary polarizations when the tangent bundle is semistable; retaining the individual HN slopes \(\lambda_i\) allows us to handle the unstable cases below.
\end{remark}

\begin{definition}
    For nonincreasing vectors $v,\lambda\in \mathbb R^n$ with the same total sum, we say that $v$ \textbf{majorizes} $\lambda$ if for every $1\leq k<n$, 
    \[
    \sum_{i=1}^kv_i\geq \sum_{i=1}^k\lambda_i.
    \]
\end{definition}

\begin{lemma}[Majorization and equality information]\label{lem:jet-equality}
Suppose that \(v\) and \(\lambda\) are nonincreasing positive vectors
with the same sum, and that \(v\) majorizes \(\lambda\). Then
\begin{equation}\label{eq:majorization}
 \prod_i\lambda_i\ge\prod_i v_i,\qquad
 \J_\lambda(N)\ge\J_v(N),\qquad
 \sum_i\lambda_i^2\le\sum_i v_i^2.
\end{equation}
Suppose instead that the actual vector of normalized HN slopes is used in
\eqref{eq:finitejet} and equality holds. Then the ranks \(b_j\) in its proof select a set of \(N\) monomials of smallest weight.
If every such choice contains exactly \(b\) monomials of ordinary total degree one, then the image of the rational map defined by \(|D|\) has dimension \(b\).
\end{lemma}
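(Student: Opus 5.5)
The three inequalities in \eqref{eq:majorization} will be proved separately, each as a consequence of majorization, and then the equality statement will be read off from the proof of Theorem~\ref{thm:jets}.

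\emph{The product and the sum of squares.} Both quantities are symmetric functions of the vector: \(x\mapsto\sum_i\log x_i\) is Schur-concave and \(x\mapsto\sum_ix_i^2\) is Schur-convex on the positive orthant. By the Hardy--Littlewood--P\'olya theorem, \(\lambda\) lies in the convex hull of the permutations of \(v\), so Karamata's inequality gives \(\sum\log\lambda_i\ge\sum\log v_i\) and \(\sum\lambda_i^2\le\sum v_i^2\). To stay elementary, one can instead use that \(\lambda\) is obtained from \(v\) by finitely many Robin Hood transfers. Each transfer replaces \((v_i,v_j)\) with \(v_i>v_j\) by \((v_i-\varepsilon,v_j+\varepsilon)\) where \(0\le\varepsilon\le v_i-v_j\). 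Such a transfer keeps the pair sum fixed and moves the pair toward each other, so the product \(v_iv_j\) does not decrease and \(v_i^2+v_j^2\) does not increase.

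\emph{The jet function.} The claim is that for each fixed \(N\), the function \(x\mapsto\J_x(N)\) is Schur-concave on nonincreasing positive vectors. Since \(\J_x(N)=\min_S\sum_{\alpha\in S}\alpha\cdot x\) over \(N\)-element sets \(S\subset\ZZ_{\ge0}^n\), it is a minimum of linear functions and hence concave in \(x\). It is also symmetric in \(x\), because permuting coordinates permutes the monomials. A symmetric concave function is Schur-concave, since \(\lambda=\sum_\pi c_\pi\,\pi(v)\) is a convex combination of permutations of \(v\). Concavity then gives \(\J_\lambda(N)\ge\sum_\pi c_\pi\J_{\pi(v)}(N)=\J_v(N)\). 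This step is purely formal.

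\emph{Equality information.} Suppose equality holds in \eqref{eq:finitejet} for the actual vector. In the proof of Theorem~\ref{thm:jets} we have the chain \(\J_\sigma(N)\le\sum_js_j(b_j)\le t_DN\), so both inequalities become equalities. Equality in the first means that the chosen multiset (the \(b_j\) smallest-weight monomials of degree \(j\), for each \(j\)) consists of \(N\) monomials of smallest weight. Under the hypothesis, every such choice contains exactly \(b\) monomials of degree one, hence \(b_1=b\).

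It remains to identify \(b_1\) with the dimension of the image. At a general point \(x\), \(K_1\) consists of the sections vanishing at \(x\), and \(K_2\) of those vanishing to order two. Hence \(b_1=\rk K_1/K_2\) is the rank of the map \(W\cap\mathfrak m_x\to\mathfrak m_x/\mathfrak m_x^2\otimes\OO(D)_x\). This is the rank at \(x\) of the differential of the rational map \(\phi_{|D|}\), where \(b_0=1\) accounts for the nonvanishing of a section at a general point. Generic smoothness in characteristic zero then gives \(\dim\overline{\phi(Z)}=b_1=b\).

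The main obstacle is this last identification. One has to check that the saturation used to define \(K_j\) does not change ranks at a general point, and that \(b_0=1\) (since \(N>0\), some section is nonzero at a general point). Both should be immediate, because saturation only alters the sheaves in codimension one and the ranks are generic quantities.
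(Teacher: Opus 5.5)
Your proposal is correct and follows essentially the same route as the paper. You prove the product and sum-of-squares inequalities by convexity (Karamata or Robin Hood transfers), get Schur-concavity of \(\J_\lambda(N)\) as a symmetric infimum of linear functions, and force equality throughout the chain \(\J_\sigma(N)\le\sum_j s_j(b_j)\le t_DN\) from the proof of Theorem~\ref{thm:jets}. Your identification of \(b_1\) with the generic rank of the differentials \(d(s/s_0)\), and hence with \(\dim\im|D|\), is the paper's argument; the paper phrases it via the transcendence degree of the field generated by the ratios \(s/s_0\) rather than via generic smoothness.
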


\begin{proof}
The first and third inequalities follow from the concavity of \(\log\) and the convexity of the square. Moreover, \(\J_\lambda(N)\) is a
symmetric concave function of \(\lambda\), being the infimum of linear sums of weights over all sets of \(N\) distinct multi-indices, and is therefore Schur-concave, proving the middle inequality.

For the second assertion, in the proof of Theorem~\ref{thm:jets}, both \(\sum_js_j(b_j)-\J_\lambda(N)\) and each HN degree deficit are nonnegative, so equality forces all of them to vanish. Thus the ranks \(b_j\) select \(N\) monomials of smallest weight. Finally, choose a nonzero section \(s_0\in W\) near a general smooth point. The rank \(b_1\) of \(Q_1\) is the generic rank of the differentials \(d(s/s_0)\) for \(s\in W\), which in characteristic zero equals the transcendence degree of the field generated by these ratios, and hence the dimension of the image of \(\phi_{|D|}\). Thus $b=b_1=\rk Q_1=\mathrm{rank} (d\phi_{|D|}) = \dim \im|D|$.
\end{proof}

\subsection{The finite counting rule}\label{subsec:certificate}
For integer weights \(w=(w_1,\ldots,w_n)\), the coefficient of \(t^j\) in
\(\prod_i(1-t^{w_i})^{-1}\) counts monomials of weight \(j\).
At the first complete weight level \(j\) that cannot be included in full under the allowed average weight \(b\), let \(N\) and \(S\)
be the cumulative count and cumulative weight of all monomials of weight strictly less than \(j\). Then \(j>b\), and at most
\[
 \left\lfloor\frac{bN-S}{j-b}\right\rfloor
\]
monomials from level \(j\) may be added. Because the mean
weight up to level \(j\) tends to infinity with \(j\), this stopping rule always terminates and will be used in the finite section counts below.

\subsection{Finite covers and divisibility of determinants}

\begin{lemma}\label{lem:cover}
Let \(\pi:Y\to X\) be a finite quasi-\'etale cover of degree \(e\), where $Y$ is a normal projective variety. The normalized HN slopes of \(\TT_Y\) with respect to \(\pi^*H\) are given by \(\lambda\). Thus
\begin{equation}\label{eq:cover}
                         eV\le B(\lambda):=
                              \frac{625}{256\prod_i\lambda_i}.
\end{equation}
Therefore if \(B(\lambda)<2V\), there is no nontrivial such cover, \(\Cl(X)\) is torsion-free, and \(H\sim qA\).
\end{lemma}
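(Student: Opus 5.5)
The proof has three parts: compare the tangent sheaves of \(Y\) and \(X\), apply the volume bound of Theorem~\ref{thm:jets} on \(Y\), and then read off the consequences for \(\Cl(X)\).

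\emph{Slopes on the cover.} Since \(\pi\) is quasi-\'etale, there is a closed subset \(Z\subset X\) of codimension at least two such that \(\pi\) is \'etale over \(X\setminus Z\). There \(\TT_Y=\pi^*\TT_X\), and reflexivity gives \(\TT_Y\simeq(\pi^*\TT_X)^{**}\). For any torsion-free \(\mathcal G\) on \(X\),
\[
c_1(\pi^{[*]}\mathcal G)(\pi^*H)^3=e\,c_1(\mathcal G)H^3
\qquad\text{and}\qquad (\pi^*H)^4=eV,
\]
so normalized slopes are preserved. The HN filtration of \(\TT_X\) pulls back to a filtration of \(\TT_Y\) with the same normalized quotient slopes. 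These quotients stay semistable: the maximal destabilizing subsheaf is unique, so it descends through the Galois closure by the standard argument. Hence this pulled-back filtration is the HN filtration of \(\TT_Y\), and its slope vector is \(\lambda\).

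\emph{Volume bound.} The variety \(Y\) is klt by \cite[Proposition 5.20]{KM}, and \(K_Y=\pi^*K_X\). Apply Theorem~\ref{thm:jets} (or Proposition~\ref{prop:normaljets}) with \(P=\pi^*H=-K_Y\), which is ample Cartier, and \(n=4\). This gives \(eV=P^4\le (5/4)^4/\prod_i\lambda_i=B(\lambda)\).

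\emph{Consequences.} If \(B(\lambda)<2V\), then \(e<2\), so every quasi-\'etale cover is trivial. Now take a torsion class \(T\in\Cl(X)\) of order \(r\). The index-one cyclic cover associated with \(rT\sim0\) is quasi-\'etale of degree \(r\), as in the proof of Lemma~\ref{lem:divisorialCM}. It is connected, since \(T\) has exact order \(r\). Therefore \(r=1\), and \(\Cl(X)\) is torsion-free. Finally, \(H-qA\) is numerically trivial and hence torsion in \(\Cl(X)\): \(\rho=1\) and \(\Pic^0=0\) on this rationally connected variety. So \(H\sim qA\).

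The step most likely to be delicate is showing that pullback preserves semistability. To handle it, I would pass to the Galois closure \(Y'\to X\), which is still quasi-\'etale. On \(Y'\), uniqueness of the maximal destabilizing subsheaf makes it Galois-invariant, so it descends to \(X\). Semistability on the intermediate cover \(Y\) then follows by pulling back from \(Y\) to \(Y'\).
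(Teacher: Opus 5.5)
Your proposal is correct and follows the paper's proof essentially step for step: you pull back the HN filtration, use Galois-invariance of the maximal destabilizing subsheaf on a Galois closure to see that semistability is preserved, apply Theorem~\ref{thm:jets} on \(Y\) with \(P=\pi^*H=-K_Y\), and exclude torsion in \(\Cl(X)\) via the connected cyclic quasi-\'etale cover whose degree is the order of the class. The only cosmetic difference is the last step: since \(H\qsim qA\) by the very definition of \(q\), the class of \(H-qA\) is torsion immediately, so your detour through numerical triviality and \(\Pic^0=0\) is unnecessary but harmless.
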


\begin{proof}
We have \(K_Y=\pi^*K_X\) and \(\TT_Y=\pi^{[*]}\TT_X\) in codimension one; in particular, \(Y\) is Gorenstein
canonical Fano by \cite[Proposition 5.20]{KM}.

Slope semistability is preserved by reflexive pullback under \(\pi\): otherwise, passing to a Galois closure if necessary, the uniqueness of the maximal destabilizing subsheaf makes it Galois-invariant, so it descends over the \'etale locus and extends reflexively to a destabilizing subsheaf on \(X\), a contradiction. Since intersection numbers scale by \(e=\deg\pi\), the pulled-back HN
filtration is the HN filtration of \(\TT_Y\) and the vector \(\lambda\) of normalized HN slopes is unchanged. Applying Theorem~\ref{thm:jets} on \(Y\) gives \eqref{eq:cover}, noting that \(\rho(Y)=1\) is not needed there.

A torsion Weil class $L$ of exact order \(e>1\) defines a cyclic cover, \'etale in codimension one, by taking the normalization of its reflexive cyclic algebra, producing a normal projective variety $Y$ and a finite quasi-\'etale morphism $\pi: Y\to X$ of degree $e$. This is excluded by \eqref{eq:cover} when \(B(\lambda)<2V\). Thus \(\Cl(X)\) is torsion-free, and \(H\qsim qA\) implies \(H\sim qA\).
\end{proof}

The next lemma converts equality in the orbifold Bogomolov--Gieseker inequality into a divisibility statement in the class group.

\begin{lemma}\label{lem:flat}
Let \(X\) be projective klt with no nontrivial finite
quasi-\'etale cover. Fix an ample Cartier divisor \(H\).
If a reflexive sheaf \(\mathcal G\) of rank
\(r>1\) is slope semistable with respect to \(H\) and
\(\Disc(\mathcal G)H^{n-2}=0\), then the class
of \(\det\mathcal G\) in \(\Cl(X)/\mathrm{tors}\) is divisible
by \(r\).
\end{lemma}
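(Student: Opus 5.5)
We plan to use the known characterization of equality in the orbifold Bogomolov--Gieseker inequality: a slope semistable reflexive sheaf \(\mathcal G\) on a projective klt variety with \(\Disc(\mathcal G)H^{n-2}=0\) is, after a finite quasi-\'etale cover, projectively flat on the regular locus (Greb--Kebekus--Peternell, \cite[Section 4]{GKPflat}, with the klt/orbifold version in \cite{Liu}). Concretely, \(\mathcal G\) restricted to \(X_{\reg}\) is then given by a projectively flat connection. Its projectivization \(\PP(\mathcal G|_{X_{\reg}})\) therefore comes from a representation \(\rho:\pi_1(X_{\reg})\to \mathrm{PGL}_r(\CC)\).

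The first step is to lift \(\rho\) to \(\mathrm{SL}_r(\CC)\), possibly after a finite cover. The obstruction is a class in \(H^2(\pi_1(X_{\reg}),\mu_r)\). By the extension theorem for flat sheaves over klt spaces, we aim to show that after a finite quasi-\'etale cover the representation becomes an honest linear one. Since \(X\) has no nontrivial finite quasi-\'etale cover, finite quotients of \(\pi_1(X_{\reg})\) are trivial, so by Malcev residual finiteness the image of \(\rho\) is trivial. Indeed, \(\rho\) extends over \(X\), because by \cite{GKPflat} flat bundles on \(X_{\reg}\) of a klt space whose local fundamental groups... We will use the simpler route: a finitely generated linear group with no nontrivial finite quotient is trivial. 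The image of \(\pi_1(X_{\reg})\) in \(\mathrm{PGL}_r\) is linear (\(\mathrm{PGL}_r\) embeds in \(\mathrm{GL}_{r^2}\) via the adjoint action) and finitely generated, so it is trivial.

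Next we conclude that \(\PP(\mathcal G|_{X_{\reg}})\simeq X_{\reg}\times\PP^{r-1}\). Hence \(\mathcal G|_{X_{\reg}}\simeq \mathcal M^{\oplus r}\) for a line bundle \(\mathcal M\) on \(X_{\reg}\): a projective bundle that is trivial as a \(\PP^{r-1}\)-bundle is \(\PP(\mathcal M^{\oplus r})\), and two vector bundles with the same projectivization differ by a line-bundle twist. The line bundle \(\mathcal M\) extends to a reflexive rank-one sheaf \(\OO_X(M)\) with \(M\) a Weil divisor, because \(X\setminus X_{\reg}\) has codimension at least two. Then \(\mathcal G\simeq\OO_X(M)^{\oplus r}\), by reflexivity and agreement in codimension one. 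Therefore \(\det\mathcal G=\OO_X(rM)\) as reflexive sheaves, so its class in \(\Cl(X)\), and a fortiori in \(\Cl(X)/\mathrm{tors}\), is divisible by \(r\).

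The main obstacle is the first step: correctly invoking the klt projective-flatness theorem, and making sure it yields a projectively flat structure on all of \(X_{\reg}\) rather than only after a cover. If the cited result gives flatness only on some quasi-\'etale cover \(Y\to X\), the hypothesis forces that cover to be trivial, so the argument applies directly on \(X\). A secondary point is finite generation of \(\pi_1(X_{\reg})\), which holds since \(X_{\reg}\) is a quasi-projective variety.
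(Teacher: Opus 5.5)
\textbf{Same route as the paper, with one step that fails as written.} You follow the paper's proof: projective flatness of $\mathcal G|_{X_{\reg}}$ from \cite{GKPflat}; triviality of $\bar\rho:\pi_1(X_{\reg})\to\mathrm{PGL}_r(\CC)$ by finite generation, Mal'cev residual finiteness and the absence of quasi-\'etale covers; then a splitting $\mathcal G\simeq\mathcal L^{\oplus r}$. For the middle step, the paper spells out the Riemann existence and normalization argument that you compress into one sentence. Your digression about lifting to $\mathrm{SL}_r$ and obstruction classes in $H^2(\pi_1(X_{\reg}),\mu_r)$ is not needed. Once the projective monodromy is trivial, no lifting question arises, so that paragraph can be deleted.

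\textbf{The gap is in the last step.} Trivial monodromy of the flat $\PP^{r-1}$-bundle trivializes $\PP(\mathcal G|_{X_{\reg}})$ only holomorphically. So your $\mathcal M$ is a priori just a holomorphic line bundle on the noncompact manifold $X_{\reg}^{\mathrm{an}}$, and GAGA does not apply there. Your justification, that ``$\mathcal M$ extends to $\OO_X(M)$ because $X\setminus X_{\reg}$ has codimension at least two'', uses $\Pic(X_{\reg})=\Cl(X_{\reg})=\Cl(X)$. That identity is valid only for \emph{algebraic} line bundles. Holomorphic line bundles on $X_{\reg}^{\mathrm{an}}$ need not be algebraic and in general need not extend to coherent sheaves across a codimension-two singular locus. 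Likewise, $\mathcal G\simeq\OO_X(M)^{\oplus r}$ needs an algebraic isomorphism on $X_{\reg}$ before reflexivity can extend it.

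\textbf{The repair is the paper's argument.} Trivial projective monodromy means the flat algebra bundle $\End(\mathcal G|_{X_{\reg}})$ has trivial monodromy. Its flat sections therefore contain matrix units $e_{ij}$, which are holomorphic sections of $\mathcal{H}om(\mathcal G,\mathcal G)$ over $X_{\reg}$. This sheaf is reflexive on the normal variety $X$ and $\codim(X\setminus X_{\reg})\ge2$, so the $e_{ij}$ extend over $X$, and GAGA on the projective $X$ makes them algebraic. The idempotents $e_{ii}$ split $\mathcal G$ into rank-one reflexive summands. The maps $e_{i1}$ and $e_{1i}$ are mutually inverse isomorphisms between $e_{11}\mathcal G$ and $e_{ii}\mathcal G$. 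Hence $\mathcal G\simeq\mathcal L^{\oplus r}$ algebraically and $\det\mathcal G\simeq\mathcal L^{[r]}$. With $\mathcal M$ defined as $e_{11}\mathcal G$ in this way, the rest of your argument goes through.
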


\begin{proof}
\emph{Projective flatness.} By \cite[Proposition 1.6]{GKPflat},
\(\mathcal G|_{X_{\reg}}\) is locally free and projectively flat.
Recall that a projectively flat rank-\(r\) vector bundle on \(X_{\reg}\) is one whose associated projective bundle is flat; equivalently, after choosing a base point, parallel transport defines a projective monodromy representation
\[
\bar{\rho}: \pi_1(X_{\reg})\to \mathrm{PGL}_r(\CC).
\]
If \(\bar{\rho}\) is trivial, the only twist is scalar and \(\mathcal G|_{X_{\reg}}\simeq \mathcal L_0^{\oplus r}\) for a line bundle \(\mathcal L_0\) on \(X_{\reg}\).

\emph{Trivial projective monodromy.}
Since the fundamental group of a smooth complex quasi-projective variety is finitely generated (by finite semialgebraic triangulation \cite[Chapter 1]{Dimca}), the image of $\bar{\rho}$ is a finitely generated linear group.
By Mal'cev's residual finiteness theorem \cite{Malcev}, a nontrivial monodromy image has a nontrivial finite quotient. The associated connected finite \'etale cover \(U\to X_{\reg}\) is algebraic by the Riemann existence theorem \cite[Expos\'e XII]{SGA1}, and normalizing \(X\) in the function field of $U$ gives a nontrivial finite quasi-\'etale cover $\pi: Y\to X$, contradicting our hypothesis. Thus $\bar{\rho}$ is trivial.

\emph{Extension and determinant divisibility.}
Since $\bar{\rho}$ is trivial, parallel transport on $\mathcal{G}|_{X_\reg}$ is scalar, so the induced flat bundle $\End (\mathcal{G}|_{X_\reg})$ has trivial monodromy.
The standard matrix units $e_{ij}\in M_r(\CC)$ therefore define global holomorphic endomorphisms on $X_\reg$, which extend reflexively across the singular locus of codimension at least two and are algebraic on \(X\) by GAGA \cite{GAGA}. The diagonal idempotents $e_{ii}$ split \(\mathcal G\) into rank-one reflexive summands $\mathcal{L}_i$, and the relations $e_{ij}e_{ji}=e_{ii}$ and $e_{ji}e_{ij}=e_{jj}$ show that all $\mathcal{L}_i$ are isomorphic. Hence \(\mathcal G\simeq\mathcal L^{\oplus r}\) for a rank-one reflexive sheaf \(\mathcal L\), and \(\det\mathcal G\simeq\mathcal L^{[r]}\).
\end{proof}

\section{Geometry of low-rank foliations}\label{sec:leaves}

In this section, we establish explicit slope restrictions from foliations of rank one and two. In the terminal case, these bounds suffice to determine the sharp volume bound; in the canonical case, we combine them with the geometry of rank-three foliations in Section~\ref{sec:canonical}. 

\subsection{The family of leaves}
Let \(\FF\subset\TT_X\) be a proper HN term of rank \(r<4\), with
\(-K_{\FF}\qsim uH\). Since \(\mu_{H,\min}(\FF)>0\), Lemma~\ref{lem:integrable} shows that \(\FF\) defines an algebraically integrable foliation, and generic ampleness of \(\TT_X/\FF\)
gives \(0<u<1\). The normalized universal family of leaves gives a diagram
\[
                         X\xleftarrow{\ e\ }U\xrightarrow{\ p\ }T,
\]
where \(e\) is projective birational, \(p\) is equidimensional
with irreducible general fiber, and \(e\) restricts to the finite normalization on a general fiber \(G\) of \(p\). The construction and the log-leaf formula are recalled in \cite[Section 9]{JL} (following the numbering of \texttt{arXiv:2508.16364v3}):
\[
                 K_{e^{-1}\FF}+\Delta_U\qsim e^*K_{\FF},
                 \qquad \Delta_U\ge0,
\]
where \(\Delta_U\) is \(e\)-exceptional.

\begin{lemma}\label{lem:horizontal}
There exists a \(p\)-horizontal \(e\)-exceptional prime divisor. In particular, the normalized general leaf \(G\) has an effective boundary \(\Delta_G\) satisfying
\[
                   K_G+\Delta_G\qsim-uL_G,\qquad L_G=e|_G^*H,
\]
and some component of \(\Delta_G\) has coefficient at least one. Here the divisor \(L_G\) is ample Cartier.
\end{lemma}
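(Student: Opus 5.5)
The plan is to argue by contradiction and compare two restrictions of canonical divisors to a general fiber \(G\) of \(p\). On \(U\), the foliation \(e^{-1}\FF\) is the relative tangent sheaf of \(p\) away from codimension two, because \(p\) is equidimensional with irreducible general fibre. So \(K_{e^{-1}\FF}=K_{U/T}-R(p)\), where \(R(p)\) is the ramification divisor supported on the multiple fibers. This divisor is vertical, and its restriction to a general fiber is zero. Restricting the log-leaf formula \(K_{e^{-1}\FF}+\Delta_U\qsim e^*K_\FF\) to a general \(G\) then gives
\[
K_G+\Delta_U|_G\qsim e^*K_\FF|_G\qsim-uL_G .
\]
We set \(\Delta_G:=\Delta_U|_G\). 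It is effective because \(\Delta_U\ge0\), and it comes only from the \(p\)-horizontal components of \(\Delta_U\). The divisor \(L_G=e|_G^*H\) is ample Cartier since \(H\) is Cartier (Gorenstein) and \(e|_G\) is finite.

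Suppose now that no \(p\)-horizontal \(e\)-exceptional prime divisor exists. Then \(\Delta_G=0\), so \(-K_G\qsim uL_G\) with \(0<u<1\). I also want to show that \(e\) is then an isomorphism over general leaves in codimension one in a strong sense. To do this, I will compare \(K_U\) with \(e^*K_X\). Write \(K_U=e^*K_X+E\), where \(E\) is \(e\)-exceptional. Every horizontal component of \(E\) would be \(e\)-exceptional and \(p\)-horizontal, so \(E|_G=0\). Adjunction on the general fiber (\(K_G=K_U|_G\), since \(T\) can be taken smooth near a general point) then gives \(K_G\qsim-L_G\). Combined with \(K_G\qsim-uL_G\), this yields \((1-u)L_G\equiv0\). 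That contradicts \(u<1\) and the ampleness of \(L_G\). Hence some horizontal \(e\)-exceptional prime divisor exists.

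For the coefficient statement I plan to use the standard fact from \cite[Section 9]{JL}: for the universal family of leaves, every \(e\)-exceptional horizontal prime divisor \(E_0\) appears in \(\Delta_U\) with coefficient at least one. The reason is that \(e(E_0)\) has codimension at least two in \(X\), while \(E_0\) dominates \(T\). The leaves then all pass through a locus of smaller dimension, so \(E_0\) is not \(e^{-1}\FF\)-invariant along the exceptional direction. Its coefficient in \(\Delta_U\) is therefore the discrepancy of \(\FF\) along the non-invariant divisor \(E_0\), and I will show this is at least \(\epsilon(E_0)=1\) by the foliated analogue of the negativity computation: the foliated log canonical threshold of a non-invariant exceptional divisor over a point of the leaf-space base locus. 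Restricting to \(G\), \(E_0|_G\) is then a nonzero divisor, because \(E_0\) is horizontal, and each of its components has coefficient at least one.

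The main obstacle is making the first step rigorous: identifying \(K_U|_G\) with \(K_G\) and comparing \(E|_G\) with \(\Delta_U|_G\) without assuming \(U\) is \(\QQ\)-Gorenstein. To handle this I would pass to a general complete-intersection curve in \(G\) avoiding the codimension-two loci, and compute only degrees against \(L_G^{r-1}\). That suffices for the numerical contradiction \((1-u)L_G^{r}=0\).
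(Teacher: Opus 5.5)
Your proposal is correct. For the existence of a horizontal exceptional divisor, however, it takes a genuinely different route from the paper.

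\textbf{Existence step.} The paper's argument is topological. By $\mathbb{Q}$-factoriality the exceptional locus of $e$ is a union of divisors $E_i$. If all of them were $p$-vertical, a general fiber $G$ would lie in the isomorphism locus of $e$. For a general divisor $B\subset T$ avoiding $t$ and the images $p(E_i)$, the pushforward $e_*p^{-1}(B)$ would then be a nonzero effective divisor on $X$ disjoint from the positive-dimensional $e(G)$. This is impossible, since $\rho(X)=1$ makes every such divisor ample.

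You instead compute $K_G$ in two ways in codimension one on $G$:
\begin{itemize}
\item from $K_U=e^*K_X+E$ you get $K_G\sim -L_G$;
\item from the log-leaf formula, using $K_{e^{-1}\FF}=K_{U/T}-R(p)$ with $R(p)$ vertical, you get $K_G\sim_{\mathbb Q}-uL_G$.
\end{itemize}
With no horizontal exceptional divisor, both $E|_G$ and $\Delta_U|_G$ vanish. This forces $(1-u)L_G\equiv 0$, contradicting $u<1$ and the ampleness of $L_G$.

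So you replace ``every nonzero effective divisor on $X$ is ample'' with the positivity of $c_1(N_{\FF})=K_{\FF}-K_X\equiv(1-u)H$ on leaves, which comes from generic ampleness of $\mathcal T_X/\FF$. Your route would work whenever $K_X-K_{\FF}$ is negative on general leaves, and it never needs the exceptional locus of $e$ to be divisorial. The paper's route needs no canonical-class bookkeeping on the possibly singular $U$ and no information about $u$.

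Your handling of that bookkeeping is adequate. You work on the open set of $G$ where $U$ and $p$ are smooth. Its complement has codimension at least two in $G$: the components of $\operatorname{Sing}(U)$ dominating $T$ meet a general fiber in codimension at least two, and generic smoothness handles the rest. You then compare degrees against $L_G^{r-1}$.

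\textbf{Coefficient bound.} Here you end up exactly where the paper does: it is a cited fact, namely \cite[Proposition 3.5]{JLL}. The justification you sketch in its place (non-invariance ``along the exceptional direction'', a ``foliated log canonical threshold'') is not an argument, and should simply be replaced by that citation. Also, a $p$-horizontal divisor is automatically non-invariant for a foliation whose leaves are the fibers of $p$, so the invariance discussion is unnecessary.

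The remaining parts agree with the paper: effectivity of $\Delta_G$ from $\Delta_U\ge 0$, and $L_G$ ample Cartier because $e|_G$ is finite.
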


\begin{proof}
Since \(X\) is \(\QQ\)-factorial, the exceptional locus of the projective birational morphism \(e\) is purely divisorial, so $e$ is an isomorphism away from the union of its exceptional divisors \(E_i\).

If all \(e\)-exceptional divisors were \(p\)-vertical, a general fiber \(G= p^{-1}(t)\) of \(p\) would lie in the isomorphism locus of \(e\). Choose
a general effective Cartier divisor $B$ on \(T\) avoiding $t$ and all images $p(E_i)$. Since $p^{-1}(B)$ contains no $e$-exceptional divisors, its pushforward $D:= e_*p^{-1}(B)$ is a nonzero effective divisor on \(X\) disjoint from $e(G)$. Because \(X\) is \(\QQ\)-factorial with $\rho(X)=1$, $D$ is an ample \(\QQ\)-Cartier divisor, contradicting $D\cap e(G)=\varnothing$.

For a \(p\)-horizontal \(e\)-exceptional prime divisor $E$, its coefficient in
\(\Delta_U\) is at least one by \cite[Proposition 3.5]{JLL}. Restricting to a general fiber \(G\) gives the stated boundary $\Delta_G$. Finally, since $e|_G: G\to e(G)$ is finite and $H|_{e(G)}$ is ample Cartier, \(L_G\) is ample Cartier.
\end{proof}

\begin{proposition}[Models of a general leaf]\label{prop:leafmodels}
Let \(G\), \(L_G\), and \(\Delta_G\) be as in
Lemma~\ref{lem:horizontal}.
\begin{enumerate}
\item On a simultaneous smooth resolution \(f:F\to G\) of the
family, \(L=f^*L_G\) is nef and big Cartier and
\begin{equation}\label{eq:ambient}
 K_F+L\sim R,\qquad R\ge0\text{ integral}.
\end{equation}
The divisor defined by \(K_F+\Delta=f^*(K_G+\Delta_G)\)
is effective away from the exceptional locus of \(f\); here the exceptional coefficients may be negative.
\item For \(r\le2\), a relative terminal model of the family
has a smooth general fiber \(F_{\rm term}\to G\), on which
\begin{equation}\label{eq:surfaceidentities}
 \begin{gathered}
 L\text{ is nef and big Cartier},\qquad K_{F_{\rm term}}+L\sim R,\\
 K_{F_{\rm term}}+\Delta\qsim-uL,
 \qquad R\ge0\text{ integral},\quad\Delta\ge0.
 \end{gathered}
\end{equation}
A component of \(\Delta\) has coefficient at least one and positive
integral \(L^{r-1}\)-degree. If the ambient variety \(X\) is
terminal, then additionally
\begin{equation}\label{eq:support}
 \Supp\Delta\subseteq\Supp R.
\end{equation}
\end{enumerate}

\end{proposition}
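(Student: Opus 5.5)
The proof splits into the two parts of the statement. In each, the divisor \(R\) comes from comparing canonical divisors of the leaf with the ambient canonical divisor, and the boundary \(\Delta\) comes from pulling back the log-leaf formula of \cite[Section 9]{JL}.

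\emph{Part (1).} Take a resolution \(\widetilde U\to U\) that is smooth over a dense open subset of \(T\); its general fiber \(F\) is a smooth resolution \(f:F\to G\). Let \(g:\widetilde U\to X\) be the composite. Since \(X\) is canonical and Gorenstein, we can write
\[
K_{\widetilde U}=g^*K_X+E_{\widetilde U}
\]
with \(E_{\widetilde U}\ge0\) an integral divisor. Because \(F\) is a general fiber, adjunction gives \(K_F=(K_{\widetilde U})|_F\), so
\[
K_F+L\sim R:=E_{\widetilde U}|_F\ge0 .
\]
The divisor \(L=f^*L_G\) is the pullback of \(H\), hence Cartier; it is nef and big because \(F\to X\) is generically finite onto its image. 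Pulling back \(K_G+\Delta_G\) defines \(\Delta\) on \(F\), and off \(\operatorname{Exc}(f)\) we have \(\Delta=\Delta_G\ge0\).

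\emph{Part (2).} For \(r\le2\), I would run a relative MMP over \(T\) (or over \(U\)) to obtain a relative terminal model of the family. For \(r=1\), the general fiber is a normal curve, hence smooth, so \(F_{\rm term}=G\). For \(r=2\), the general fiber is a terminal surface, hence smooth.

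The identity \(K_{F_{\rm term}}+L\sim R\) with \(R\ge0\) should survive this step. Pushing forward from the resolution can only contract components of \(E\), and the discrepancies of \(X\) are measured on any model, so effectivity is preserved. The identity \(K+\Delta\qsim-uL\) is the restriction of the log-leaf formula. The point still to check is \(\Delta\ge0\) on the terminal model. Here I would choose the model as the relative minimal model of \(K_{F}+\Delta^{+}\) over \(G\), or, for surfaces, simply contract \((-1)\)-curves where \(\Delta\) is negative. The negativity lemma then ensures that the pushforward of \(K+\Delta\), which is pulled back from \(G\), has effective boundary. Alternatively one can take the minimal resolution of \(G\): there \(K_F\le f^*K_G\), so \(\Delta\ge f^*\Delta_G\) in exceptional coefficients, and this is \(\ge0\).

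The component with coefficient at least one comes from Lemma~\ref{lem:horizontal}. It is the strict transform of the horizontal exceptional divisor \(E\). Its \(L^{r-1}\)-degree is positive and integral: \(L\) is Cartier, and the image of this component in \(X\) has dimension \(r-1\) because it lies in \(e(E)\) and is not contracted along the leaf.

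\emph{Terminal inclusion.} Suppose \(X\) is terminal. Every prime divisor on \(\widetilde U\) that is \(g\)-exceptional then has positive integral discrepancy, so its coefficient in \(E_{\widetilde U}\) is at least one. The components of \(\Delta\) come from \(\Delta_U\), which is \(e\)-exceptional, and hence lie in \(\Supp E\). Restricting to the general fiber gives \(\Supp\Delta\subseteq\Supp R\).

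The main obstacle is keeping \(\Delta\ge0\) and \(R\) integral simultaneously on the terminal model. I expect to handle it by choosing the minimal resolution over \(G\) in the surface case, so that exceptional curves are \(K\)-nef and the negativity lemma applies.
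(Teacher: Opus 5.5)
Your Part (1) is the paper's argument. In Part (2) you reach \(\Delta\ge0\) by a different and more elementary route, and it is correct.

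\emph{How the two routes differ.} The paper works on the fourfold. On the relative terminal model \(U_{\rm term}\to U\), \(K_{U_{\rm term}}\) is nef over \(U\), so \(-\Delta_{U_{\rm term}}\) is nef over \(U\). The negativity lemma then makes the horizontal part of \(\Delta_{U_{\rm term}}\) effective, and smoothness of the general fibre comes from a dimension count on the singular locus.

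You work on the surface instead. On the minimal resolution \(f:F_{\rm min}\to G\) one has \(K_{F_{\rm min}}\le f^*K_G\), so \(\Delta=f^*\Delta_G+(f^*K_G-K_{F_{\rm min}})\ge0\). The facts that \(R\ge0\) is integral and that \(\Supp\Delta\subseteq\Supp R\) descend by pushforward from the resolution fibre \(F\), which factors through \(F_{\rm min}\). No fourfold MMP is needed, and smoothness is automatic.

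\emph{What you should make explicit.} The statement concerns the general fibre of a relative terminal model of the family. The later appeal to Theorem 9.3 of Jiang--Liu in Proposition~\ref{prop:canonicalslopes} also needs that specific surface. So you should state that the two surfaces coincide. A curve in \(F_{\rm term}\) contracted to a point of \(G\) is contracted by \(U_{\rm term}\to U\), and a general fibre has trivial normal bundle. Hence \(K_{U_{\rm term}}\) nef over \(U\) gives \(K_{F_{\rm term}}\) nef over \(G\), and so \(F_{\rm term}\) is the minimal resolution of \(G\).

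\emph{Two small corrections.}
\begin{enumerate}
\item The MMP must be run over \(U\) (or \(X\)), not over \(T\). Over \(T\) it ends in a Mori fibre space, which loses the morphism to \(G\) and the Cartier divisor \(L\).
\item In the terminal inclusion you only treat components of \(\Delta\) coming from \(\Delta_U\). But \(\Delta\) also has components exceptional over \(G\); these are the curves tracked in the blowup analysis of Lemma~\ref{lem:strictsurface}. They lie in divisors exceptional over \(U\), hence exceptional over \(X\). So when \(X\) is terminal they have discrepancy at least one and lie in \(\Supp R\). The paper's phrasing, that every horizontal component of \(\Delta_{U_{\rm term}}\) is exceptional over \(X\), covers both kinds of component at once.
\end{enumerate}
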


\begin{proof}
Choose a resolution \(\widetilde U\to U\) of the universal family and shrink the base \(T\) so that its smooth fibers resolve the normal general leaf \(G\). With \(\mu:\widetilde U\to X\), write
\[
 K_{\widetilde U}=\mu^*K_X+E_X.
\]
Since \(X\) is canonical and Gorenstein, \(E_X\) is an effective integral divisor. On a general fiber \(F\), the base
canonical divisor and the divisorial ramification of the family
restrict trivially, so adjunction gives
\(K_F+L\sim E_X|_F=:R\), proving (1). Any prime divisor on \(F\)
not exceptional over \(G\) has the same log-leaf coefficient as
its image in \(\Delta_G\ge 0\).

For (2), we use the relative terminal model
\(\pi:U_{\rm term}\to U\) of \cite[Lemma 9.2]{JL}, which is \(\QQ\)-factorial terminal with \(K_{U_{\rm term}}\) nef
over \(U\). Over a dense open subset of \(T\), the log-leaf formula gives
\[
 -\Delta_{U_{\rm term}}\text{ nef over }U,
 \qquad\pi_*\Delta_{U_{\rm term}}=\Delta_U\ge0,
\]
and the negativity lemma \cite[Lemma 3.39]{KM} implies that the \(p\)-horizontal
part of \(\Delta_{U_{\rm term}}\) is effective; note that this effectiveness holds on the relative minimal model \(U_{\rm term}\), rather than on an arbitrary resolution.

Since the terminal fourfold \(U_{\rm term}\) has singular locus of dimension at most one and the base \(T\) has dimension
\(4-r\ge2\), a general fiber \(F_{\rm term}\) avoids the singular locus and is smooth by generic
smoothness. Restricting the integral ambient discrepancy formula of \(U_{\rm term}\) to \(F_{\rm term}\) gives the effective integral divisor \(R\), and birationality of \(F_{\rm term}\to G\) makes \(L\) nef and big Cartier.
The component \(D\) of \(\Delta_G\) with coefficient at least one from
Lemma~\ref{lem:horizontal} has strict transform \(\widetilde D\) on \(F_{\rm term}\) with the same coefficient and
\[
 L^{r-1}\cdot\widetilde D=L_G^{r-1}\cdot D\in\ZZ_{>0}.
\]
Finally, every horizontal component of \(\Delta_{U_{\rm term}}\) is exceptional over \(X\).
When \(X\) is terminal, every such component has positive integral ambient discrepancy and thus lies in
\(\Supp R\), proving \eqref{eq:support}.
\end{proof}

In the rank-two surface arguments below, we denote the relative terminal general fiber \(F_{\rm term}\) from Proposition~\ref{prop:leafmodels}(2) simply by $F$.

\subsection{Canonical low-rank bounds}

\begin{lemma}\label{lem:surfacecoeff}
On a canonical del Pezzo surface \(S\), every coefficient of an
effective \(B\qsim-K_S\) is at most six. For \(K_S^2=7,8\)
the bound is four, and for \(K_S^2=9\) it is three.
\end{lemma}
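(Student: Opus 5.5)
Pass to the minimal resolution \(\pi:\widetilde S\to S\). Since \(S\) is canonical, \(K_{\widetilde S}=\pi^*K_S\), so \(\widetilde S\) is a weak del Pezzo surface of the same degree \(d=K_S^2\in\{1,\dots,9\}\). Write \(B=\sum b_iB_i\) and pull back: \(\pi^*B=\sum b_i\widetilde B_i+E\) with \(E\ge0\) exceptional, and \(\pi^*B\qsim-K_{\widetilde S}\). It suffices to bound the coefficient \(b\) of a single curve \(\Gamma=\widetilde B_1\) with \(b\,\Gamma\le\pi^*B\), where \(\Gamma\) is not \(\pi\)-exceptional, so \(\Gamma\cdot(-K_{\widetilde S})\ge1\).

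\textbf{Basic estimate.} Intersecting with the nef class \(-K_{\widetilde S}\) gives \(b\,(-K_{\widetilde S}\cdot\Gamma)\le d\), hence \(b\le d\). This already proves the claim for \(d\le 6\), and leaves \(d=7,8,9\).

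\textbf{The remaining degrees.} For these I would run through the short list of weak del Pezzo surfaces case by case.

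\emph{Degree \(9\).} Here \(S=\PP^2\) and \(-K_S=\OO(3)\), so an effective divisor in \(|\OO(3)|\) has coefficients at most \(3\).

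\emph{Degree \(8\).} The surface is \(\PP^1\times\PP^1\), \(\mathbb F_1\), or the quadric cone \(\PP(1,1,2)\). On \(\PP^1\times\PP^1\), \(-K=(2,2)\) and every curve has class \((a,c)\ne0\) with \(a,c\ge0\), so \(b\le2\). On \(\mathbb F_1\), \(-K=2\sigma+3f\) with \(\sigma^2=-1\); every curve class is \(\alpha\sigma+\gamma f\) with \(\alpha,\gamma\ge0\). The coefficient of \(\sigma\) is therefore at most \(2\), and that of \(f\) is at most \(3\). A curve with \(\alpha\ge1\) other than \(\sigma\) has \(\gamma\ge\alpha\), so its coefficient is at most \(2\). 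On \(\PP(1,1,2)\), write \(-K=\OO(4)\). Curves have degree at least \(1\) in the weighted sense, and the minimal curves are lines through the vertex, of class \(\tfrac12\OO(2)\)-ish, i.e.\ \(\OO(1)\). Thus the coefficient is at most \(4\), and this bound is attained by \(4\ell\) for a ruling \(\ell\).

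\emph{Degree \(7\).} The surfaces are the blow-up of \(\PP^2\) at two points and its degeneration with an \(A_1\) point. Writing classes as \(\alpha h-\sum e_i\)-type combinations with \(-K=3h-e_1-e_2\), the worst case is a ruling line or fiber of coefficient at most \(4\), attained on the \(A_1\) model by four times a line through the singular point.

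I would organize these cases uniformly through the cone of curves. Since \(\pi^*B-b\Gamma\) is effective, intersecting with nef classes \(N\) (the extremal rays of the nef cone, namely pullbacks of \(\OO(1)\) and of the ruling classes) gives \(b\le(-K\cdot N)/(\Gamma\cdot N)\). One then chooses \(N\) with \(\Gamma\cdot N\ge1\).

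\textbf{Main obstacle.} The bookkeeping in degrees \(7\) and \(8\) for the singular models is the real work: one has to make sure the nef test classes really bound the coefficient of every non-exceptional curve by \(4\). I expect this to be a finite check on explicit toric-type surfaces.
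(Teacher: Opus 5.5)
Your arguments for \(K_S^2\le6\), \(K_S^2=9\) and \(K_S^2=8\) are correct and agree with the paper's. For degree eight, the paper works on the minimal resolutions \(\mathbb F_0,\mathbb F_1,\mathbb F_2\), where the effective cone is spanned by \(\sigma,f\) and \(-K=2\sigma+(e+2)f\). On \(\PP(1,1,2)\), the clean version of your remark is that \(\Cl=\ZZ\,\OO(1)\), so a curve \(C\in|\OO(k)|\), \(k\ge1\), with \(bC\le B\) forces \(4-bk\ge0\).

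The gap is degree seven. There you only assert that the worst coefficient is four, and you defer the check as ``the real work''. This is exactly the case where the bound four is sharp (on the \(A_1\) model), so the assertion is the whole content of the statement for \(K_S^2=7\).

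Your nef-test-class method does close it, but you must exhibit the test classes. The rule ``choose \(N\) with \(\Gamma\cdot N\ge1\)'' is not enough; you need \(-K\cdot N\le4\,\Gamma\cdot N\).

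\emph{Smooth model.} On \(\mathrm{Bl}_{p_1,p_2}\PP^2\) the nef cone is spanned by \(h,h-e_1,h-e_2\), with \(-K\)-degrees \(3,2,2\). Take \(N=h-e_i\) for \(\Gamma=e_i\) and \(N=h\) for every other curve; this gives \(b\le3\).

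\emph{\(A_1\) model.} Here \(p_2\) is infinitely near \(p_1\). On the minimal resolution, the curves \(E_2=e_2\), \(E_1'=e_1-e_2\), \(\ell=h-e_1-e_2\) span the effective cone, and \(h,\ h-e_1,\ 2h-e_1-e_2\) span the nef cone. Every non-exceptional \(\Gamma\ne E_2\) has \(\Gamma\cdot h\ge1\), so \(b\le3\). But \(E_2\) is orthogonal to both \(h\) and \(h-e_1\), so you must use \(N=2h-e_1-e_2\). This has \(E_2\cdot N=1\) and \(-K\cdot N=4\), giving \(b\le4\). The bound is attained, since \(-K_{\widetilde S}=3\ell+2E_1'+4E_2\).

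The paper avoids this computation. \(\widetilde S\) has at least two \((-1)\)-curves, so you can contract one different from \(\Gamma\). The pushforward of \(\pi^*B\) is then an effective \(\QQ\)-divisor \(\qsim-K\) on \(\mathbb F_0\), \(\mathbb F_1\) or \(\mathbb F_2\). The image of \(\Gamma\) keeps its coefficient there, and the degree-eight bound applies.
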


\begin{proof}
Since \(-K_S\) is ample Cartier, if \(bD\le B\) with
\(D\) prime, then \(b\le b(-K_S)D\le K_S^2\), which proves the bound for \(K_S^2\le 6\). For \(K_S^2=7,8\), the minimal resolution \(\widetilde S\to S\) is a smooth weak del Pezzo surface.
By the classification of rational surfaces
\cite[Chapter V]{Beauville}, in degree eight \(\widetilde S\) is a Hirzebruch surface \(\mathbb F_e\) (\(0\le e\le2\)) with
\(-K_{\mathbb F_e}=2\sigma+(e+2)f\). Since the effective cone of \(\mathbb F_e\) is generated by
\(\sigma\) and \(f\), every prime curve in an effective anticanonical \(\QQ\)-divisor has coefficient at most four, and pulling \(B\) back crepantly to \(\widetilde S\) preserves the coefficient of its strict transform. In degree seven, \(\widetilde S\) is the
blowup of \(\PP^2\) at two (possibly infinitely near) points and has two distinct \((-1)\)-curves; contracting one distinct from the chosen component reduces to the degree-eight case. Finally, if $K_S^2=9$, then \(S\simeq\PP^2\), giving the bound three.
\end{proof}

\begin{proposition}\label{prop:canonicalslopes}
For a rank-one foliation \(\FF\) with \(\mu_{H,\min}(\FF)>0\), one has \(u\le1/2\). For a
rank-two foliation \(\FF\) with \(\mu_{H,\min}(\FF)>0\), one has \(u\le5/6\).
Consequently
\begin{equation}\label{eq:canonicalslopes}
                   \lambda_1\le\frac12,\qquad
                   \lambda_1+\lambda_2\le\frac56.
\end{equation}
If the HN filtration of \(\TT_X\) has no term of rank three, then
\(\lambda_4\ge1/12\).
\end{proposition}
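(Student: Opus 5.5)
We work on the normalized general leaf \(G\) from Lemma~\ref{lem:horizontal}. Here \(K_G+\Delta_G\qsim-uL_G\), the divisor \(L_G\) is ample Cartier, and \(\Delta_G\) has a component \(D_0\) with coefficient \(d_0\ge1\). Since \(\Delta_G\ge0\), this gives \(-K_G\qsim uL_G+\Delta_G\ge uL_G+d_0D_0\).

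\emph{Rank one.} Here \(G\) is a rationally connected curve, so \(G\simeq\PP^1\) by Lemma~\ref{lem:integrable}. Taking degrees gives \(2=u\deg L_G+\deg\Delta_G\ge u\deg L_G+1\). Since \(\deg L_G\) is a positive integer, this yields \(u\le1\), which is too weak. We need \(\deg L_G\ge2\), or better, to use the horizontal exceptional component more carefully. The plan is to show that \(\deg\Delta_G\ge1\) forces \(u\deg L_G\le1\), and then to rule out \(\deg L_G=1\) with \(u>1/2\). If \(\deg L_G=1\), a general leaf is a line for \(H\). Lines through a general point then sweep out \(X\). Because \(\Delta_G\) contains the horizontal exceptional divisor and \(e\) collapses it, all leaves pass through a common locus, and a family of \(H\)-lines through a fixed point should give \(-K\cdot\ell\) too small for \(\TT_X|_\ell\). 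Bend-and-break, or the fact that the leaves meet along \(e(E)\), should force \(\deg\Delta_G\ge2\) in this case, or else \(\deg L_G\ge2\). Either way we get \(u\le1/2\). This is the step I expect to be the main obstacle: proving that equality-type configurations cannot give \(u\) larger than \(1/2\).

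\emph{Rank two.} Work on the smooth relative terminal fiber \(F\) of Proposition~\ref{prop:leafmodels}(2). Run a \(K_F\)-MMP, or more simply pass to a model where \(L\) becomes ample. On a canonical del Pezzo model \(S\) of \(G\), write \(-K_S\qsim uL+\Delta\) with a coefficient \(d_0\ge1\) on a curve \(D_0\) of integral positive \(L\)-degree. Dividing by \(u\), the divisor \(L+\Delta/u\qsim-K_S/u\) shows that \(-K_S-\Delta\qsim uL\) is ample. In particular \(\Delta\), hence \(d_0D_0\), is strictly dominated by an anticanonical divisor. We then need a numerical inequality such as \(uL\cdot(-K_S)+D_0\cdot(-K_S)\le K_S^2\), together with a comparison of \(L\) and \(-K_S\) from \(K_F+L\sim R\ge0\). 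The latter gives \(L\le -K\) up to effective, so \(L^2\le K_S^2\). With the coefficient bounds of Lemma~\ref{lem:surfacecoeff}, a case check over \(K_S^2\in\{1,\dots,9\}\) should give \(u\le 5/6\); for example, the worst case \(\PP^2\) with \(L=\OO(1)\) gives \(3=u+\deg\Delta\le\)\,… and needs careful handling. Then \eqref{eq:canonicalslopes} follows because \(c_1(\FF)\equiv(\lambda_1+\dots+\lambda_r)H\) for the HN term of rank \(r\), so \(u=\sum_{i\le r}\lambda_i\). For the polygon bound at ranks where there is no break, use convexity of the HN polygon.

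\emph{Last claim.} If there is no break at rank three, the filtration has breaks only at ranks one and/or two, or none. If there are no breaks, all \(\lambda_i=1/4\). Otherwise \(\lambda_3=\lambda_4\), so \(\lambda_4=(1-\lambda_1-\lambda_2)/2\ge(1-5/6)/2=1/12\), using \eqref{eq:canonicalslopes}.
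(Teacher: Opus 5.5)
\textbf{Verdict: genuine gaps in both foliation bounds.} Your last step is fine: no break at rank three gives $\lambda_3=\lambda_4$, hence $\lambda_4=(1-\lambda_1-\lambda_2)/2\ge 1/12$. The passage from bounds on $u$ to \eqref{eq:canonicalslopes} is also fine.

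Both gaps share one missing ingredient: the integral ambient discrepancy identity $K_F+L\sim R$ with $R\ge0$ integral, from Proposition~\ref{prop:leafmodels}. This is exactly where the Gorenstein canonical hypothesis on $X$ enters.

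\textbf{Rank one.} Here $F=G\simeq\PP^1$, so the identity gives $\deg L=2+\deg R\ge2$ at once. Combined with your $u\deg L=2-\deg\Delta\le1$, this yields $u\le1/2$. Your substitute (lines through a common locus, bend-and-break) is not an argument. All leaves pass through $e(E)$, which may lie in the singular locus of $X$, so general leaves need not be free curves in $X_{\reg}$. Moreover, a general point lies on a unique leaf, so there is no two-pointed family to break.

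\textbf{Rank two.} You posit a canonical del Pezzo model $S$ of $G$ on which $L$ is controlled by $-K_S$, but this is precisely what must be proved. Your comparison also runs the wrong way. The relation $K_F+L\sim R\ge0$ means $L\sim-K_F+R$ dominates $-K_F$; for instance $L=\OO(1)$ on $\PP^2$ cannot occur, since there $L\ge\OO(3)$. So it gives no upper bound on $L^2$, and the case check is never actually carried out.

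The missing argument runs as follows.
\begin{itemize}
\item Assume $u>5/6$ and use the birational morphism $F\to\mathbb F_e$ with $e\in\{1,2\}$ from \cite[Theorem 9.3]{JL}.
\item Intersect the pushed-forward identities with a ruling $f$ to get $2\ge u(2+R_0f)$, so $R_0f=0$.
\item Effectivity of $\Delta_0$ then forces $R_0\equiv0$. Hence $L_0\equiv-K_{\mathbb F_e}$ and $d=L^2\le8$.
\item Now $(1-u)d=(R+\Delta)\cdot L<4/3$, with $R\cdot L\in\ZZ_{\ge0}$ and $\Delta\cdot L\ge1$. This forces $R\cdot L=0$ and $d\in\{7,8\}$.
\item So $R$ is exceptional over $G$ and $K_G\sim-L_G$. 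The negativity lemma then makes $G$ a canonical Gorenstein del Pezzo surface of degree $7$ or $8$.
\item Only now does Lemma~\ref{lem:surfacecoeff} apply. The divisor $\Delta_G/(1-u)\qsim-K_G$ has a coefficient $\ge1/(1-u)>6$, contradicting the bound $4$ in these degrees.
\end{itemize}

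You are right that the general coefficient bound $6$ matches $u\le5/6$. But the lemma cannot be invoked until you have established $R\cdot L=0$, i.e.\ $L_G\sim-K_G$, and that step needs the bound on $d$.
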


\begin{proof}
In rank one, the normalized general leaf is \(\PP^1\), and
\eqref{eq:surfaceidentities} gives
\[
                    \deg L=2+\deg R\ge2,\qquad
                    u\deg L=2-\deg\Delta\le1,
\]
so \(u\leq 1/2\).

In rank two, suppose for contradiction that \(u>5/6\). By \cite[Theorem 9.3]{JL}, the relative terminal leaf model \(F\)
admits a birational morphism \(\phi:F\to\mathbb F_e\) with
\(e=1\) or \(2\). Push \eqref{eq:surfaceidentities} forward to
\(\mathbb F_e\), denoting the pushed-forward divisors by a subscript zero.
The divisor \(L_0\) is nef: at each point blowup,
\(L=\phi^*L_0-bE\) with \(b=LE\ge0\), so intersecting with strict transforms of curves downstairs shows that $L_0$ is nef.
Intersecting with a ruling fiber \(f\) (where $K_{\mathbb{F}_e}f=-2$, $(K_{\mathbb{F}_e}+L_0)f=R_0f$, and $(K_{\mathbb{F}_e}+\Delta_0)f=-uL_0f$) gives
\[
                            2\ge u(2+R_0f).
\]
Since \(u>5/6\) and \(R_0f\in\ZZ_{\ge 0}\), we have \(R_0f=0\).
Thus \(R_0\equiv af\) with \(a\in\ZZ_{\ge0}\), and $L_0\equiv 2\sigma+(e+2+a)f$. Effectivity of
\[
          \Delta_0\equiv -K_{\mathbb{F}_e}-uL_0 \equiv 2(1-u)\sigma+
                         ((e+2)(1-u)-ua)f
\]
forces \(a<1\), so \(a=0\), \(L_0\equiv-K_{\mathbb F_e}\),
and \(d:=L^2\le L_0^2=8\).
On \(F\), we now have
\[
                       (1-u)d=(R+\Delta)L<4/3,
                       \qquad \Delta L\ge1.
\]
Hence the nonnegative integer \(RL\) is zero, and $(1-u)d=\Delta L \geq 1$ with $u>5/6$ gives \(d=7\) or \(8\).
Since \(RL=0\), all components of \(R\) are exceptional over \(G\). Pushing
\eqref{eq:ambient} down to \(G\) gives \(K_G\sim-L_G\), so \(K_G\)
is Cartier, and the negativity lemma gives \(K_F-f^*K_G=R\ge 0\). Thus \(G\) is a
canonical Gorenstein del Pezzo surface of degree \(d\in\{7,8\}\). However, since
\(\Delta_G\qsim(1-u)(-K_G)\) has a component of coefficient at least one, $B:= \frac{1}{1-u}\Delta_G\sim_{\mathbb{Q}}-K_G$ has a component of coefficient at least $\frac{1}{1-u}>6$, contradicting Lemma~\ref{lem:surfacecoeff}.

Whenever the HN filtration of \(\TT_X\) has a term of rank one or two, the bounds just proved apply directly.
If there is no term of rank one, the first HN block has rank at least two, so $\lambda_1=\lambda_2$ and positivity of the slopes gives \(\lambda_1<1/2\). If there is no term of rank two, either the first block has
rank at least three (so $\lambda_1=\lambda_2=\lambda_3\le 1/3$ and \(\lambda_1+\lambda_2\le2/3\)), or a rank-one block is followed by a block of rank at least two (so $\lambda_2=\lambda_3\leq \frac{1-\lambda_1}{2}$ and
\(\lambda_1+\lambda_2\le(1+\lambda_1)/2\le3/4\)).
This proves \eqref{eq:canonicalslopes}.
Finally, if there is no term of rank three, the last HN block has rank two, three, or four, which gives
\(\lambda_4\ge1/12,1/6,\) or \(1/4\), respectively.
\end{proof}

\subsection{The strict terminal surface inequality}
In the terminal case, \eqref{eq:support} strengthens the rank-two surface argument.

\begin{lemma}\label{lem:strictsurface}
There is no smooth rational surface satisfying
\eqref{eq:surfaceidentities}--\eqref{eq:support}, with a boundary
coefficient at least one, for \(u=2/3\).
\end{lemma}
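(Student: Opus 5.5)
Suppose for contradiction that $F$ is a smooth rational surface carrying $L$, $R$, $\Delta$ as in \eqref{eq:surfaceidentities}--\eqref{eq:support} with $u=2/3$, and with a component $D_0$ of $\Delta$ of coefficient $\delta_0\ge1$ and $LD_0\in\ZZ_{>0}$. The plan is to rerun the rank-two argument of Proposition~\ref{prop:canonicalslopes} at the boundary value $u=2/3$, and to use the support condition \eqref{eq:support} to exclude the cases that survive there.

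The first step is to combine the two identities. We have $R-\Delta\qsim L+\tfrac23L\cdot$ no: precisely $R-\Delta\qsim (1+u)L\cdot$ is wrong sign, so compute $K_F\sim R-L$ and $K_F\qsim-\Delta-\tfrac23L$. This gives
\[
 R+\Delta\qsim\tfrac13L,\qquad R,\Delta\ge0,
\]
so $L(R+\Delta)=d/3$ with $d=L^2>0$. By \eqref{eq:support}, $D_0\subseteq\Supp R$, and $R$ is integral. Hence $R\ge D_0$ and the coefficient of $D_0$ in $R+\Delta$ is at least $1+\delta_0\ge2$. Therefore $d/3\ge 2LD_0\ge2$, that is, $d\ge6$. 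The same computation caps the positive part: the total $L$-degree $d/3$ must absorb every component.

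The second step is to use $F$ rational to pin down $L$, following Proposition~\ref{prop:canonicalslopes}. I would run a $K$-MMP $\phi:F\to F_0$ down to $\PP^2$ or $\mathbb F_e$, pushing $L,R,\Delta$ forward as there. Then $L_0$ is nef and $\tfrac13L_0\qsim R_0+\Delta_0$ is effective with $R_0$ integral. On a ruling $f$ of $\mathbb F_e$ we get $2=\tfrac23L_0f+\Delta_0f$ and $L_0f=2+R_0f$, hence
\[
 \Delta_0f=\tfrac{2-2R_0f}{3}\ge0,
\]
so $R_0f\in\{0,1\}$. On $\PP^2$ with $L_0\sim l\,\ell$ we get $l=3+\deg R_0$ and $\deg\Delta_0=3-\tfrac23l\ge0$, so $l\le4$. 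In each case this gives a short list of possible $(L_0,R_0)$.

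The third step is to verify, in each case of the list, the inequality $(1+\delta_0)LD_0\le d/3\le L_0^2/3$ together with $D_0\le R$, using integrality of $R$ and $LD_0$. If $D_0$ is contracted by $\phi$, I would work with its total transform instead. For example, on $\PP^2$ with $l=4$ we have $R_0$ a line and $\Delta_0=\tfrac13\cdot$ line, which is forced. Then $D_0$ must be that line, with $\delta_0=1/3<1$, a contradiction. If $D_0$ is exceptional, the blowups lower $d$ below $6$. The $\mathbb F_e$ cases are treated the same way, with $R_0f=1$ forcing $\Delta_0f=0$, so that $\Delta_0$ is a sum of fibers whose coefficients are bounded by $\sigma$-degree counts.

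The main obstacle I expect is this bookkeeping under the blowups $\phi$. The divisors $R$ and $\Delta$ on $F$ differ from the pullbacks of $R_0$ and $\Delta_0$ by exceptional terms that may be negative for $\Delta$. I would control these by intersecting with each exceptional $(-1)$-curve $E$. Since $L E\ge0$, $K_FE=-1$ and $RE\ge0$, we get $\Delta E=-\tfrac23LE+1$, which bounds the multiplicities, and this should close the case analysis.
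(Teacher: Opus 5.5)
Your first two steps are sound. The relation \(R+\Delta\qsim\frac13L\) and the bound \(L^2\ge6\) are correct. The classification on the minimal model also works, once you add two points: nefness of \(L_0\) forces \(e\le2\), and \eqref{eq:support} rules out \(R_0f=0\) and \(a=0\). With these, it shows that any component of \(\Delta\) that is \emph{not} exceptional over \(\PP^2\) or \(\mathbb F_e\) has coefficient at most \(1/3\).

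That is the easy half of the lemma. There is a genuine gap in the other half: excluding a component \(D_0\) of coefficient \(\ge1\) that is exceptional over the minimal model. Your proposal has no working argument there.

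\begin{itemize}
\item The inequality \((1+\delta_0)LD_0\le L^2/3\le L_0^2/3\) gives nothing. In the surviving cases \(L_0^2\in\{15,16,18\}\), while the left side is only forced to be \(\ge2\).
\item The assertion that ``the blowups lower \(d\) below \(6\)'' is unsupported. Nothing you wrote bounds \(\operatorname{mult}_p\Delta\) at the centers. Even at a center with \(\operatorname{mult}_p\Delta\le1/3\), creating a coefficient \(\ge1\) only requires \(L'E\ge3\). That drops \(L^2\) by \(9\), for example from \(18\) to \(9\), which is still \(\ge6\).
\item The control via \((-1)\)-curves fails. First, \(RE\ge0\) is not automatic: adjunction gives \(RE=LE-1\), so it is equivalent to \(LE\ge1\), which is part of what must be proved. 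Second, the identity \(\Delta E=1-\frac23LE\) constrains only an intersection number, not \(\operatorname{coeff}_E\Delta\). The other components of \(\Delta\) meeting \(E\) contribute positively with no a priori bound. Third, the exceptional curves on \(F\), possibly including \(D_0\), need not be \((-1)\)-curves.
\end{itemize}

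What is missing is inductive control of the coefficients, one blowup at a time. At a point blowup, set \(b=L'E\), \(r=\operatorname{mult}_pR\) and \(\delta=\operatorname{mult}_p\Delta\). The new coefficients are \(r'=r+1-b\) and \(\delta'=\delta+\frac23b-1\). Three conditions then become sharp constraints: effectivity, support containment (\(r'\ge1\) whenever \(\delta'>0\)), and nefness (\(b\le LC\) for every component \(C\) of \(\Supp R\) through \(p\)).

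Applied to the explicit configurations on \(\mathbb F_e\), these constraints show that every admissible first blowup yields \(\Delta=\frac13C_*\). After that, the paper maintains the following invariant:
\begin{itemize}
\item \(\Delta=\frac13C_*\), with \(C_*\) smooth and of \(R\)-coefficient one;
\item \(\Supp R\) has simple normal crossings;
\item every other component of \(\Supp R\) has \(L\)-degree at most one.
\end{itemize}
This invariant forces each later blowup to have \(b=1\) and to be centered on \(C_*\). The new curve then gets \(\Delta\)-coefficient zero, and the invariant persists. This is what shows that no coefficient of \(\Delta\) ever exceeds \(1/3\). Without an argument of this kind, your outline does not exclude an exceptional \(D_0\).
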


\begin{proof}
We track the integral multiplicities of \(R\) and the coefficients of \(\Delta\) throughout the argument.

\smallskip\noindent
\emph{Step 1: Contraction to a ruled surface.}
If \(F=\PP^2\), put \(r=\deg R\). The nonzero boundary and
the support condition \eqref{eq:support} give \(r\ge1\). The two canonical identities
give \(\deg\Delta=1-2r/3\le1/3\), so no component of \(\Delta\) can have
coefficient at least one.

Otherwise, choose a birational morphism \(F\to\mathbb F_e\) (stopping at \(\mathbb F_1\) if a minimal model is \(\PP^2\)). Pushing the divisors forward through the sequence
of point contractions preserves the canonical identities, the integrality and effectivity of \(R\), and the effectivity and support containment \(\Supp\Delta_0\subseteq\Supp R_0\). Nefness of \(L\) also descends:
at each contraction \(\beta:S'\to S\),
\(L'=\beta^*L-bE\) with \(b=L'E\ge0\), so for any curve
\(C\subset S\),
\(LC=L'\widetilde C+b\operatorname{mult}_pC\ge0\).

\smallskip\noindent
\emph{Step 2: Classification of the initial divisors.}
Let \(f\) be a ruling fiber and \(\sigma\) a minimal section on \(\mathbb F_e\) with
\(\sigma^2=-e\). If \(R_0f=0\), all components of \(R_0\) (and hence of \(\Delta_0\)) are
vertical, so \(\Delta_0f=0\), which would give
\(u=1\). Thus \(R_0f\ge1\), and
\[
 \frac23=\frac{2-\Delta_0f}{2+R_0f}
\]
forces \(R_0f=1\) and \(\Delta_0f=0\). Hence
\[
 R_0\equiv\sigma+af,\qquad
 L_0\equiv3\sigma+(e+2+a)f,\qquad
 \Delta_0\equiv\frac{e+2-2a}{3}f.
\]
Effectivity and nefness imply
\(a\in\ZZ_{\ge0}\), \(2a\le e+2\), and \(a\ge2e-2\), leaving five pairs $(e,a)$:
\[
 (e,a)=(0,0),(0,1),(1,0),(1,1),(2,2).
\]
When \(a=0\), \(R_0=\sigma\) has no vertical component whereas
\(\Delta_0\) is nonzero and vertical, contradicting \(\Supp\Delta_0\subseteq\Supp R_0\).

Since \(R_0f=1\), \(R_0\) has a unique horizontal prime component of multiplicity one, which is a section. Any section distinct from \(\sigma\)
has class \(\sigma+bf\) with \(b\ge e\), and the remaining vertical components of \(R_0\)
are fibers whose multiplicities sum to \(a-b\). Together with \(\Supp\Delta_0\subseteq\Supp R_0\), this gives the exhaustive list of actual divisors in Table~\ref{tab:surface}.

\begin{table}[htbp]
\centering\small
\caption{Initial configurations in the strict surface argument.
Each section denoted by \(C\) is smooth.}\label{tab:surface}
\begin{tabularx}{\textwidth}{@{}cYcl@{}}
\toprule
\((e,a)\)&\(R_0\)&\(\Delta_0\)&Degrees of $\Supp R_0$ against \(L_0\)\\
\midrule
\((0,1)\)&\(C\sim\sigma+f\)&\(0\)&\(L_0C=6\)\\
\((0,1)\)&\(\sigma+f_0\)&\(0\)&\(3,3\)\\
\((1,1)\)&\(\sigma+f_0\)&\(f_0/3\)&\(1,3\)\\
\((2,2)\)&\(C\sim\sigma+2f\)&\(0\)&\(L_0C=6\)\\
\((2,2)\)&\(\sigma+f_1+f_2\), \(f_1\ne f_2\)&\(0\)&\(0,3,3\)\\
\((2,2)\)&\(\sigma+2f_0\)&\(0\)&\(0,3\)\\
\bottomrule
\end{tabularx}
\end{table}
For example, when \((e,a)=(1,1)\), \(\Delta_0\equiv f/3\) forces a vertical component in its support, which excludes an
irreducible section \(R_0\sim\sigma+f\) and forces
\(R_0=\sigma+f_0\) and \(\Delta_0=f_0/3\).
In every configuration of Table~\ref{tab:surface}, the reduced support of $R_0$ has simple normal crossings with smooth irreducible components (in the final row, the fiber $f_0$ has multiplicity two in \(R_0\)).

\smallskip\noindent
\emph{Step 3: The first blowup.}
At a point blowup \(\beta:S'\to S\) centered at $p$ with exceptional curve \(E\), set
\[
 L'=\beta^*L-bE,\qquad b=L'E\in\ZZ_{\ge0},\qquad
 r=\operatorname{mult}_pR,\quad
 \delta=\operatorname{mult}_p\Delta.
\]
Let $R'$ and $\Delta'$ denote the corresponding divisors on $S'$ with $\beta_*R'=R$ and $\beta_*\Delta'=\Delta$, satisfying
\[
K_{S'}+L'\sim R',\quad K_{S'}+\Delta'\sim_{\mathbb{Q}}-\frac{2}{3}L'.
\]
Using \(K_{S'}=\beta^*K_S+E\) and writing $R'=\bar{R}+r'E$ and $\Delta'=\bar{\Delta}+\delta'E$, where $\bar{R}$ and $\bar{\Delta}$ are the strict transforms, the coefficients of \(E\) are given by
\begin{equation}\label{eq:blowuprule}
 r'=r+1-b,\qquad \delta'=\delta+\frac23b-1.
\end{equation}
Effectivity requires \(r',\delta'\ge 0\), support containment
requires \(r'\ge1\) whenever \(\delta'>0\), and nefness of \(L'\) requires \(b\le LC\) for each smooth
component \(C\) of \(\Supp R\) passing through \(p\).

When \(\Delta=0\), \(\delta'\ge0\) forces \(b\ge2\);
thus \(\delta'>0\), and support containment forces
\(b\le r\). A point lying on a single reduced component of \(R_0\) has
\(r=1\) and cannot be blown up. In the \((0,1)\) reducible case \(R_0=\sigma+f_0\),
the only admissible center is the intersection point \(\sigma\cap f_0\),
with \(r=b=2\), which yields
\(\Delta'=E/3\), \(\operatorname{coeff}_E R'=1\),
\(L'E=2\), and \(L'\)-degree one on both strict transforms \(\bar{\sigma},\bar{f}_0\).
In the \((2,2)\) cases with \(R_0\ge\sigma\), a center on \(\sigma\) is impossible since \(L_0\sigma=0\) forces \(b=0\). Thus neither \(C\sim\sigma+2f\) nor \(\sigma+f_1+f_2\) (\(f_1\neq f_2\)) admits a first
blowup. For \(R_0=\sigma+2f_0\), a center on \(f_0\setminus\sigma\) has
\(r=b=2\), producing \(\Delta'=E/3\), \(\operatorname{coeff}_E R'=1\), and \(L'E=2\), while \(\bar{f}_0\) retains multiplicity two and now has \(L'\)-degree one.

\smallskip\noindent
\emph{Step 4: The induction invariant.}
Once \(\Delta\) is nonzero (including the initial \((1,1)\)
case), the following conditions hold:
\[
 \begin{gathered}
 \Delta=\tfrac13 C_*,\qquad
 C_*\text{ smooth},\qquad \operatorname{coeff}_{C_*}R=1,\\
 \Supp R\text{ has simple normal crossings},\qquad
 LC\le1\text{ for every component }C\neq C_*\text{ of }\Supp R.
 \end{gathered}
\]
Consider the next blowup $\beta:S'\to S$ at a point $p\in S$, where $\delta\in\{0,1/3\}$.
If \(b=0\), \eqref{eq:blowuprule} gives \(\delta'<0\). If \(b\ge2\), the center \(p\) cannot lie on any component $D\neq C_*$ of $\Supp R$ (as nefness of $L'$ gives $1\ge LD=bE\bar{D}+L'\bar{D}\geq b\ge 2$), nor only on \(C_*\) (where \(r=1\), giving \(r'=2-b\le0\) while \(\delta'>0\)), nor outside \(\Supp R\) (where \(r=0\)).

Thus \(b=1\). Nonnegativity of \(\delta'\) then forces \(p\in C_*\) (\(\delta=1/3\)), giving \(\delta'=0\) and \(r'=r\ge 1\). The new exceptional curve $E$ satisfies $L'E=1$, while for every old component $C\neq C_*$, $L'\bar{C}=LC-E\bar{C}\leq LC\leq 1$ and its multiplicity in $R'$ is unchanged; the strict transform \(\bar{C}_*\) still has coefficient \(1/3\) in \(\Delta'\) and \(1\) in \(R'\). Blowing up a point on \(C_*\) preserves simple normal crossings of \(\Supp R\), so all invariant conditions persist under subsequent blowups. In particular, no coefficient of $\Delta$ can exceed \(1/3\), contradicting the existence of a component with coefficient at least one.
\end{proof}

\begin{proposition}\label{prop:terminalslopes}
If \(X\) is terminal, a rank-one foliation \(\FF\) with \(\mu_{H,\min}(\FF)>0\) has
\(u\le1/3\), and a rank-two foliation \(\FF\) with \(\mu_{H,\min}(\FF)>0\)
has \(u<2/3\). In particular,
\begin{equation}\label{eq:terminalslopes}
                     \lambda_1\le\frac13,\qquad
                     \lambda_1+\lambda_2<\frac23.
\end{equation}
\end{proposition}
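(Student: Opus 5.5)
The plan is to rerun the surface arguments of Proposition~\ref{prop:canonicalslopes}, now on the relative terminal leaf models of Proposition~\ref{prop:leafmodels}(2), and to use the terminal support condition \eqref{eq:support} to push each bound down. For a proper HN term \(\FF\) of rank \(r\), we have \(c_1(\FF)H^3=uV\), so \(u=\lambda_1+\dots+\lambda_r\). Hence the slope inequalities \eqref{eq:terminalslopes} will follow from the bounds on \(u\) plus bookkeeping on the HN blocks.

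\emph{Rank one.} The general leaf is \(F=\PP^1\), and \eqref{eq:surfaceidentities} gives
\[
\deg L=2+\deg R,\qquad u\deg L=2-\deg\Delta.
\]
By Proposition~\ref{prop:leafmodels}(2), \(\Delta\) has a component of coefficient at least one and positive integral degree, so \(\deg\Delta\ge1\). By \eqref{eq:support}, that point lies in \(\Supp R\), so \(\deg R\ge1\). Therefore \(\deg L\ge3\) and \(u\le1/(\deg L)\le1/3\).

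\emph{Rank two.} Assume \(u\ge2/3\). First suppose \(F\simeq\PP^2\), and write \(r=\deg R\), so that \(u(3+r)=3-\deg\Delta\).
\begin{itemize}
\item The boundary component of coefficient at least one has positive integral degree, so \(\deg\Delta\ge1\).
\item By \eqref{eq:support}, \(r\ge1\).
\end{itemize}
Together these give \(u\le2/4=1/2\), a contradiction.

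Otherwise, exactly as in Step~1 of Lemma~\ref{lem:strictsurface}, contract \(F\) to some \(\mathbb F_e\), stopping at \(\mathbb F_1\) if necessary. This pushforward preserves the identities, effectivity, integrality of \(R_0\), nefness of \(L_0\), and \(\Supp\Delta_0\subseteq\Supp R_0\). Intersecting with a ruling fiber \(f\) gives
\[
u=\frac{2-\Delta_0f}{2+R_0f}.
\]
If \(R_0f=0\), then by support containment \(\Delta_0\) is vertical, so \(u=1\). This contradicts \(u<1\), which holds by generic ampleness of \(\TT_X/\FF\). Hence \(R_0f\ge1\), and so \(u\le2/3\). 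Equality would force \(u=2/3\) exactly, with all the hypotheses of Lemma~\ref{lem:strictsurface} satisfied, including a boundary coefficient at least one. That lemma rules this out, so \(u<2/3\).

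\emph{Slopes.} I would treat \(\lambda_1\le1/3\) and \(\lambda_1+\lambda_2<2/3\) separately, according to the block structure.
\begin{itemize}
\item If there is a break at rank one, then \(\lambda_1=u_1\le1/3\).
\item If instead the first block has rank two, then \(2\lambda_1=u_2<2/3\).
\item If the first block has rank at least three, then \(3\lambda_1\le1\).
\item If there is a break at rank two, then \(\lambda_1+\lambda_2=u_2<2/3\).
\item If a rank-one block is followed by a longer block, then \(\lambda_2<\lambda_1\le1/3\), so \(\lambda_1+\lambda_2<2/3\).
\item If the first block has rank three, then \(3\lambda_1<1\) because \(\lambda_4>0\), so \(2\lambda_1<2/3\).
\item If the first block has rank four, then \(\lambda_1+\lambda_2=1/2\).
\end{itemize}

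\emph{Main obstacle.} The delicate point is the equality case \(u=2/3\). It is entirely absorbed by Lemma~\ref{lem:strictsurface}, so the main thing to verify is that the pushforward to \(\mathbb F_e\) keeps \(\Delta_0\) nonzero or else yields \(u=1\). In the \(\PP^2\) case, one must also check that the boundary component of coefficient at least one is not contracted; this holds because \(F=\PP^2\) itself is the model, with no contraction.
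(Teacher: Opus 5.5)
Your proposal is correct and follows the paper's proof essentially step by step. In rank one, the support condition \eqref{eq:support} gives $\deg R\ge1$. In rank two, the $\PP^2$ case gives $u\le1/2$, \eqref{eq:support} on $\mathbb F_e$ rules out $R_0f=0$ and so gives $u\le2/3$, and Lemma~\ref{lem:strictsurface} removes $u=2/3$. The only difference is cosmetic: you split $\lambda_1+\lambda_2<2/3$ directly into block cases, whereas the paper only needs to exclude $\lambda_1=\lambda_2=1/3$; both are valid.
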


\begin{proof}
For a rank-one foliation, the general leaf is \(\PP^1\) and \eqref{eq:support} gives \(\deg R\ge 1\), so \(\deg L= 2+\deg R\ge3\) and \(u\deg L\le1\), giving $u\le 1/3$. For a rank-two foliation, if the smooth leaf model \(F\) is \(\PP^2\) with line class $h$, then $R\sim rh$ with $r\in\ZZ_{\ge 1}$, $L\sim (r+3)h$, and $\Delta\equiv (3-u(r+3))h$; since $\deg \Delta\geq 1$, we get $u\leq \frac{2}{r+3}\leq 1/2$. Otherwise, \(F\) maps birationally to \(\phi:F\to \mathbb{F}_e\) as in Proposition~\ref{prop:canonicalslopes}, with $u(2+R_0f)=2-\Delta_0f\leq 2$. Here $R_0f\geq 1$: indeed, if $R_0f=0$, then $R_0$ has no horizontal component, so \eqref{eq:support} implies that $\Delta_0$ also has no horizontal component and $\Delta_0f=0$, which would give $u=1$, contradicting $0<u<1$. Thus $u\leq\frac{2}{2+R_0f}\leq2/3$, and Lemma~\ref{lem:strictsurface} excludes $u=2/3$.

If the first HN block of \(\TT_X\) has rank two, its slope is less than
\(1/3\); if it has rank at least three, positivity of the
remaining slopes gives the same bound. Thus \(\lambda_1\le1/3\).
If \(\lambda_1+\lambda_2=2/3\), then $\lambda_2\leq \lambda_1\leq 1/3$ forces \(\lambda_1=\lambda_2=1/3\). Since the four positive slopes sum to one, this block cannot have rank three or four, so \(\TT_X\) would have a rank-two HN term with \(u=2/3\), which is excluded above. Hence $\lambda_1+\lambda_2<2/3$.
\end{proof}

\section{Weil Riemann--Roch and the sharp terminal bound}\label{sec:terminal}

\subsection{Cartierness and divisibility}

\begin{lemma}[Divisibility from the non-Cartier locus]\label{lem:divisibility}
Let \(Z\) be a normal projective \(n\)-fold, let \(H\) be ample
Cartier, and let \(A\) be an integral \(\QQ\)-Cartier Weil divisor
with \(H\sim_{\QQ}qA\), \(q\in\ZZ_{>0}\). If
\(\dim\NCart(A)\le s\), where \(0\le s\le n-2\), then
\[
 q^{n-s-1}\mid H^n.
\]
If \(A\) is Cartier everywhere, then \(q^n\mid H^n\).
\end{lemma}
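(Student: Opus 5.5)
The idea is to cut down by general members of a very ample multiple of \(H\) until the non-Cartier locus of \(A\) disappears, and then compute \(H^n\) on the resulting subvariety, where \(A\) restricts to an honest Cartier divisor.

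First, if \(A\) is Cartier everywhere, then \(H\) and \(qA\) are numerically equivalent Cartier divisors. Hence \(H^n=q^n A^n\), and \(A^n\in\ZZ\) because \(A\) is Cartier on a projective variety. This proves the last assertion.

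Now assume only \(\dim\NCart(A)\le s\). Set \(k=s+1\) and choose \(m\gg0\) so that \(mH\) is very ample. Take general members \(M_1,\dots,M_k\in|mH|\). By Bertini's theorem their intersection \(W=M_1\cap\dots\cap M_k\) is a normal projective variety of dimension \(n-k=n-s-1\ge1\). Since \(k>s\), \(W\) misses \(\NCart(A)\), so \(\OO_X(A)\) is locally free along \(W\). Its restriction \(A_W:=A|_W\) is therefore a Cartier divisor on \(W\), and \(H|_W\equiv qA_W\).

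Computing on \(W\) gives
\[
m^kH^n=(H|_W)^{n-k}=q^{n-k}A_W^{\,n-k},
\]
with \(A_W^{n-k}\in\ZZ\). So \(q^{n-s-1}\mid m^{s+1}H^n\).

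To remove the factor \(m^{s+1}\), I would use two coprime values \(m\) and \(m'\) for which \(mH\) and \(m'H\) are both very ample, for instance consecutive large integers. Applying the argument with mixed cuts (\(i\) members of \(|mH|\) and \(s+1-i\) members of \(|m'H|\)) gives \(q^{n-s-1}\mid m^i m'^{\,s+1-i}H^n\) for every \(i\). Since \(\gcd(m^{s+1},m'^{\,s+1})=1\), some integer combination of \(m^{s+1}\) and \(m'^{\,s+1}\) equals \(1\), and therefore \(q^{n-s-1}\mid H^n\).

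The main point to check is the Bertini step: general members of a very ample system avoid a closed set of dimension at most \(s\) once \(s+1\) of them are intersected, and they preserve normality. The cleanest route is to take each cut general in a basepoint-free system. Each cut then drops the dimension of \(\NCart(A)\cap(\text{previous cuts})\) by one, until that intersection is empty after \(s+1\) cuts.

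Normality of \(W\) is not actually needed. It suffices that \(W\) is a projective scheme of pure dimension \(n-s-1\) on which \(A|_W\) is Cartier, since intersection numbers of Cartier divisors on such a scheme are integers. This avoids any delicacy about Bertini for normality.
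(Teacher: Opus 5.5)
Your proposal is correct and follows the paper's proof: cut by \(s+1\) general members of \(|mH|\) to miss \(\NCart(A)\), deduce \(q^{n-s-1}\mid m^{s+1}H^n\) from the integrality of \((A|_W)^{n-s-1}\), and remove the factor \(m^{s+1}\) using the coprimality of \(m^{s+1}\) and \((m+1)^{s+1}\); the everywhere-Cartier case is handled by \(H^n=q^nA^n\). The mixed cuts are superfluous, since your B\'ezout step uses only the two pure cases \(i=0\) and \(i=s+1\).
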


\begin{proof}
For every sufficiently large integer \(m\), the complete intersection \(Z_m\) of \(s+1\)
general members of \(|mH|\) avoids \(\NCart(A)\), so the top self-intersection of the restricted line bundle \(A|_{Z_m}\) is an integer:
\[
 (A|_{Z_m})^{n-s-1}=
 \frac{m^{s+1}H^n}{q^{n-s-1}}\in\ZZ.
\]
Since \(m^{s+1}\) and \((m+1)^{s+1}\) are coprime,
\(H^n/q^{n-s-1}\in\ZZ\), proving $q^{n-s-1}\mid H^n$. If \(A\) is Cartier everywhere, \(H^n=q^nA^n\) directly gives \(q^n\mid H^n\).
\end{proof}

\begin{lemma}\label{lem:finiteCartier}
If \(X\) is terminal, every integral \(\QQ\)-Cartier Weil divisor
is Cartier outside finitely many points. More generally, whenever
the primitive divisor \(A\) has this property, one has
\begin{equation}\label{eq:qcube}
                              q^3\mid V.
\end{equation}
\end{lemma}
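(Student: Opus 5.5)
The statement has two parts, and the divisibility \eqref{eq:qcube} is a direct application of Lemma~\ref{lem:divisibility}. So the real content is the first assertion. For that I would follow the reference in the introduction to Kawamata's Cartierness theorem in dimension three, and combine it with a cutting argument.

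\emph{Reduction to a threefold.} Let \(M\) be an integral \(\QQ\)-Cartier Weil divisor on the terminal Gorenstein fourfold \(X\). The terminal locus is singular only in codimension at least three, so \(\operatorname{Sing}X\) has dimension at most one, and \(\NCart(M)\subseteq\operatorname{Sing}X\). Suppose \(\NCart(M)\) contains a curve \(\Gamma\). Take a general member \(Y\in|mH|\) for \(m\gg0\), which is possible since \(H=-K_X\) is ample Cartier.
\begin{itemize}
\item By the Bertini-type statement for terminal singularities (general hyperplane sections of terminal varieties are terminal, \cite[Lemma 5.17]{KM}), \(Y\) is a terminal threefold.
\item \(Y\) is Gorenstein, being a Cartier divisor in a Gorenstein variety.
\item \(Y\) meets \(\Gamma\) in finitely many points.
\item \(M|_Y\) is a well-defined \(\QQ\)-Cartier Weil divisor on \(Y\), since \(Y\) is general and not contained in \(\operatorname{Sing}X\).
\end{itemize}

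\emph{Applying Kawamata's theorem.} On a terminal Gorenstein threefold, Kawamata's theorem (Kawamata, ``Crepant blowing-up of 3-dimensional canonical singularities'', Lemma 5.1) says every \(\QQ\)-Cartier Weil divisor is Cartier. Hence \(M|_Y\) is Cartier at the points of \(Y\cap\Gamma\).

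\emph{Lifting Cartierness back to \(X\).} This is the step I expect to be the main obstacle. Cartierness of the restriction must be lifted back to \(X\) near \(Y\cap\Gamma\). The intended tool is the criterion that a rank-one reflexive sheaf \(\OO_X(M)\) is invertible near a point if its restriction to a general Cartier divisor through that point is invertible. This criterion needs two ingredients:
\begin{itemize}
\item \(\OO_X(M)\) is Cohen--Macaulay, which holds by Lemma~\ref{lem:divisorialCM}.
\item \(\OO_X(M)|_Y\) is reflexive and equals \(\OO_Y(M|_Y)\). This follows from the exact sequence \(0\to\OO_X(M-Y)\to\OO_X(M)\to\OO_X(M)|_Y\to0\) together with depth considerations: \(\OO_X(M)|_Y\) is CM, hence \(S_2\), and agrees with \(\OO_Y(M|_Y)\) in codimension one.
\end{itemize}
Then \(\OO_X(M)\otimes k(x)\) is one-dimensional at the points \(x\in Y\cap\Gamma\), so \(\OO_X(M)\) is invertible there by Nakayama. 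This contradicts \(\Gamma\subseteq\NCart(M)\). Hence \(\NCart(M)\) is finite.

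\emph{The divisibility.} Suppose now that \(A\) is Cartier outside a finite set, so \(\dim\NCart(A)\le0\). Apply Lemma~\ref{lem:divisibility} with \(n=4\) and \(s=0\), which gives \(q^3\mid H^4=V\); this is \eqref{eq:qcube}. In the terminal case the hypothesis on \(A\) holds by the first assertion.
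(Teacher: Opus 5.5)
Your proposal is correct and follows the paper's argument: cut down to a terminal Gorenstein threefold section, apply Kawamata's Lemma 5.1 there, lift Cartierness back to \(X\) using the Cohen--Macaulay property of \(\OO_X(M)\) and Nakayama's lemma, and then obtain \(q^3\mid V\) from Lemma~\ref{lem:divisibility} with \(n=4\), \(s=0\). The only difference is cosmetic: you take a global general member \(Y\in|mH|\), which meets \(\Gamma\) nontrivially by ampleness, whereas the paper takes a local transverse three-dimensional section at a general point of each one-dimensional singular stratum.
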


\begin{proof}
A terminal variety is smooth in codimension two. At a general point of any one-dimensional singular stratum, a transverse three-dimensional local section is terminal and Gorenstein. By Kawamata's Cartierness theorem \cite[Lemma 5.1]{Kawamata}
(see also \cite[Proposition 2.1]{Prokhorov}), every \(\QQ\)-Cartier Weil divisor on a terminal Gorenstein threefold is Cartier. Since any divisorial sheaf on \(X\) is Cohen--Macaulay
\cite[Corollary 5.25]{KM}, its restriction to the transverse section is reflexive and coincides with the divisorial sheaf of the restricted Weil divisor.
Local freeness on the transverse section lifts to \(X\) by Nakayama's
lemma, proving Cartierness at the general point of every
one-dimensional singular stratum.

The divisibility \eqref{eq:qcube} follows from
Lemma~\ref{lem:divisibility} with \(n=4\) and \(s=0\).
\end{proof}

\begin{lemma}[Weil Riemann--Roch recurrence]\label{lem:Weilrecurrence}
Suppose that \(A\) is Cartier outside finitely many points and
that \(H\sim qA\) is an actual linear equivalence. For
\(F(n)=\chi(X,\OO_X(nA))\), one has, for every integer \(n\),
\begin{equation}\label{eq:Weilrecurrence}
 F(n)-F(n-q)=\frac{Vn^3}{6q^3}+\frac{Cn}{12q}.
\end{equation}
\end{lemma}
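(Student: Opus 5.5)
We compute the difference \(F(n)-F(n-q)\) by comparing the sheaves \(\OO_X(nA)\) and \(\OO_X((n-q)A)\simeq\OO_X(nA)\otimes\OO_X(-H)\). The latter isomorphism uses \(H\sim qA\) and the fact that \(H\) is Cartier. Choose a general member \(S\in|mH|\) for \(m\gg0\), or more directly a general smooth-in-codimension-enough member of a very ample multiple. To avoid divisibility issues we will not restrict to a single member of \(|H|\). Instead we use that \(F\) is a polynomial in \(n\) on each residue class modulo the local indices, and compute its leading behaviour away from the finite set \(\NCart(A)\).

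The first step is the structure of \(F\). Write \(F(n)=\chi(X,\OO_X(nA))\). By the singular Riemann--Roch formula for Weil divisors on a variety whose non-Cartier locus is a finite set \(\Sigma=\NCart(A)\), we have \(F(n)=P(n)+c(n)\). Here \(P\) is a polynomial of degree \(4\) computed as if \(A\) were Cartier, from \(\widetilde X\), i.e.\ from \(f^*A\) on a resolution minimal in codimension two. The term \(c(n)\) is a sum of local contributions at the points of \(\Sigma\). These contributions depend only on the local class of \(nA\) in the local class groups. Since \(qA\sim H\) is Cartier, each local class is annihilated by \(q\), so \(c(n)\) is \(q\)-periodic. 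Hence \(F(n)-F(n-q)=P(n)-P(n-q)\), and it suffices to know \(P\). Concretely, I would justify the periodicity by comparing \(\OO_X(nA)\) with \(\OO_X((n-q)A)\otimes\OO_X(H)\). These agree, and the correction sheaves supported on \(\Sigma\) have lengths depending only on \(n\bmod q\). One way to see this is to push forward from a resolution and observe that \(R^if_*\) of the relevant twists are skyscraper sheaves on \(\Sigma\) with \(q\)-periodic lengths, because twisting by \(f^*H\) does not change them locally. This locality and periodicity is the step I expect to need the most care.

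The second step computes \(P\) by Hirzebruch--Riemann--Roch on \(\widetilde X\) with \(D=n\,f^*A\), where \(f^*A\equiv f^*H/q\) numerically in codimension \(\le2\). The relation \(K_{\widetilde X}=-f^*H+E\) holds with \(E\) having centers of codimension \(\ge3\), so \(E\) intersects trivially with the classes involved, exactly as in Lemma~\ref{lem:cartierRR}. This gives
\[
P(n)=\frac{V}{24q^4}n^4+\frac{V}{12q^3}n^3+\frac{V+C}{24q^2}n^2+\frac{C}{24q}n+\mathrm{const}.
\]
The linear coefficient is \(\tfrac1{24}c_2\cdot K\)-type, i.e.\ \(\tfrac{1}{24}c_2(\widetilde X)\cdot(-K)\cdot D\). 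Here \(c_1c_2\cdot D\) becomes \(C/q\), consistent with Lemma~\ref{lem:cartierRR} at \(n=mq\). There \(P(mq)\) must reproduce \eqref{eq:cartierRR} up to the constant, and this gives a consistency check.

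The third step is the finite difference. A direct expansion gives
\[
P(n)-P(n-q)=\frac{V}{24q^4}\bigl(4qn^3-6q^2n^2+4q^3n-q^4\bigr)+\frac{V}{12q^3}\bigl(3qn^2-3q^2n+q^3\bigr)+\frac{V+C}{24q^2}(2qn-q^2)+\frac{C}{24}.
\]
Collecting terms, the \(n^2\) terms cancel, the \(n\) terms give \(\tfrac{V}{6q^3}\cdot q\cdot n\)-type contributions that combine with the others, and the constants cancel. A short check of the \(n\)-coefficient gives \(\tfrac{V}{6q}-\tfrac{V}{4q}+\tfrac{V}{12q}+\tfrac{C}{12q}=\tfrac{C}{12q}\), and the constant gives \(-\tfrac V{24}+\tfrac V{12}-\tfrac V{24}-\tfrac C{24}+\tfrac C{24}=0\). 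The result is \eqref{eq:Weilrecurrence}.
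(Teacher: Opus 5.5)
\textbf{There is a gap in Step 1, though your route can be completed.}

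Your Steps 2 and 3 are correct once \(P(n)\) is read as the formal expression \(\int_{\widetilde X}e^{nf^*A}\,\mathrm{td}(\widetilde X)\), where \(f^*A=\frac1q f^*(qA)\). Since \(P(mq)=\chi(X,\OO_X(mH))\) for all \(m\in\ZZ\), you get \(P(n)=P_H(n/q)\), where \(P_H\) is the polynomial in \eqref{eq:cartierRR}. Your finite-difference algebra is then right. Read the coefficients off Lemma~\ref{lem:cartierRR} this way rather than term by term. The termwise linear coefficient contains \(E\cdot c_2(\widetilde X)\cdot f^*H\), and this is not visibly zero when \(E\) has components lying over curves.

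The gap is Step 1, and it is equivalent to the lemma itself: \(F(n)-F(n-q)=P(n)-P(n-q)\) holds if and only if \(F-P\) is \(q\)-periodic. You derive this from an unspecified ``singular Riemann--Roch formula'' for Weil divisors with finite non-Cartier locus. That is not a standard citable result here, since the singular locus of \(X\) may be a surface along which \(X\) is transversally Du Val. Your resolution sketch does not supply it, for two reasons.

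\emph{First}, the exceptional locus of \(f\) lies over the whole singular locus. So the higher direct images are supported on \(\NCart(A)\) only for a specific choice of sheaf:
\begin{itemize}
\item you must push forward \(\OO_{\widetilde X}(\lfloor nf^*A\rfloor)\), whose \(f_*\) is \(\OO_X(nA)\) and which equals \(f^*\OO_X(nA)\) over \(X\setminus\NCart(A)\);
\item you then need the rationality of the singularities of \(X\) to kill \(R^if_*\) away from \(\NCart(A)\).
\end{itemize}

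\emph{Second}, even after this you only learn that \(F(n)-\chi(\widetilde X,\OO_{\widetilde X}(\lfloor nf^*A\rfloor))\) is \(q\)-periodic. This Euler characteristic is not \(P(n)\): Hirzebruch--Riemann--Roch computes it for \(nf^*A-\Theta_n\), where \(\Theta_n=\{nf^*A\}\) varies with \(n\). The missing observation is that \(\Theta_n\) is supported on divisors contracted to points of \(\NCart(A)\), and \(f^*H\) is numerically trivial on these. Hence
\[
 \chi\bigl(\widetilde X,\OO_{\widetilde X}(\lfloor nf^*A\rfloor)\bigr)
 =P(n)+\int_{\widetilde X}\bigl(e^{-\Theta_n}-1\bigr)\,\mathrm{td}(\widetilde X),
\]
and the error term is \(q\)-periodic. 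This is a second, independent use of the finiteness of \(\NCart(A)\). Without it, Step 1 merely restates what is to be proved.

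With these two additions your route is a valid proof, and it makes the periodic local corrections explicit. It differs from the paper's proof, which takes the path you set aside:
\begin{itemize}
\item For \(m\gg0\), a general \(D_m\in|mH|\) avoids \(\NCart(A)\).
\item Because \(A|_{D_m}\) is Cartier, \(F(n)-F(n-mq)=\chi(D_m,\OO_{D_m}(nA))\) is an honest cubic polynomial in \(n\).
\item The paper identifies this cubic with \(P_H(n/q)-P_H(n/q-m)\) using Serre duality on the Gorenstein threefold \(D_m\), its leading coefficient, and its value at \(n=0\). Agreement at all \(n\in q\ZZ\) would also suffice.
\item It then subtracts the cases \(m\) and \(m+1\) to get the recurrence for step \(q\).
\end{itemize}
No divisibility issue arises, since one never restricts to \(|H|\) itself. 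That route also needs no resolution, no Leray comparison, and no Todd-class bookkeeping.
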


\begin{proof}
Let \(P(x)\) be the polynomial given in \eqref{eq:cartierRR}.
For each sufficiently large integer \(m>1\), a general member
\(D_m\in|mH|\) avoids the finite locus \(\NCart(A)\) and gives an exact sequence
\[
 0\longrightarrow\OO_X((n-mq)A)\longrightarrow\OO_X(nA)
 \longrightarrow\OO_{D_m}(nA)\longrightarrow0,
\]
since the defining section of \(D_m\) is a unit near \(\NCart(A)\) and the sheaves are line bundles elsewhere.

By Bertini's theorem on a log resolution, \(D_m\) is canonical, and adjunction gives
\[
 K_{D_m}\sim(m-1)qA|_{D_m},
\]
so \(D_m\) is a canonical Gorenstein threefold on which \(A|_{D_m}\) is Cartier. Thus
\[
 Q_m(n):=F(n)-F(n-mq)=\chi(D_m,\OO_{D_m}(nA))
\]
is a cubic polynomial in \(n\) with leading coefficient \(mV/(6q^3)\).
Serre duality on \(D_m\) gives
\(Q_m(n)=-Q_m((m-1)q-n)\), and
\(Q_m(0)=1-P(-m)\) since \(F(-mq)=P(-m)\).
Because an odd cubic polynomial about the nonzero center \((m-1)q/2\) is uniquely determined by its leading coefficient and its value at zero, and \(P(n/q)-P(n/q-m)\) shares these three properties, we have
\[
 Q_m(n)=P(n/q)-P(n/q-m).
\]
Subtracting \(Q_m(n-q)\) from \(Q_{m+1}(n)\) gives
\(F(n)-F(n-q)=P(n/q)-P(n/q-1)\), which equals the right-hand side of
\eqref{eq:Weilrecurrence}. Note that the fractional arguments here occur in a polynomial identity justified by Cartier restriction on \(D_m\);
no Riemann--Roch formula for a fractional Cartier multiple is assumed.
\end{proof}

\begin{corollary}[Primitive sections and Hilbert series]\label{lem:WeilRR}
Under the hypotheses of Lemma~\ref{lem:Weilrecurrence}, suppose
\(q\ge2\), and put \(k=V/q^3\), \(N=h^0(X,A)\). Then
\begin{equation}\label{eq:Weilsmall}
 h^0(X,sA)=sN+\frac{k(s^3-s)}6\quad(0<s<q),
 \qquad N=\frac{k}{6}+\frac{C}{12q}.
\end{equation}
The section ring has Hilbert series
\begin{equation}\label{eq:Weilseries}
 \sum_{n\ge0}h^0(X,nA)t^n
 =\frac{1+(N-4)t+(6-2N+k)t^2+(N-4)t^3+t^4}
 {(1-t)^4(1-t^q)}.
\end{equation}
\end{corollary}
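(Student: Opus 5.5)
The plan is to combine the recurrence \eqref{eq:Weilrecurrence} with Kawamata--Viehweg vanishing and Serre duality. First, for \(n>-q\) we have \(nA-K_X\sim(n+q)A\) ample \(\QQ\)-Cartier, so the Weil form of Kawamata--Viehweg vanishing \cite[Theorem 1.2]{FujinoKV} gives \(F(n)=h^0(X,nA)\) for \(n\ge0\) and \(F(n)=0\) for \(-q<n<0\). Indeed, for negative \(n\) no nonzero effective divisor lies in the class \(nA\) (every effective class is a positive multiple of \(A\) modulo torsion), so \(h^0(X,nA)=0\). We also have \(F(0)=1\).

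Next I apply \eqref{eq:Weilrecurrence} with \(0<s<q\). Since \(s-q\in(-q,0)\), this gives
\[
h^0(X,sA)=F(s)=\frac{Vs^3}{6q^3}+\frac{Cs}{12q}=\frac{ks^3}{6}+\frac{Cs}{12q}.
\]
Setting \(s=1\) (allowed because \(q\ge2\)) yields \(N=k/6+C/(12q)\). Substituting \(C/(12q)=N-k/6\) back gives \(h^0(X,sA)=sN+k(s^3-s)/6\), which is \eqref{eq:Weilsmall}.

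For the Hilbert series, write \(\Phi(t)=\sum_{n\ge0}F(n)t^n\). Multiplying by \((1-t^q)\) gives coefficients \(F(n)-F(n-q)\) for \(n\ge q\), and \(F(n)\) for \(0\le n<q\). In the latter range \(F(n-q)=0\) when \(n>0\). At \(n=0\) the coefficient is \(F(0)=1\), while the polynomial \(g(n):=kn^3/6+(N-k/6)n\) vanishes at \(0\). Hence \((1-t^q)\Phi(t)=1+\sum_{n\ge1}g(n)t^n\), uniformly for all \(n\ge0\) except for the constant term. Here \(g\) is the odd cubic \(kn^3/6+(N-k/6)n\), using \(C/(12q)=N-k/6\) and \(V/q^3=k\).

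It remains to compute \(1+\sum_{n\ge1}g(n)t^n\) as a rational function with denominator \((1-t)^4\). I would use
\[
\sum n t^n=\frac{t}{(1-t)^2},\qquad \sum n^3t^n=\frac{t(1+4t+t^2)}{(1-t)^4}.
\]
The numerator is then
\[
(1-t)^4+\tfrac{k}{6}t(1+4t+t^2)+\bigl(N-\tfrac{k}{6}\bigr)t(1-t)^2.
\]
Expanding coefficient by coefficient gives \(1\), then \(-4+k/6+N-k/6=N-4\), then \(6+2k/3-2N+k/3=6-2N+k\), then \(-4+k/6+N-k/6=N-4\), then \(1\). This is the palindromic numerator in \eqref{eq:Weilseries}.

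The only delicate step is the vanishing \(F(n)=0\) for \(-q<n<0\). It needs KV vanishing for the Weil divisor \(nA\), which is \(\QQ\)-Cartier since \(X\) is \(\QQ\)-factorial. Combined with \(H^0=0\), the Euler characteristic vanishes. For that it is important that \(H\sim qA\) holds linearly, so that \(nA-K_X\sim(n+q)A\) is an honest linear equivalence of Weil divisors; this is part of the hypotheses of Lemma~\ref{lem:Weilrecurrence}. The rest is bookkeeping.
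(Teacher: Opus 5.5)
Your proposal is correct and follows the paper's proof essentially step for step: Kawamata--Viehweg vanishing for \(nA\) with \(n>-q\) identifies \(F(n)\) with \(h^0(X,nA)\) for \(n\ge0\) and gives \(F=0\) on \((-q,0)\); the recurrence at \(0<s<q\) (with \(s=1\) allowed since \(q\ge2\)) gives \eqref{eq:Weilsmall}; and summing the recurrence against \(t^n\) yields \eqref{eq:Weilseries}. The only cosmetic difference is that you sum \(n^3t^n\) directly, whereas the paper sums \((n^3-n)t^n\). Your coefficient check of the palindromic numerator is correct.
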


\begin{proof}
For \(n>0\), the integral
\(\QQ\)-Cartier Weil-divisor form of Kawamata--Viehweg vanishing \cite[Theorem 1.2]{FujinoKV}
applies to both \(nA\) and \((n-q)A\) since their differences
from \(K_X\) are ample. When \(0<n<q\), \(h^0(X,(n-q)A)=0\), so Lemma~\ref{lem:Weilrecurrence} gives \eqref{eq:Weilsmall}, and for all \(n>0\),
\[
 h^0(nA)-h^0((n-q)A)=nN+\frac{k(n^3-n)}6.
\]
Multiplying by \(t^n\), summing over \(n\ge 1\), and using
\(\sum_{n\ge1}nt^n=t/(1-t)^2\) and
\(\sum_{n\ge1}(n^3-n)t^n=6t^2/(1-t)^4\) together with
\(h^0(\OO_X)=1\) gives $(1-t^q)\sum_{n\ge0}h^0(X,nA)t^n$ on the left-hand side; dividing by \(1-t^q\) yields \eqref{eq:Weilseries}.
\end{proof}

\subsection{Two consequences of primitivity}
The following observations do not require \(A\) to be Cartier and will also be used in the canonical case.

\begin{lemma}\label{lem:primitive}
Let \(A\) be a positive primitive Weil divisor on a normal projective
variety with \(\Cl(X)/\mathrm{tors}=\ZZ A\).
\begin{enumerate}
\item If \(h^0(A)\ge3\), the image of \(|A|\) is not a curve.
\item The kernel of
\(\Sym^2H^0(A)\to H^0(2A)\) contains no nonzero quadratic form
of rank at most four. If \(N=h^0(A)\ge3\), then
\begin{equation}\label{eq:quadraticgrowth}
                              h^0(2A)\ge4N-6.
\end{equation}
\end{enumerate}
\end{lemma}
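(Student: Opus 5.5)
For part (1), I would argue by contradiction from primitivity. Suppose \(N=h^0(A)\ge3\) and the image of \(\phi_{|A|}\) is a curve \(\Gamma\). Write \(|A|=|M|+F\), with \(F\) the fixed part and \(M\) the mobile part. Since the image is a curve, the Stein factorization of the rational map gives a pencil of prime divisors \(P\) on a curve \(\Gamma'\) with \(M\sim_{\QQ}\) (pullback of a divisor of degree \(d\) on \(\Gamma'\)), so \(M\equiv dP\). The general member of \(|M|\) is then a sum of \(d\) fibres. Because \(h^0(A)\ge3\) and the image \(\Gamma\) is nondegenerate in \(\PP^{N-1}\) with \(N-1\ge2\), we get \(\deg\Gamma\ge2\), hence \(d\ge2\). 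Each fibre \(P\) is a nonzero effective Weil divisor, so its class modulo torsion is \(aA\) with \(a\ge1\). But then \(A\equiv M+F\equiv daA+F\) with \(F\) effective, which forces \(1\ge da\ge2\), a contradiction. The only point needing care is that fibres over distinct points are linearly equivalent modulo torsion. This holds because \(\Gamma'\) is rational: it is dominated by a rationally connected \(X\) in our setting. In a general normal variety one should instead use that \(\Cl/\mathrm{tors}\) has rank one, so numerical classes are determined by degree, and the comparison \(A\equiv daA+F\) needs only numerical classes. So I will phrase the argument with \(\equiv\) and degrees against an ample class.

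For part (2), a nonzero quadric \(Q\) of rank \(\rho\le4\) in the kernel can be written, after a change of basis, as \(x_1x_2\), \(x_1^2\), \(x_1x_2-x_3^2\), or \(x_1x_2-x_3x_4\). The cases of rank one and two are immediate: \(s_1s_2=0\) in the domain \(\bigoplus H^0(nA)\) of a normal (integral) variety forces some \(s_i=0\). Now take rank four, \(s_1s_2=s_3s_4\) with \(s_i\) linearly independent. Consider the divisors \(D_i=\operatorname{div}(s_i)\in|A|\), so that \(D_1+D_2=D_3+D_4\). Since \(s_1,s_3\) are independent, \(D_1\ne D_3\). Let \(G=\gcd(D_1,D_3)\), so \(D_1=G+E_1\) and \(D_3=G+E_3\) with \(E_1,E_3\) sharing no components. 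From \(E_1+D_2=E_3+D_4\) we get \(E_3\le D_2\). Hence \(D_2=E_3+G'\) and \(D_1=G+E_1\), and all of \(G,E_1,E_3,G'\) are effective. By primitivity the classes of \(D_1\) and \(D_3\) are both \(A\), and \(E_1,E_3\ne0\). So \(G\) has class \(aA\) and \(E_1\) has class \((1-a)A\) with \(1-a\ge1\), forcing \(G=0\) (since \(a\ge0\) and nonzero effective divisors have positive class). Symmetrically \(G'\) has class \(0\), so \(G'=0\), and therefore \(D_2=E_3=D_3\) and \(D_1=D_4\). Then \(s_2=cs_3\) by normality, since the divisors agree and \(h^0(\OO_X)=1\), contradicting independence. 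The rank-three case reduces to this one with \(s_3=s_4\).

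For \eqref{eq:quadraticgrowth}, I will bound the dimension of the kernel \(K\) of \(\Sym^2H^0(A)\to H^0(2A)\). Restricting the multiplication map, a subspace \(W\subset H^0(A)\) of dimension \(4\), or more cleverly a general linear projection, is the natural tool. Here I expect the main obstacle. My first attempt will be to show that \(K\) meets the rank-\(\le4\) locus trivially by dimension counting. Quadrics of rank \(\le4\) in \(N\) variables form a cone of dimension \(4N-6\) in \(\Sym^2\), namely \(\binom{N+1}{2}-\binom{N-3}{2}\). A linear subspace \(\PP(K)\) of \(\PP(\Sym^2)\) disjoint from this projective variety of dimension \(4N-7\) must satisfy \(\dim K\le \binom{N+1}{2}-(4N-6)\). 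Hence the image has dimension at least \(4N-6\), giving \(h^0(2A)\ge4N-6\). This projective-dimension count is exactly what the rank bound from the first half delivers.
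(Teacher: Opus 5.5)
Your proposal is correct and follows essentially the same route as the paper: Stein factorization plus the primitivity count \(1\ge da\ge2\) for (1), and for (2) excluding rank-\(\le4\) relations through their zero divisors, followed by the projective dimension count with \(\dim\Sigma_4=4N-7\). The differences are cosmetic. The paper shortens your gcd argument by noting that every member of \(|A|\) is a prime divisor, so \(D_1+D_2=D_3+D_4\) immediately forces \(\{D_1,D_2\}=\{D_3,D_4\}\). Your formula \(\binom{N+1}{2}-\binom{N-3}{2}=4N-6\) also covers \(N=3\) uniformly, whereas the paper treats that case separately.
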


\begin{proof}
Every effective member of \(|A|\) is a reduced prime divisor, so any two linearly independent sections in \(H^0(X,A)\) have distinct prime
zero divisors.

For (1), suppose $\dim \im|A|=1$. Resolve the rational map $\pi: \widehat{X}\to X$ and take the Stein factorization $\widehat{X}\to B\to \im\phi_{|A|}$
over a smooth curve \(B\). The moving part of $\pi^*|A|$ is the pullback of a basepoint-free linear system on \(B\) of dimension at least \(2\), hence of degree \(d\ge2\).
A general fiber $F$ of $\widehat{X}\to B$ pushes down to a nonzero effective divisor $D_F\equiv aA$ on $X$ with \(a\ge1\), and the fixed part pushes down to $E\equiv bA$ with \(b\ge0\), giving $A\equiv d D_F+E\equiv (ad+b)A$ and \(1=ad+b\ge 2\), a contradiction.

For (2), over \(\CC\), a nonzero quadratic relation of rank at most four
can be written as \(s_1s_2=s_3s_4\) or
\(s_1s_2=s_3^2\), allowing \(s_4=0\) in rank three.
Equality of the corresponding zero divisors would force
two linearly independent sections to share a prime component, which is impossible. Thus \(K:=\ker (\Sym^2H^0(A)\to H^0(2A))\) avoids the rank-at-most-four locus \(\Sigma_4=\{[Q]\in \PP(\Sym^2H^0(A)):\rk Q\leq 4\}\).
For \(N=3\), multiplication is injective and gives \eqref{eq:quadraticgrowth}.
For \(N\ge4\), $\dim \Sigma_4=\dim \mathrm{Gr}(N-4,N)+\binom{4+1}{2}-1=4N-7$, so $\codim \Sigma_4= N(N+1)/2-1-(4N-7)=(N-3)(N-4)/2$. Since $\PP(K)\cap \Sigma_4=\varnothing$, the projective dimension theorem gives $\dim K\leq (N-3)(N-4)/2$, and hence $h^0(X,2A)\ge N(N+1)/2- (N-3)(N-4)/2=4N-6$.
\end{proof}

\subsection{The finite terminal list}
For a positive vector \(v=(v_1,v_2,v_3,v_4)\) with \(\sum_i v_i=1\), write
\[
                            B(v)=\frac{625}{256\prod_i v_i}.
\]
Suppose throughout this subsection that \(X\) is terminal.
If \(q\ge6\), positivity and \eqref{eq:terminalslopes} imply
\(\lambda_4\ge1/q\): if the last HN block has rank \(r=1\), this follows from its positive integral degree;
for \(r=2\), the last two slopes sum to more than \(1/3\);
for \(r=3\), their common value is at least \(2/9\);
and for \(r=4\), it is \(1/4\).
Thus \(\lambda\) is majorized by
\[
                \left(\frac13,\frac13,\frac13-\frac1q,\frac1q\right).
\]
Lemma~\ref{lem:jet-equality}, Theorem~\ref{thm:jets}, and Lemma~\ref{lem:finiteCartier} yield
\[
                q^3\le V\le\frac{16875q^2}{256(q-3)},
\]
so \(q(q-3)\le16875/256\) and \(q\le9\).

For each \(1\le q\le9\), we enumerate the positive ordered compositions \((r_1,\ldots,r_\ell)\) of \(4\) and
\((c_1,\ldots,c_\ell)\) of \(q\) with strictly decreasing block slopes \(\beta_j=c_j/(qr_j)\) satisfying \eqref{eq:terminalslopes}. Table~\ref{tab:terminal} records the number of admissible vectors \(\lambda\), the maximum of \(B(\lambda)\), and the largest multiple of \(q^3\) not exceeding \(\max_\lambda B(\lambda)\).

\begin{table}[htbp]
\centering
\caption{Terminal HN slope vectors after the strict low-rank inequalities.
The last column also imposes \(q^3\mid V\).}\label{tab:terminal}
\begin{tabular}{@{}crcr@{}}
\toprule
\(q\)&Number of admissible \(\lambda\)&\(\max_{\lambda} B(\lambda)\)&Maximal volume satisfying $q^3\mid V$\\
\midrule
1&1&625&625\\
2&1&625&624\\
3&2&\(1366875/2048\)&648\\
4&1&625&576\\
5&3&\(390625/576\)&625\\
6&4&\(5625/8\)&648\\
7&4&\(1500625/2048\)&686\\
8&4&800&512\\
9&6&\(54675/64\)&729\\
\bottomrule
\end{tabular}
\end{table}

Only three pairs \((V,\lambda)\) satisfy \(V>648\) and \(q^3\mid V\):
\begin{equation}\label{eq:terminalcandidates}
\begin{array}{c|c|c}
q&V&\lambda\\ \hline
7&686&(2/7,2/7,2/7,1/7)\\
9&729&(8/27,8/27,8/27,1/9)\\
9&729&(1/3,5/18,5/18,1/9).
\end{array}
\end{equation}
In each case, \(B(\lambda)<855<2V\), so
Lemma~\ref{lem:cover} eliminates nontrivial finite
quasi-\'etale covers and gives \(H\sim qA\), making
Corollary~\ref{lem:WeilRR} applicable.

We now exclude these three cases. For \(q=7\), \eqref{eq:chern} gives \(C\ge252\), so
\[
                       N=\frac13+\frac C{84}\ge\frac{10}{3},
\]
and hence $N\ge 4$. Scaling \(\lambda\) by \(7\), the weights are
\((2,2,2,1)\) and the allowed average weight for \(A\) is \(1\).
The four smallest monomial weights sum to \(0+1+2+2=5>4\), so
Theorem~\ref{thm:jets} gives \(N\le3\), a contradiction.

For \(q=9\), the Chern bounds \eqref{eq:chern} for the two vectors \(\lambda\) are
\(C\ge264\) and \(C\ge1053/4\), giving
\[
                             N=\frac16+\frac C{108}>2,
\]
so $N\ge 3$. For both vectors \(\lambda\), the three monomials of smallest weight are \(1,z_4,z_4^2\) (where \(\operatorname{wt}_\lambda(z_4)=1/9\)), with total weight \(\J_\lambda(3)=1/9+2/9=1/3\), while every fourth monomial has weight greater than \(2/9\), so $\J_\lambda(4)>4/9$. Thus \eqref{eq:finitejet} forces \(N=3\) with equality, and since \(z_4\) is the only monomial of ordinary total degree one among \(1,z_4,z_4^2\), Lemma~\ref{lem:jet-equality} implies \(\dim\im|A|=1\), contradicting Lemma~\ref{lem:primitive}(1). This proves \(V\le648\).

\section{The equality case in the terminal bound}\label{sec:equality}

Throughout this section, assume \(X\) is terminal and \(V=648\).
Table~\ref{tab:terminal} and \(q^3\mid 648\) give \(q=3\) or \(6\).
Since \(B(\lambda)<2V\), Lemma~\ref{lem:cover} rules out nontrivial finite
quasi-\'etale covers and gives \(H\sim qA\).
Combining
\[
 \frac{V}{6q^3}+\frac{(1-\sum\lambda_i^2)V}{24q}
       \le N\le\max\{n:\J_\lambda(n)\le n/q\}
\]
leaves the following three possibilities:
\begin{equation}\label{eq:648data}
\begin{array}{c|c|r|r}
q&\lambda&N&C\\ \hline
3&(1/3,2/9,2/9,2/9)&11&252\\
6&(1/3,2/9,2/9,2/9)&4&252\\
6&(1/3,1/4,1/4,1/6)&4&252.
\end{array}
\end{equation}
(The vector \(\lambda=(5/18,5/18,5/18,1/6)\) at \(q=6\) is omitted because its Chern lower bound requires \(N\ge4\) whereas its finite jet bound gives \(N\le3\).)

\subsection{Excluding index three}
Suppose \(q=3\). Recall that $\EE_X$ in \eqref{eq:extension} has rank five and $c_1(\EE_X)=H\sim 3A$. If $Q_1,\dots,Q_\ell$ are its HN quotients, then
\[
r_j=\rk Q_j,\quad c_1(Q_j)\equiv c_jA,\quad \sum_jr_j=5,\quad \sum_jc_j=3,
\]
and the normalized block slopes $c_j/(3r_j)$ decrease strictly with $j$.

Since the positive integers $c_j$ sum to three, there are at most three blocks. Three blocks would have degrees $(1,1,1)$ and $r_1<r_2<r_3$, which is impossible since $\sum r_j=5$. Checking the two-block compositions $(1,2)$ and $(2,1)$ and the semistable case $(5;3)$ leaves five candidate HN types for $\EE_X$:
\begin{equation}\label{eq:E3}
 (r_j;c_j)=(1,4;1,2),\ (1,4;2,1),\ (2,3;2,1),\
                   (3,2;2,1),\ (5;3).
\end{equation}

The type \((1,4;2,1)\) is impossible: if the first HN block $\mathcal{B}\subset \EE_X$ has rank one and $c_1(\mathcal{B})\equiv cA$, then $\mathcal{B}$ has positive slope, so the natural projection $\alpha:\mathcal{B}\to \TT_X$ is generically injective. Saturating its image in $\TT_X$ gives a rank-one foliation with determinant class $dA$ (\(d\ge c\)), and Proposition~\ref{prop:terminalslopes} gives $d/3\le 1/3$, so $c=d=1$.

The type \((2,3;2,1)\) is also impossible. Let \(\mathcal B\subset\EE_X\)
be its first HN block, which is semistable of rank two and normalized slope
\(1/3\). If \(\mathcal B\) contains the kernel \(\OO_X\) of \(\alpha:\EE_X\to\TT_X\),
its rank-one image in \(\TT_X\) has determinant class at least
\(2A\), whose saturation in \(\TT_X\) contradicts the rank-one bound \(u\le 1/3\).
Otherwise \(\mathcal B \to \TT_X\) is generically injective, and the saturation \(\mathcal G\subset\TT_X\) of its image has rank two and $c_1(\mathcal{G})\equiv d A$ with $d\ge 2$. Then
\[
 \mu_{\min}(\mathcal G)/V\ge\mu_{\min}(\mathcal B)/V=1/3,\qquad
 c_1(\mathcal G)\equiv uH,\quad u=d/3\ge2/3.
\]
Since \(\TT_X/\mathcal G\) is a quotient of the generically ample sheaf \(\TT_X\) with $c_1(\TT_X/\mathcal G)\equiv (1-u)H$, we have
\[\mu_{\max}(\TT_X/\mathcal G)/V =\mu_1/V \le (\mu_1+\mu_2)/V=1-u\le1/3,\]
whereas $\mu(\wedge^2\mathcal G)=\mu_H(\det \mathcal G)= uV\ge 2V/3$. Thus \(\mathcal G\) is closed under the Lie bracket and defines a rank-two foliation with positive minimal slope and \(u\ge 2/3\), contradicting Proposition~\ref{prop:terminalslopes}.

For the remaining types \((1,4;1,2)\), \((3,2;2,1)\), and \((5;3)\) in \eqref{eq:E3}, the sums of the five
squared normalized slopes of \(\EE_X\) are
\[
                              \frac29,\qquad\frac{11}{54},
                              \qquad\frac15,
\]
so Lemma~\ref{lem:chern} for \(\EE_X\) gives
\(C\ge 252, 258, 1296/5\), respectively. Since $C=252$ by \eqref{eq:648data}, \(\EE_X\) must have type
\((1,4;1,2)\), and equality holds in the discriminant inequality for \(\EE_X\). By Lemma~\ref{lem:chern}, the rank-four reflexive HN quotient of \(\EE_X\) has zero orbifold discriminant and determinant class \(2A\). Lemma~\ref{lem:flat} makes \(2A\) divisible by four in \(\Cl(X)=\ZZ A\), a contradiction. Thus \(q=6\).

\subsection{Determination of the section ring}
Now \(q=6\), \(k=3\), and \(N=4\), so \eqref{eq:Weilseries} for $R:=R(X,A)=\bigoplus_{n\ge 0}H^0(X, nA)$ becomes
\begin{equation}\label{eq:ring648}
      H_R(t)= \sum_{n\ge0}h^0(nA)t^n
       =\frac{1+t^2+t^4}{(1-t)^4(1-t^6)}
       =\frac1{(1-t)^4(1-t^2)}.
\end{equation}
In particular, \(h^0(2A)=11\), and the right-hand side of \eqref{eq:ring648} is the Hilbert series of a polynomial ring on four generators of degree one and one generator of degree two.

We first show that \(\dim\im|2A|=4\). For the first \(q=6\) vector \(\lambda=(1/3,2/9,2/9,2/9)\) in \eqref{eq:648data}, multiplying the weights by \(9\) gives integer weights \((3,2,2,2)\) and an allowed
average weight of $9\cdot2\cdot(1/6)=3$ for \(2A\). The \(11\) monomials of smallest weight consist of \(1\), the four monomials of ordinary total degree one, and the six quadratic monomials in the three weight-two variables; their total weight is \(9+24=33=11\cdot3\), so equality holds in \eqref{eq:finitejet}.
For the second vector \(\lambda=(1/3,1/4,1/4,1/6)\), multiplying by $12$ gives integer weights \((4,3,3,2)\) and an allowed average weight of $12\cdot 2\cdot(1/6)=4$. There are eight monomials of weight less than \(6\) (of total weight \(26\)), and any three monomials of weight \(6\) complete a set of \(11\) monomials of smallest weight, with total weight \(26+18=44=11\cdot4\), so equality again holds.
For both vectors, all four monomials of ordinary total degree one have weight strictly smaller than the weight of the \(11\)-th monomial (\(4\) and \(6\), respectively), so every choice of \(11\) monomials of smallest weight contains all four linear monomials. By Lemma~\ref{lem:jet-equality}, $b_1=4$ and \(\dim\im|2A|=4\).

Choose a basis \(x_0,\ldots,x_3\) of \(R_1\). By Lemma~\ref{lem:primitive}(2), $\Sym^2R_1\to R_2$ is injective; since $\dim \Sym^2R_1=10$ and $\dim R_2=11$, we may choose \(y\in R_2\) so that $R_2=\Sym^2R_1\oplus\CC y$. This defines a graded homomorphism
\[
             \Phi: S=\CC[X_0,X_1,X_2,X_3,Y]\longrightarrow R,
              \qquad \deg X_i=1,\quad\deg Y=2,
\]
with image $R':=\im\Phi$ satisfying $R'_1=R_1$ and $R_2'=R_2$. Since the rational map defined by
\(R_2\) has image dimension four, $R'$ has Krull dimension five. Because $S$ is a five-dimensional polynomial domain, \(\Phi\) is injective, and comparing \eqref{eq:ring648} with the Hilbert series of \(S\) shows that \(\Phi\) is an isomorphism.

Since $H\sim 6A$ is ample Cartier, the sixth Veronese subring $R^{(6)}$ gives
\[
X\simeq \Proj R^{(6)}\simeq \Proj R =\PP(1,1,1,1,2).
\]
Conversely, \(\PP(1,1,1,1,2)\) is well-formed,
\(\QQ\)-factorial, of Picard number one, with Cartier anticanonical class
\(-K=\OO(6)\) of volume \(6^4/2=648\), and its unique quotient singularity
\(\frac12(1,1,1,1)\) has age two and is terminal by \cite[Section 4]{Reid}.
This proves Theorem~\ref{thm:terminal}.

\section{The canonical volume bound}\label{sec:canonical}

We now prove the sharp upper bound in Theorem~\ref{thm:canonical}.
Throughout this section, \(X\) is a \(\QQ\)-factorial Gorenstein canonical Fano fourfold with
\(\rho(X)=1\), and we use the low-rank slope bounds
\[
 \lambda_1\le\frac12,\qquad
 \lambda_1+\lambda_2\le\frac56
\]
from Proposition~\ref{prop:canonicalslopes}. If the HN filtration of \(\TT_X\) has no term of rank three, Proposition~\ref{prop:canonicalslopes} gives \(\lambda_4\ge 1/12\). We therefore study the rank-three case.

\subsection{The pencil and its divisorial fibers}

Suppose that \(\lambda_3>\lambda_4\), and let \(\FF\subset\TT_X\) be the
rank-three HN term, with \(\tau=\lambda_4\). By
Lemma~\ref{lem:integrable}, \(\FF\) has a rational first integral.
Taking its Stein factorization on a resolution gives a rational pencil with connected general fibers,
\[
 f:X\dashrightarrow\PP^1,
\]
where the base is \(\PP^1\) because \(X\) is rationally
connected \cite[Theorem 1]{ZhangRC}. Let \(D\) be the closure of
a general fiber of \(f\), let \(\nu:G\to D\) be its normalization, and put
\[
 L=\nu^*(H|_D),\qquad \ell=L^3.
\]
The divisor \(D\) is prime, and \(L\) is ample Cartier on \(G\), so \(\ell\in\ZZ_{>0}\).

\begin{lemma}\label{lem:pencilirreducible}
Every divisorial fiber of \(f\) has irreducible support. Write the
nonreduced divisorial fibers as \(m_iB_i\), where \(m_i\ge2\),
and set
\[
 \beta=2-\sum_i\left(1-\frac1{m_i}\right),\qquad
 \eta=\frac2\beta.
\]
Then \(\beta>0\), \(\eta\ge1\), and
\begin{equation}\label{eq:pencildegree}
 D\qsim\frac{\tau}{\beta}H,\qquad
 \ell=\frac{\eta\tau V}{2}.
\end{equation}
There are at most three nonreduced divisorial fibers.
\end{lemma}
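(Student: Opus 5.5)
Irreducibility of divisorial fibers comes from \(\rho(X)=1\) and \(\QQ\)-factoriality. Each fiber of the pencil \(f\) is numerically equivalent to \(D\), and any two distinct fibers are disjoint away from the base locus of the pencil, which has codimension at least two. Suppose a divisorial fiber were \(\sum a_kE_k\) with at least two distinct prime components. Each \(E_k\) is \(\QQ\)-Cartier, and because \(\rho=1\) each is ample. Choose a general fiber \(D'\) and restrict to a general complete-intersection surface cut out by two members of \(|mH|\). On that surface, \(E_1\) and \(E_2\) are curves lying in one fiber of the induced fibration, while \(D'\) is a disjoint fiber class. Zariski's lemma makes the intersection form negative semidefinite on fiber components, with kernel spanned by the whole fiber. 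This contradicts the ampleness of a single component \(E_1\), since \(E_1^2\cdot H^2>0\) but \(E_1\) is not proportional to the full fiber. So every divisorial fiber is \(m_iB_i\) with \(B_i\) prime.

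For \eqref{eq:pencildegree}, we compare \(-K_{\FF}\) with the pullback of \(-K_{\PP^1}\). On a resolution where \(f\) becomes a morphism \(\tilde f\), the foliation is induced by \(\tilde f\), and the standard formula reads
\[
K_{\FF}\sim K_{X/\PP^1}-\sum(\text{multiple fibers})(m_i-1)B_i
\]
modulo exceptional divisors and non-divisorial contributions. Pushing this down to \(X\) in codimension one gives
\[
c_1(\TT_X/\FF)\equiv f^*c_1(\TT_{\PP^1})-\sum_i(m_i-1)B_i\equiv 2D-\sum_i(1-1/m_i)D=\beta D,
\]
using \(m_iB_i\equiv D\). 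The normal sheaf \(\TT_X/\FF\) has \(c_1\equiv\lambda_4H=\tau H\), so \(D\qsim(\tau/\beta)H\). Generic ampleness gives \(\tau>0\), and since \(D\) is effective this forces \(\beta>0\). Then \(\ell=L^3=H^3D=\tau V/\beta=\eta\tau V/2\). Since \(\beta\le2\), we get \(\eta\ge1\).

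Finally, each nonreduced fiber contributes \(1-1/m_i\ge1/2\) to the sum in \(\beta\), and \(\beta>0\) bounds the number of such fibers by three. The main obstacle is justifying the normal-sheaf formula in codimension one on the singular \(X\). The key points are that the indeterminacy of \(f\) has codimension at least two, and that \(\FF\), being saturated, agrees with the relative tangent sheaf \(\TT_{X/\PP^1}\) off the multiple fibers. Along a multiple fiber \(B_i\), at a general point where \(X\) and \(B_i\) are smooth, the local form \(t=x^{m_i}\) shows that the image of \(\TT_X\to f^*\TT_{\PP^1}\) has colength exactly \((m_i-1)B_i\). Reduced fibers contribute nothing, because the general point of a reduced component is smooth on the fiber.
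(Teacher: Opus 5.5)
\textbf{Genuine gap: the irreducibility step.} Your normal-sheaf computation, $c_1(\TT_X/\FF)\equiv 2D-\sum_i(m_i-1)B_i\qsim\beta D$, matches the paper. So do your deductions of $\beta>0$, $\eta\ge1$, $\ell=\eta\tau V/2$, and the bound of three nonreduced fibers. The problem is irreducibility, which does not follow from $\rho(X)=1$ and $\QQ$-factoriality alone. On $\PP^4$, the pencil spanned by $x_0x_1$ and $x_2^2$ has irreducible general member $x_0x_1=sx_2^2$, yet it has the reducible fiber $\{x_0x_1=0\}$.

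Your argument breaks at the claim that $D'$ is a ``disjoint fiber class'' on the surface $S$. Since every member of the pencil is ample, any two members meet. Moreover $D\cap D'$ lies in the indeterminacy locus of $f$, which is therefore a nonempty surface. A general complete intersection surface $S$ meets it in finitely many points, so the pencil induced on $S$ has base points, and Zariski's lemma does not apply on $S$. Indeed, on $S$ each $E_k|_S$ is numerically a positive multiple of $H|_S$, so the intersection form on their span is positive. Zariski's lemma only holds on the blowup $\widetilde S$ resolving these base points. There the strict transforms of $E_k|_S$ can have negative self-intersection, so ampleness of $E_1$ gives no contradiction.

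\textbf{The missing ingredient is the positivity of $\FF$}, and the paper uses it as follows. Since $\Cl(X)/\mathrm{tors}$ has rank one, there is $h\in\CC(X)^*$ with $\operatorname{div}(h)=aB-bD$ for some $a,b>0$. Both $B$ and $D$ are $\FF$-invariant, so $v\mapsto v(h)/h$ is regular in codimension one and defines a morphism $\FF\to\OO_X$. This morphism vanishes because $\mu_{H,\min}(\FF)>0$, so $h$ is a first integral of $\FF$. Since the general fibers are connected, $\CC(\PP^1)$ is algebraically closed in $\CC(X)$, and hence $h$ is pulled back from $\PP^1$. Its divisor then contains every component of each fiber over its zeros and poles, which forces $B$ to be the whole support of its fiber.

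In the $\PP^4$ example the same construction yields a nonzero map $\FF\to\OO_X$, consistent with that foliation not having positive minimal slope. Your normal-sheaf step depends on this irreducibility, since it needs each special fiber to be $m_iB_i$ with $m_iB_i\equiv D$.
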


\begin{proof}
Let \(B\) be a prime component of a divisorial fiber of \(f\). Since
\(\Cl(X)/\mathrm{tors}\) has rank one, there exist positive
integers \(a,b\) and \(h\in\CC(X)^*\) such that
\[
 \operatorname{div}(h)=aB-bD.
\]
Both \(B\) and \(D\) are \(\FF\)-invariant, so at a generic point
of either divisor (with local equation \(x=0\)), any local section
\(v\) of \(\FF\) satisfies \(v(x)\in(x)\).
Thus contraction with \(d\log h\) is regular in codimension one and defines a morphism
\[
 \FF\longrightarrow\OO_X.
\]
Since \(\mu_{H,\min}(\FF)>0\), this morphism is zero, so \(h\)
is constant along the leaves of \(\FF\). Because the general fibers of \(f\) are connected,
\(\CC(\PP^1)\) is algebraically closed in \(\CC(X)\), and
\(h\in\CC(\PP^1)^*\). Since the divisor of a rational function from \(\PP^1\)
contains every component of each fiber over its zeros and poles, \(B\) cannot be a proper component of a fiber, so every divisorial fiber has irreducible support.

Let \(N_{\FF}=(\TT_X/\FF)^{**}\) be the reflexive normal sheaf of \(\FF\). At a fiber of multiplicity \(m_i\), a local base parameter has order \(m_i\) and its differential vanishes to order \(m_i-1\), so saturating the differential of the pencil gives
\[
 N_{\FF}\sim 2D-\sum_i(m_i-1)B_i
           \qsim\beta D.
\]
On the other hand, the final rank-one HN quotient has normalized slope \(\tau\), and \(\rho(X)=1\) gives
\[
 N_{\FF}\qsim\tau H.
\]
Since \(D\) is ample and \(\tau>0\), we have \(\beta>0\), and intersecting
with \(H^3\) gives \eqref{eq:pencildegree}. Since each summand \(1-1/m_i\) in \(2-\beta\) is at least \(1/2\), there are at most three nonreduced divisorial fibers.
\end{proof}

\subsection{A jet estimate after base change}

The multiplicities of the divisorial fibers of the pencil defined by the rank-three HN term of \(\TT_X\) enter both the leaf degree
and the ambient jet estimate after base change through the same factor \(\eta\).

\begin{proposition}\label{prop:adjustedjets}
For the pencil defined by the rank-three HN term of \(\TT_X\), one has
\begin{equation}\label{eq:adjustedjets}
 \eta V\le B(\lambda)
       =\frac{625}{256\lambda_1\lambda_2\lambda_3\lambda_4}.
\end{equation}
\end{proposition}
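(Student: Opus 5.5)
The idea is to pass to an orbifold base change that removes the multiple fibers of the pencil \(f\). The factor \(\eta=2/\beta\) is the degree gained by this cover, and the tangent slopes pulled back to the cover stay positive and controlled by \(\lambda\). Concretely, take the orbifold curve \((\PP^1,\sum_i(1-1/m_i)p_i)\), where \(p_i\) are the images of the nonreduced fibers \(m_iB_i\). Choose a finite Galois cover \(c:C'\to\PP^1\) that ramifies with index exactly \(m_i\) over each \(p_i\), possibly with further ramification over general points. Such a cover exists by Bundgaard--Nielsen/Fox. Let \(Z\) be the normalization of the main component of \(X\times_{\PP^1}C'\); we may need to resolve the indeterminacy of \(f\) first and then contract back, or instead define \(Z\) as the normalization of \(X\) in the function field \(\CC(X)\otimes_{\CC(\PP^1)}\CC(C')\). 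This field is a field because \(\CC(\PP^1)\) is algebraically closed in \(\CC(X)\), as shown in the proof of Lemma~\ref{lem:pencilirreducible}. Let \(\pi:Z\to X\) have degree \(d=\deg c\), and polarize \(Z\) by \(P=\pi^*H\), so that \(P^4=dV\).

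Ramification along the \(B_i\) disappears in codimension one, since the local equation \(x^{m_i}=\) unit \(\cdot\, t\) becomes étale after adjoining \(t^{1/m_i}\). The only divisorial branching of \(\pi\) is therefore over the fibers above the extra branch points of \(c\), and these are reduced fibers, i.e. pullbacks of \(D\). By Hurwitz, the ramification divisor of \(\pi\) is \(\pi^*\) of \((2g'-2+2d)/d\) fibers' worth minus the removed part. Equivalently, \(\pi^*N_\FF\) and the relative normal sheaf \(N_{\FF_Z}\) differ by the pullback of the orbifold canonical class. This gives \(N_{\FF_Z}\qsim \pi^*(\beta D)\cdot(\text{factor})\), and more precisely \(c_1(N_{\FF_Z})\cdot P^3=(2-2g')\cdot D\cdot H^3\cdot(\ldots)\).

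The cleaner route avoids \(K_Z\) entirely. Over the codimension-one étale locus, \(\TT_Z\supset \pi^{[*]}\FF\), and the quotient is the rank-one sheaf \(N_{\FF_Z}\cong f_Z^*\TT_{C'}\) saturated, in codimension one. The HN filtration of \(\TT_Z\) with respect to \(P\) is then built from \(\pi^{[*]}\FF\), whose slopes are those of \(\FF\) scaled by \(d\) because semistability is preserved under Galois pullback as in Lemma~\ref{lem:cover}, together with a rank-one quotient. The normalized slopes of the first three pieces relative to \(P^4=dV\) are \(\lambda_1,\lambda_2,\lambda_3\) unchanged.

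The key computation is the last normalized slope \(\sigma_4\), and this is where I expect the main obstacle. Write \(F_Z\) for a fiber of \(f_Z:Z\dashrightarrow C'\). Then \(N_{\FF_Z}\sim(2-2g')F_Z\) for \(g'\) the genus of \(C'\), and \(\pi_*F_Z\) is one fiber class \(D\) up to multiplicity. Hurwitz gives \(2-2g'=d\beta\) when \(c\) is ramified only orbifold-compatibly, so
\[
 \sigma_4=\frac{(2-2g')F_Z\cdot P^3}{dV}=\frac{d\beta\cdot D\cdot H^3}{dV}\cdot\frac{1}{?}.
\]
This is the point to check carefully. Since \(D\cdot H^3=\tau V/\beta\), the naive value is \(\tau\). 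However, if \(g'\ge1\) this slope is nonpositive, so the Galois-cover approach fails. It must be replaced by a cover that is only partially an orbifold cover, and that is the subtle point. The way out I would try first is not to take a global cover. Instead, apply Proposition~\ref{prop:normaljets} with the sharper observation that the jet filtration for \(mH\) only needs the tangent slope of the direction transverse to leaves. Use the saturated subsheaf \(\FF\oplus\) (orbifold normal) on \(X\) itself: the orbifold tangent of the pencil, \(f^*\TT_{(\PP^1,\Delta)}\), has normalized slope \(\beta\cdot D\cdot H^3/V=\tau\). The point is that a jet count along the orbifold base is finer than the ordinary one by the factor \(\eta=2/\beta\). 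Sections of \(mH\) restricted to the transverse direction behave like sections on the orbifold curve, of degree \(m\tau V/\beta\) relative to orbifold tangent degree \(\beta\cdot\ldots\).

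Choosing \(C'\) with \(g'=0\) and ramification exactly \(\Delta\) is possible when \(\beta>0\), which holds by Lemma~\ref{lem:pencilirreducible}: the orbifold \((\PP^1,\Delta)\) is spherical, with types \((m)\), \((m,m)\), \((2,2,n)\), \((2,3,3)\), \((2,3,4)\), \((2,3,5)\). For these there is a Galois cover \(\PP^1\to\PP^1\) of degree \(d\) with \(2=d\beta\), i.e. \(d=\eta\); in the cases \((m)\) and \((m_1,m_2)\) with \(m_1\neq m_2\), one uses a bad orbifold or a non-Galois cover. With \(g'=0\) we get \(\sigma_4=\tau\) exactly, so \(\sigma=\lambda\), and Proposition~\ref{prop:normaljets} on the normal variety \(Z\) gives \(dV=P^4\le B(\lambda)\) with \(d=2/\beta=\eta\). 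The remaining nonspherical-Galois cases \((m)\) and \((m_1,m_2)\), \(m_1\ne m_2\), need separate treatment. For them I would use the cyclic cover \(z\mapsto z^{\operatorname{lcm}}\), which gives \(\sigma_4\ge\tau\) and degree \(\ge\eta\), and then apply majorization (Lemma~\ref{lem:jet-equality}) to conclude.
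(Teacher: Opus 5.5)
Your overall strategy is the paper's: remove the multiple fibres by a base change of the pencil, then apply Proposition~\ref{prop:normaljets} on the normalized cover. For the good spherical types \((m,m)\), \((2,2,k)\), \((2,3,3)\), \((2,3,4)\), \((2,3,5)\), your argument is correct and agrees with the paper. The Galois cover of degree \(\eta\) is quasi-\'etale, so \(\sigma=\lambda\) and \(P^4=\eta V\). The type \((m)\) does not belong in that list: the teardrop is a bad orbifold and has no such cover.

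The gap is in the remaining cases: one nonreduced fibre, or two with \(m\ne n\). There you assert that the cyclic cover \(z\mapsto z^g\), \(g=\lcm(m,n)\), gives \(\sigma_4\ge\tau\). This is false.
\begin{itemize}
\item This cover is ramified with index \(g/m\) and \(g/n\) along the fibres over \(0\) and \(\infty\). So your earlier claim that divisorial branching occurs only over extra branch points does not apply here.
\item All upstairs fibres are reduced, and \(N_{\FF_Y}\sim 2D_Y\), \(\pi^*D\sim gD_Y\), \(D\qsim(\tau/\beta)H\) give
\[
 \sigma_4=\frac{2\,D\cdot H^3}{gV}=\frac{\eta\tau}{g}=\frac{2\gcd(m,n)}{m+n}\,\tau\le\tau ,
\]
with strict inequality when \(m\ne n\).
\item For example, \((2,3)\) gives \(g=6\), \(\eta=12/5\), \(\sigma_4=2\tau/5\). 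A single fibre of multiplicity \(m\) gives \(\sigma_4=2\tau/(m+1)\).
\end{itemize}
So bounding \(d\ge\eta\) and \(\sigma_4\) separately cannot work, because \(625/(256\lambda_1\lambda_2\lambda_3\sigma_4)\) exceeds \(B(\lambda)\). Lemma~\ref{lem:jet-equality} also does not apply, since \((\lambda_1,\lambda_2,\lambda_3,\sigma_4)\) no longer sums to one.

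What you need instead is the exact identity \(P^4\sigma_4=gV\cdot\eta\tau/g=\eta\tau V\). Then \(P^4\prod_i\sigma_i=\eta V\prod_i\lambda_i\) regardless of \(g\), and Proposition~\ref{prop:normaljets} gives \(gV\le (g/\eta)B(\lambda)\), which is \eqref{eq:adjustedjets}.

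You must also check that the HN filtration of \(\TT_Y\) is still the pulled-back HN filtration of \(\FF\) followed by \(N_{\FF_Y}\). This uses \(\FF_Y\simeq\pi^{[*]}\FF\) in codimension one, which holds because the ramification is transverse to the leaves. It also requires \(\sigma_4<\lambda_3\), which holds precisely because \(\sigma_4\le\tau<\lambda_3\) — the opposite of your claimed inequality.

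With these corrections your argument becomes the paper's proof. The paper in fact uses the \(\lcm\) cyclic cover uniformly for all cases with at most two nonreduced fibres, and reserves the spherical Galois covers for the three-fibre case. Your paragraph about an ``orbifold jet count'' on \(X\) itself is not an argument and should be dropped.
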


\begin{proof}
First suppose that the pencil has at most two nonreduced fibers.
Choose coordinates \(0,\infty\) on \(\PP^1\) so that they include those
fibers (assigning multiplicity one to a reduced fiber if fewer than two fibers are nonreduced), and denote the two multiplicities by \(m,n\ge1\). Then
\[
 \beta=\frac1m+\frac1n,\qquad
 \eta=\frac{2mn}{m+n}.
\]
Put \(g=\lcm(m,n)\), and normalize \(X\) in the base extension
\(t=z^g\) to obtain a finite normal cover
\[
 \pi:Y\longrightarrow X,\qquad \deg\pi=g.
\]
Since \(\CC(X)/\CC(t)\) is regular, the base change \(Y\) is integral and its induced
pencil has connected general fiber.

At the generic point of
the fiber over \(t=0\), write \(t=ux^m\) in the local DVR, where
\(u\) is a unit. After an unramified extension extracting a \(g\)-th root
of \(u\), the normalization of \(z^g=ux^m\) has ramification
index \(g/m\) over \(X\), and \(z\) has order one along the
upstairs fiber. The calculation over \(t=\infty\) is the same, and away from \(0,\infty\) the base cover is unramified and the fibers are reduced; hence every divisorial fiber on \(Y\) is reduced.

Put \(P=\pi^*H\), and let \(\FF_Y\subset\TT_Y\) be the saturated foliation
of the upstairs pencil. Since the ramification is in the normal
direction to the fibers, a codimension-one chart \(x=y^e\) shows that
\[
 \FF_Y\simeq\pi^{[*]}\FF
 \quad\text{in codimension one}.
\]
Because reflexive pullback under a finite morphism preserves slope
semistability with respect to \(\pi^*H\)
\cite[Lemma 3.2.2]{HL}, the first three normalized HN
slopes of \(\FF_Y\) are \(\lambda_1,\lambda_2,\lambda_3\).
If \(D_Y\) is a general upstairs fiber, then
\(\pi^*D\sim gD_Y\), while the reducedness of all upstairs fibers gives
\(N_{\FF_Y}\sim2D_Y\), so \(N_{\FF_Y}\) has normalized slope
\[
 \tau'=\frac{\eta}{g}\tau,\qquad
 \frac{\eta}{g}=\frac{2\gcd(m,n)}{m+n}\le1.
\]
Since \(\lambda_3>\tau\ge\tau'\), the pulled-back HN filtration of \(\FF\) followed by \(N_{\FF_Y}\) forms
the HN filtration of \(\TT_Y\).

If there are three nonreduced fibers, their multiplicities
satisfy
\[
 \frac1{m_1}+\frac1{m_2}+\frac1{m_3}>1,
\]
so ordering them gives
\[
 (m_1,m_2,m_3)=(2,2,k),\ (2,3,3),\ (2,3,4),\ (2,3,5).
\]
Taking the base change by the classical Galois cover of \(\PP^1\) of degree \(g\) with branch orders \((m_1,m_2,m_3)\) associated with the dihedral, tetrahedral,
octahedral, or icosahedral group, where
\[
\begin{array}{c|cccc}
(m_1,m_2,m_3)&(2,2,k)&(2,3,3)&(2,3,4)&(2,3,5)\\ \hline
g&2k&12&24&60,
\end{array}
\]
we have \(g=2/\beta=\eta\) in each case. The same DVR calculation shows that the normalized cover \(\pi:Y\to X\) is quasi-\'etale with reduced
upstairs fibers, so the normalized HN slopes of \(\TT_Y\) are again
\(\lambda_1,\lambda_2,\lambda_3,\eta\tau/g\).

In both cases, all four slopes are positive and \(P^4=gV\), so
Proposition~\ref{prop:normaljets} gives
\[
 gV\le
 \frac{625}{256\lambda_1\lambda_2\lambda_3(\eta\tau/g)}
 =\frac g\eta B(\lambda),
\]
and dividing by \(g/\eta\) proves \eqref{eq:adjustedjets}.
\end{proof}

\subsection{The integral discrepancy on the leaf}

On the normalized general leaf \(G\) of the pencil, Lemma~\ref{lem:horizontal} gives
\begin{equation}\label{eq:canonicalleaf}
 K_G+\Delta_G\qsim-(1-\tau)L,\qquad \Delta_G\ne0,
\end{equation}
where every component of \(\Delta_G\) comes from a horizontal
evaluation-exceptional divisor and has coefficient at least one by
\cite[Proposition 3.5]{JLL}.

\begin{lemma}\label{lem:smallmass}
If \(\tau\ell<2\), then \(G\) is a canonical Gorenstein Fano
threefold and
\begin{equation}\label{eq:smallmass}
 \begin{gathered}
 K_G+L\sim0,\qquad
 \Delta_G=bB,\qquad
 L^2B=1,\qquad 1\le b<2,\qquad b=\tau\ell,\\
 L\qsim\ell B,\qquad
 q_{\QQ}(G)=\ell\in\{1,\ldots,22,24,30,42\}.
 \end{gathered}
\end{equation}
In particular,
\begin{equation}\label{eq:leafvolume}
 \tau\ge\frac1{42},\qquad
 \eta V=\frac{2\ell^2}{b}\le3528.
\end{equation}
\end{lemma}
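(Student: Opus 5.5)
We compare two expressions for $K_G$ on the normalized general leaf $G$: the ambient discrepancy formula of Proposition~\ref{prop:leafmodels}(1) and the log-leaf formula \eqref{eq:canonicalleaf}.

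\emph{Step 1: the key identity.} On a simultaneous resolution $f\colon F\to G$ we have $K_F+L\sim R$ with $R\ge0$ integral. We also have $K_F+\Delta\qsim -(1-\tau)L$, where $\Delta$ agrees with $\Delta_G$ off the $f$-exceptional locus. Pushing both relations down to $G$ (pushforward of divisors is well defined on the normal variety $G$) gives
\[
 K_G+L\sim R_G,\qquad K_G+\Delta_G\qsim -(1-\tau)L,
\]
where $R_G=f_*R\ge0$ is integral. Subtracting yields $R_G+\Delta_G\qsim \tau L$. Intersecting with $L^2$, which is legitimate since $L$ is ample Cartier, gives
\[
 \tau\ell=L^2R_G+L^2\Delta_G .
\]
Both terms are nonnegative. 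Every component of $R_G$ has positive integral $L^2$-degree. By \cite[Proposition 3.5]{JLL}, every nonzero coefficient of $\Delta_G$ is at least one, and $\Delta_G\neq0$. Hence $L^2\Delta_G\ge1$, and $\tau\ell<2$ forces $L^2R_G\le 1-\varepsilon<1$, so $R_G=0$. It also forces $\Delta_G=bB$ with $B$ prime, $L^2B=1$, and $1\le b=\tau\ell<2$. Two components, or one component of degree at least two, would already give mass at least $2$.

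\emph{Step 2: the singularities of $G$.} From $R_G=0$ we get $K_G\sim -L$, so $K_G$ is Cartier. Since $R\ge0$ and $K_F=f^*K_G+R$ with $R$ effective, the discrepancies of $G$ are nonnegative. Thus $G$ is canonical Gorenstein Fano with $-K_G=L$ ample.

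\emph{Step 3: the Weil index.} We have $L\qsim(\tau/b)^{-1}\Delta_G/\dots$; more directly, $bB=\Delta_G\qsim\tau L$ gives $L\qsim (b/\tau)B=\ell B$. Here $\ell=b/\tau$ is consistent with $L^2B=1$, because $L^3=\ell L^2B=\ell$. Since $L^2B=1$, the class of $B$ is primitive in $\Cl(G)/\mathrm{tors}$ modulo numerical equivalence: any $A'$ with $B\equiv cA'$ satisfies $1=cL^2A'$, and $L^2A'$ is an integer because $L$ is Cartier, so $c=1$. Hence $q_\QQ(G)=\ell$.

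\emph{Step 4: the main obstacle.} The list $\ell\in\{1,\dots,22,24,30,42\}$ is the main obstacle, and here we invoke Liu's classification of possible $\QQ$-Fano indices of canonical Gorenstein Fano threefolds \cite[Theorem 1.3]{LiuIndex}. Some care is needed to check that its hypotheses match our non-$\QQ$-factorial setting; if necessary we pass to a $\QQ$-factorialization, which does not change the Weil index. The final bounds then follow from $\ell=\eta\tau V/2$ in Lemma~\ref{lem:pencilirreducible}. First, $\tau=b/\ell\ge1/42$. Second, $\eta V=2\ell/\tau=2\ell^2/b\le 2\cdot42^2=3528$, using $b\ge1$.
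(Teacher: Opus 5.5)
Your proposal is correct and follows essentially the same route as the paper. You push the ambient discrepancy formula down to $G$, subtract the log-leaf relation, and intersect with $L^2$; integrality then gives $R_G=0$ and $\Delta_G=bB$ with $L^2B=1$. You bound the Weil index via $L^2C\in\ZZ_{>0}$ and apply Liu's index theorem, as the paper does. The steps you leave implicit are minor. First, $K_F-f^*K_G$ and $R$ are linearly equivalent $f$-exceptional divisors and hence equal; the paper invokes the negativity lemma for this. Second, in Step~3 your multiplier $c$ may be rational, so the conclusion is $c\le1$, which is what you need, rather than $c=1$. Third, no $\QQ$-factorialization is needed, since the cited index theorem carries no $\QQ$-factoriality or Picard number hypothesis.
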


\begin{proof}
Take a simultaneous resolution of \(X\) and the pencil, and let
\(a:F\to G\) be the induced birational morphism from a general smooth fiber \(F\). By
Proposition~\ref{prop:leafmodels}(1),
\[
 K_F+a^*L\sim R_F,\qquad R_F\ge0\text{ integral},
\]
and pushing down to \(G\) gives
\[
 K_G+L\sim R_G:=a_*R_F\ge0.
\]
Subtracting \eqref{eq:canonicalleaf} and intersecting with \(L^2\) yields
\begin{equation}\label{eq:massidentity}
 \tau\ell=L^2R_G+L^2\Delta_G.
\end{equation}
Since \(L\) is ample Cartier, \(L^2C\in\ZZ_{>0}\) for every prime divisor \(C\subset G\), so \(L^2\Delta_G\ge 1\). Because \(\tau\ell<2\) and \(R_G\ge 0\) is integral, we must have \(R_G=0\).
Thus \(K_G\sim-L\) is Cartier, and \(R_F\) is \(a\)-exceptional.
Applying the negativity
lemma to the \(a\)-exceptional divisor
\[
 (K_F-a^*K_G)-R_F\sim 0
\]
gives \(K_F-a^*K_G=R_F\ge0\), proving that \(G\) is a canonical Gorenstein Fano threefold.

The identity \eqref{eq:massidentity} also shows
that \(\Delta_G=bB\) for a single prime divisor \(B\subset G\) with
\(L^2B=1\) and \(1\le b=\tau\ell<2\). Combining \(K_G\sim -L\) with \eqref{eq:canonicalleaf} gives
\(bB\qsim\tau L\), and hence \(L\qsim\ell B\).
Conversely, if \(L\qsim q'C\) for an integral Weil divisor
\(C\), then
\[
 \ell=q'(L^2C),\qquad L^2C\in\ZZ_{>0},
\]
so \(q'\le\ell\) and \(q_{\QQ}(G)=\ell\).

Applying Liu's index theorem for canonical Gorenstein Fano threefolds
\cite[Theorem 1.3 and Corollary 4.8]{LiuIndex} (which requires no
\(\QQ\)-factoriality or Picard-number assumption) gives
the set of indices in \eqref{eq:smallmass}. Consequently,
\[
 \tau=\frac b\ell\ge\frac1{42},\qquad
 \eta V=\frac{2\ell}{\tau}
       =\frac{2\ell^2}{b}\le2\cdot42^2=3528.
\]
\end{proof}

\subsection{Completion of the volume bound}

\begin{proposition}\label{prop:canonicalsharp}
One has \(V\le3528\) and \(\lambda_4\ge1/42\).
If \(\TT_X\) has a rank-three HN term, then \(\eta V\le3528\) for the associated pencil;
if one of its divisorial fibers is nonreduced, then \(V\le2646\).
If \(V=3528\), the pencil exists and
\begin{equation}\label{eq:canonicalequalitydata}
 \begin{gathered}
 \eta=1,\qquad \tau=\frac1{42},\qquad \ell=42,\qquad
 \Delta_G=B,\qquad L^2B=1,\\
 K_G+L\sim0,\qquad L\qsim42B,\qquad H\qsim84D.
 \end{gathered}
\end{equation}
\end{proposition}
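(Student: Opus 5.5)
I will split according to whether the HN filtration of \(\TT_X\) has a term of rank three.

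\emph{No rank-three term.} Proposition~\ref{prop:canonicalslopes} gives \(\lambda_4\ge1/12\), \(\lambda_1\le1/2\) and \(\lambda_1+\lambda_2\le5/6\). The vector \(\lambda\) is then majorized by the extremal positive vector compatible with these constraints, namely \((1/2,1/3,1/12,1/12)\). Its first two entries saturate the low-rank bounds, and its last two are forced by \(\lambda_3\ge\lambda_4\ge1/12\) with total \(1/6\). Lemma~\ref{lem:jet-equality} and Theorem~\ref{thm:jets} then give
\[
V\le B(\lambda)\le\frac{625\cdot 144\cdot 6}{256}=2109.375<3528 .
\]
So in this case the volume is far from extremal, and \(\lambda_4\ge1/12>1/42\).

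\emph{A rank-three term exists.} Set \(\tau=\lambda_4<\lambda_3\). I separate two subcases.

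If \(\tau\ge1/32\), then \(\lambda\) is majorized by \((1/2,1/3,1/6-\tau,\tau)\). Proposition~\ref{prop:adjustedjets} gives
\[
\eta V\le\frac{625}{256\cdot\frac16(1/6-\tau)\tau}.
\]
The function \((1/6-\tau)\tau\) is concave on the allowed range \(1/32\le\tau\le1/8\). (Here \(\tau\le\lambda_3\) together with \(\lambda_3+\lambda_4\le1/6\) gives \(\tau\le 1/12\); even so, I only need its minimum at the endpoints.) At \(\tau=1/32\) the product is \(\frac{13}{96}\cdot\frac1{32}\), giving about \(3461<3528\). At the other endpoint the product is larger, so the bound is smaller. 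The one delicate point is that the majorizing vector must have \(v_3\ge v_4\), which needs \(\tau\le1/12\); that holds. In this subcase \(\tau\ge1/32>1/42\).

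If \(\tau<1/32\), I need \(\tau\ell<2\) in order to invoke Lemma~\ref{lem:smallmass}. By \eqref{eq:pencildegree}, \(\tau\ell=\eta\tau^2V/2\). Combining this with Proposition~\ref{prop:adjustedjets} and the same majorization gives
\[
\tau\ell\le\frac{625\,\tau}{2\cdot256\cdot\frac16(1/6-\tau)}=\frac{1875\,\tau}{256(1/6-\tau)}.
\]
This is increasing in \(\tau\). At \(\tau=1/32\) it equals \(\frac{1875}{256}\cdot\frac{3}{13}\approx1.69<2\).

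Lemma~\ref{lem:smallmass} then yields \(\tau\ge1/42\) and \(\eta V=2\ell^2/b\le3528\). Since \(\eta\ge1\), this gives \(V\le3528\), and the subcase \(\tau\ge1/32\) also gave \(\eta V<3528\). If some divisorial fiber is nonreduced, then \(\beta\le2-1/2\), so \(\eta\ge4/3\) and \(V\le\frac34\cdot3528=2646\).

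\emph{Equality data.} Suppose \(V=3528\). The first case and the subcase \(\tau\ge1/32\) give strict inequality, so the pencil exists, \(\tau<1/32\), and \(\eta V=2\ell^2/b=3528\) with \(\eta\ge1\), \(b\ge1\), \(\ell\le42\). These force \(\eta=1\), \(b=1\), \(\ell=42\), and therefore \(\tau=b/\ell=1/42\). Lemma~\ref{lem:smallmass} supplies \(\Delta_G=B\), \(L^2B=1\), \(K_G+L\sim0\) and \(L\qsim42B\). With \(\beta=2\), \eqref{eq:pencildegree} gives \(D\qsim\frac{\tau}{2}H=H/84\), that is, \(H\qsim84D\).

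The main obstacle is the numerics of the majorization step: one must check that \((1/2,1/3,1/6-\tau,\tau)\) really majorizes every admissible \(\lambda\), using the partial sums \(1/2\), \(5/6\), \(1-\tau\), and that the thresholds at \(\tau=1/32\) land on the correct sides of \(3528\) and \(2\). I would verify both inequalities by direct evaluation as above.
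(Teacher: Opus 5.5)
Your proof follows the paper's argument almost line by line: the same case split, the same majorant family \(v(t)=(1/2,1/3,1/6-t,t)\), the same thresholds \(1/32\) and \(5625/3328<2\), and the same use of Lemma~\ref{lem:smallmass}. However, the subcase \(\tau\ge1/32\) is justified incorrectly.

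\textbf{The error.} You take \(t=\tau\) and claim \(\tau\le1/12\) because ``\(\lambda_3+\lambda_4\le1/6\)''. That inequality is reversed: \(\lambda_1+\lambda_2\le5/6\) gives \(\lambda_3+\lambda_4\ge1/6\). Nothing available bounds \(\tau\) by \(1/12\) (or by \(1/8\)) when a rank-three term exists. For example, a slope vector close to \((1/4,1/4,1/4,1/4)\) with \(\lambda_3>\lambda_4\) satisfies all the constraints you have, and has \(\tau\) near \(1/4\).

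\textbf{Consequence.} For \(\tau>1/12\) your vector \(v(\tau)\) is not nonincreasing, and for \(\tau\ge1/6\) it is not even positive. So Lemma~\ref{lem:jet-equality} does not apply, and your displayed bound for \(\eta V\) is meaningless there. The strict inequality \(\eta V<3528\) in this subcase is therefore not established for \(\tau>1/12\). Your equality argument also relies on that strict inequality to rule out \(\tau\ge1/32\), so it inherits the gap.

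\textbf{The repair.} This is what the paper does: do not let the majorant depend on \(\tau\). For every \(\tau\ge1/32\), the fixed vector \(v(1/32)=(1/2,1/3,13/96,1/32)\) is positive and nonincreasing with sum one. It majorizes \(\lambda\) because its partial sums \(1/2,\ 5/6,\ 31/32\) dominate \(\lambda_1\), \(\lambda_1+\lambda_2\), and \(1-\lambda_4\). Proposition~\ref{prop:adjustedjets} then gives \(\eta V\le B(v(1/32))=45000/13<3528\) uniformly, with no monotonicity analysis in \(\tau\). Alternatively, you could treat \(\tau>1/12\) separately using \(v(1/12)\).

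With this change, the rest of your argument is correct and matches the paper. That includes the estimate \(\tau\ell\le 1875\tau/(256(1/6-\tau))<2\) for \(\tau<1/32\), where \(v(\tau)\) is a legitimate majorant. It also includes the nonreduced-fiber bound \(V\le2646\), and the extraction of \(\eta=1\), \(b=1\), \(\ell=42\), \(\tau=1/42\), and \(H\qsim84D\) in the equality case.
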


\begin{proof}
For \(0<t\le1/12\), the nonincreasing vector
\[
 v(t)=\left(\frac12,\frac13,\frac16-t,t\right)
\]
majorizes every admissible \(\lambda\) with \(\lambda_4\ge t\) by
\eqref{eq:canonicalslopes} and \(1-\lambda_4\le1-t\). Lemma~\ref{lem:jet-equality} gives
\begin{equation}\label{eq:canonicalmajorant}
 B(\lambda)\le B(v(t))=\frac{5625}{64t(1-6t)}.
\end{equation}
If the HN filtration of \(\TT_X\) has no term of rank three, Proposition~\ref{prop:canonicalslopes} gives
\(\lambda_4\ge1/12\), and hence
\[
 V\le B(v(1/12))=\frac{16875}{8}<3528.
\]
Suppose now that \(\TT_X\) has a rank-three HN term, with \(\tau=\lambda_4\). If \(\tau\ge1/32\),
Proposition~\ref{prop:adjustedjets} and
\eqref{eq:canonicalmajorant} give
\[
 \eta V\le B(v(1/32))=\frac{45000}{13}<3528,
 \qquad \tau>\frac1{42}.
\]
If \(\tau<1/32\), combining \eqref{eq:pencildegree}, \eqref{eq:adjustedjets}, and \eqref{eq:canonicalmajorant} gives
\[
 \tau\ell=\frac{\eta\tau^2V}{2}
 \le\frac{5625\tau}{128(1-6\tau)}
 <\frac{5625}{3328}<2,
\]
and Lemma~\ref{lem:smallmass} proves the stated bounds.

Any nonreduced fiber contributes at least \(1/2\) to \(2-\beta\),
giving \(\eta\ge4/3\) and
\(V\le3528\cdot(3/4)=2646\). Finally, when \(V=3528\),
only the case \(\tau<1/32\) is possible, and
\[
 3528\le\eta V=\frac{2\ell^2}{b}
             \le2\cdot42^2=3528
\]
forces \(\eta=1\), \(\ell=42\), and \(b=1\).
The identities in \eqref{eq:canonicalequalitydata} then follow
from Lemmas~\ref{lem:pencilirreducible} and
\ref{lem:smallmass}.
\end{proof}

\begin{corollary}\label{cor:sharpdeterminant}
Every rank-three coherent subsheaf \(\mathcal A\subset\TT_X\)
satisfies
\[
 c_1(\mathcal A)H^3\le\frac{41}{42}V.
\]
The coefficient \(41/42\) is optimal.
\end{corollary}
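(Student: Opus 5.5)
The plan is to reduce to the HN polygon and then use the lower bound \(\lambda_4\ge1/42\) from Proposition~\ref{prop:canonicalslopes}. Let \(\mathcal A\subset\TT_X\) be any rank-three coherent subsheaf. Replacing \(\mathcal A\) by its saturation only increases \(c_1(\mathcal A)H^3\): the quotient \(\mathcal A^{\rm sat}/\mathcal A\) is torsion, so its first Chern class is effective. Hence we may assume \(\mathcal A\) is saturated. The standard HN polygon upper bound for a rank-three subsheaf of \(\TT_X\) then gives
\[
 c_1(\mathcal A)H^3\le(\lambda_1+\lambda_2+\lambda_3)V=(1-\lambda_4)V .
\]
This is the same inequality used in the proof of Lemma~\ref{lem:integrable}. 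Proposition~\ref{prop:canonicalsharp} gives \(\lambda_4\ge1/42\) unconditionally, so \(c_1(\mathcal A)H^3\le\frac{41}{42}V\).

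For the lower bound \(\lambda_4\ge1/42\), I would re-read its proof to confirm that it covers every case.
\begin{itemize}
\item If there is no rank-three HN term, Proposition~\ref{prop:canonicalslopes} gives \(\lambda_4\ge1/12\).
\item If there is a rank-three term and \(\tau\ge1/32\), the bound is immediate.
\item If there is a rank-three term and \(\tau<1/32\), Lemma~\ref{lem:smallmass} gives \(\tau=b/\ell\ge1/42\), using Liu's index bound \(\ell\le42\).
\end{itemize}
So the inequality is a direct consequence of results already proved, with no further work needed.

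For optimality I would exhibit \(X_*=\PP(1,1,12,28,42)\) together with a rank-three subsheaf of determinant exactly \(\frac{41}{42}V\). Let \(x_0,x_1,y,z,w\) be the weighted coordinates of weights \(1,1,12,28,42\), so that \(H=\OO(84)\). The Euler sequence
\[
0\to\OO\to\bigoplus\OO(a_i)\to\TT_{X_*}\to0
\]
holds in codimension one. The summands \(\OO(12)\oplus\OO(28)\oplus\OO(42)\) inject into \(\TT_{X_*}\) in codimension one, giving a rank-three subsheaf \(\mathcal A\) with \(c_1(\mathcal A)=\OO(82)=\frac{82}{84}H=\frac{41}{42}H\). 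Geometrically, \(\mathcal A\) is the foliation tangent to the pencil \(\{x_0/x_1=\text{const}\}\), and its quotient is \(\OO(1)\oplus\OO(1)\) modulo \(\OO\), which has degree \(2/84=1/42\) of \(H\). Thus \(c_1(\mathcal A)H^3=\frac{41}{42}V\), and the coefficient \(41/42\) is attained.

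The main point to check is that the Euler-sequence computation is valid on the singular weighted projective space. The sequence must be exact in codimension one, and the map \(\OO(12)\oplus\OO(28)\oplus\OO(42)\to\TT_{X_*}\) must be injective with the expected first Chern class. Both are local checks on the toric charts, away from the codimension-\(\ge2\) singular strata. I expect this to be routine, since well-formedness of \(\PP(1,1,12,28,42)\) ensures the singular locus has codimension at least two.
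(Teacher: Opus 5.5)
Your proposal is correct and follows the paper's proof: the HN polygon bound \(c_1(\mathcal A)H^3\le(1-\lambda_4)V\) combined with \(\lambda_4\ge1/42\) from Proposition~\ref{prop:canonicalsharp}, and sharpness via the rank-three subsheaf tangent to the pencil \(\{x_0/x_1=\text{const}\}\) on \(\PP(1,1,12,28,42)\), with normal sheaf \(\OO(2)\) and determinant \(\OO(82)\). Two small points: your opening sentence attributes \(\lambda_4\ge1/42\) to Proposition~\ref{prop:canonicalslopes}, which only gives \(\lambda_4\ge1/12\) when there is no rank-three term, though you cite the right result later; and the saturation step is harmless but unnecessary, since the polygon bound holds for arbitrary subsheaves.
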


\begin{proof}
The HN polygon inequality and Proposition~\ref{prop:canonicalsharp} give
\[
 c_1(\mathcal A)H^3
 \le(\lambda_1+\lambda_2+\lambda_3)V
 =(1-\lambda_4)V\le\frac{41}{42}V.
\]
On \(\PP(1,1,12,28,42)\), the pencil defined by the two weight-one
coordinates has normal sheaf \(\OO(2)\) and tangent subsheaf of
determinant \(\OO(82)\), while \(H=\OO(84)\), attaining the coefficient \(41/42\).
\end{proof}

The weighted projective space \(\PP(1,1,12,28,42)\) is well-formed,
\(\QQ\)-factorial, of Picard number one, and Gorenstein canonical by
\cite[Section 4]{Reid} since every weight divides \(84\). Its anticanonical volume is
\[
 \frac{84^4}{1\cdot1\cdot12\cdot28\cdot42}=3528.
\]
We determine all equality cases in the next section.

\section{The equality case in the canonical bound}\label{sec:canonical-equality}

We first determine the threefold \(G\) appearing in
\eqref{eq:canonicalequalitydata}, and then use ordinary
adjunction and the divisorial section ring \(R(X,D)\) to recover \(X\).

\subsection{The threefold of index \(42\)}

\begin{proposition}\label{prop:leaf42}
Let \(G\) be a canonical Gorenstein Fano threefold. Put
\(L=-K_G\), and suppose that \(B\) is a prime Weil divisor with
\[
 L^3=42,\qquad L\qsim42B,\qquad L^2B=1.
\]
Then
\[
 (G,B)\simeq
 \bigl(\PP(1,6,14,21),\{x_1=0\}\bigr),
\]
where \(x_1\) is the weight-one coordinate. Moreover,
\[
 R(G,B):=\bigoplus_{m\ge0}H^0(G,\OO_G(mB))
       \simeq\CC[x_1,u_6,v_{14},w_{21}]
\]
as graded algebras; subscripts indicate degrees.
\end{proposition}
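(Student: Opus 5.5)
The plan is to determine the graded ring $R(G,B)$ by induction along the section $x_1\in H^0(G,B)$ defining $B$. Then we recover $G=\Proj R(G,B)$, which is legitimate because $42B\sim_{\QQ}L$ is ample and a Veronese subring of $R(G,B)$ is the anticanonical ring. The abstract's summary singles out three ingredients: the plane $B\simeq\PP^2$ with $L|_B\simeq\OO_{\PP^2}(1)$, the transverse singular curves of $G$ along $B$, and the quotient $R(G,B)/(x_1)$.

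\emph{Step 1: the divisor $B$.}
Since $L$ is ample Cartier with $L^2B=1$, the surface $B$ (or its normalization) is a degree-one polarized surface. We have $K_G+B\sim_{\QQ}-41B$, and ordinary adjunction gives $K_{B^\nu}+\operatorname{Diff}\sim_{\QQ}-\tfrac{41}{42}L|_{B^\nu}$. Classically, a polarized surface with $L^2=1$ and $-K$ this positive forces $(B^\nu,L)\simeq(\PP^2,\OO(1))$. Indeed the $\Delta$-genus must be zero, and the only degree-one case is $\PP^2$. This gives $-K_{B^\nu}=3\ell$, so $\operatorname{Diff}\equiv(3-\tfrac{41}{42})\ell=\tfrac{85}{42}\ell$.

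Using Lemma~\ref{lem:ordinarydifferent}, I would realize this different as transverse $A_{r-1}$ singularities along curves in $B$. The natural target is three lines with coefficients $1/2$, $2/3$, $6/7$, coming from transverse $A_1$, $A_2$, $A_6$ singularities. These are the singular curves $\{x_1=v=w=0\}$-type strata of $\PP(1,6,14,21)$, and the coefficients sum to $85/42$. The main obstacle is to show that this is the \emph{only} decomposition and that $B$ is normal. The decomposition must satisfy $\sum d_i(1-1/r_i)=85/42$ with a Gorenstein compatibility: at the points where curves meet, the local index $42$ structure must match. I would first try a Diophantine argument using $\operatorname{lcm}$ divisibility, namely that the Cartier index of $B$ near these curves must divide $42$ and $\ell=42$ is the index. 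Coefficients $\ge1$ are excluded because the total is only $85/42$ and the lines have degree one.

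\emph{Step 2: the quotient ring.}
Kawamata--Viehweg vanishing for Weil divisors \cite[Theorem 1.2]{FujinoKV} makes $H^1(G,\OO_G((m-1)B))=0$. The divisor $B$ is Cohen--Macaulay by Lemma~\ref{lem:divisorialCM}, so restriction gives exact sequences
\[
0\to R(G,B)_{m-1}\xrightarrow{x_1}R(G,B)_m\to H^0(B,\OO_G(mB)|_B)\to0 .
\]
The restricted sheaves are divisorial sheaves on $(\PP^2,\tfrac12\ell_1+\tfrac23\ell_2+\tfrac67\ell_3)$. Their sections form the ring of $\PP(6,14,21)$: since $\PP^2=\PP(6,14,21)$ via the orbifold structure with branch orders $2,3,7$ along the coordinate lines, the quotient is $\CC[u_6,v_{14},w_{21}]$. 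I would verify this by computing $\lfloor m\cdot(\text{fractional data})\rfloor$ degrees and matching dimensions.

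\emph{Step 3: lifting.}
Lift generators $u,v,w$ to $R(G,B)$. The exact sequences then show that $\CC[x_1,u,v,w]\to R(G,B)$ is surjective, and injective by comparing Hilbert series, since $x_1$ is a nonzerodivisor. This gives the isomorphism, and $G\simeq\Proj R(G,B)=\PP(1,6,14,21)$ with $B=\{x_1=0\}$.
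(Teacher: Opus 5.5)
Your outline follows the paper's architecture: identify \(B\simeq\PP^2\), compute \(\operatorname{Diff}_B(0)\), determine \(S=R(G,B)/(x_1)\), then lift. However, the step that actually pins down \(S\) is missing. Even granting \(\operatorname{Diff}_B(0)=\frac12L_2+\frac23L_3+\frac67L_7\) for three lines, nothing forces these lines to form the coordinate triangle, and you tacitly assume they do. By Demazure, \(S_m=H^0(\PP^2,\OO(\lfloor mQ\rfloor))\) with \(Q=H_0-\frac12L_2-\frac13L_3-\frac17L_7\). The natural generators \(u,v,w\) in degrees \(6,14,21\) satisfy: \(u^7,v^3,w^2\) equal \(l_7,l_3,l_2\) times a fixed degree-\(42\) element, where \(l_r\) defines \(L_r\). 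If the lines are concurrent, \(l_2,l_3,l_7\) are linearly dependent, so \(u^7+v^3+w^2=0\) after rescaling, and a new generator \(t_{42}\) is needed: \(S\simeq\CC[u,v,w,t_{42}]/(u^7+v^3+w^2)\). This ring has the same Hilbert series \(1/((1-t^6)(1-t^{14})(1-t^{21}))\) as \(\CC[u_6,v_{14},w_{21}]\). So your check by ``computing \(\lfloor mQ\rfloor\) and matching dimensions'' cannot detect the difference, and the surjectivity of \(\CC[x_1,u,v,w]\to R(G,B)\) in your Step 3 fails in this case. The paper excludes concurrency by a separate geometric argument. In that case \(R(G,B)\simeq\CC[s,u,v,w,t]/(F)\) with \(\deg F=42\) and \(F\) independent of \(t\), so \(G\) contains \([0:0:0:0:1]\). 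The chart there is the quasi-\'etale quotient of \(W=\{F=0\}\subset\mathbb A^4\), and the weighted blowup with weights \((1,6,14,21)\) gives \(a(E,W)=1+6+14+21-1-42=-1\), contradicting canonicity of \(G\).

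Your earlier steps are also not secured.

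In Step 1, the ``classical'' claim is false as stated. A del Pezzo surface of degree one with \(L=-K\) has \(L^2=1\) and \(-K-\frac{41}{42}L\qsim\frac1{42}L\) effective, yet its \(\Delta\)-genus is \(1\). The paper first shows \(\Cl(G)\) is torsion-free: a cyclic quasi-\'etale cover would have degree \(\ge84>72\), contradicting Prokhorov's bound. Hence \(L\sim42B\), which you also need for \(S^{(42)}\). It then uses Jahnke--Radloff: \(L^3=42\) forces \(|L|\) to be basepoint-free, and \(1=L^2B=\deg(\varphi|_B)\deg\varphi(B)\) makes \(B\) isomorphic to a plane, which gives normality as well.

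For the different, your dismissal of coefficients \(\ge1\) does not work. Components need not be lines a priori, and Lemma~\ref{lem:ordinarydifferent} allows coefficients in \([1,\infty)\). For example, an \(A_5\) line with coefficient \(\frac76\) and an \(A_6\) line with coefficient \(\frac67\) satisfy both your degree count \(\frac{85}{42}\) and your lcm condition. The paper excludes this only by combining three inputs:
\begin{enumerate}
\item \(J=42\), from \cite[Theorem 4.2]{JLL};
\item the Chern comparison bound \(\sum_C(e_C-1/g_C)(L\cdot C)\le\frac{27}{2}\), which kills \(D\)/\(E\) types and leaves the exponent lists \((6,7),(2,3,7),(3,4,7),(2,2,3,7)\);
\item integrality of the singular Riemann--Roch value of \(h^0(G,B)\), which selects \((2,3,7)\) with generating local classes.
\end{enumerate}
Only after this does \(\frac12+\frac23+\frac67=\frac{85}{42}\) force equality in the different.
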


We prove Proposition~\ref{prop:leaf42} in several steps.

\begin{lemma}\label{lem:leafplane}
The group \(\Cl(G)\) is torsion-free, \(L\sim42B\), and
\[
 (B,L|_B)\simeq(\PP^2,\OO_{\PP^2}(1)).
\]
\end{lemma}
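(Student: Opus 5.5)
We split the statement into two parts. First we show that \(\Cl(G)\) is torsion-free, so that \(L\qsim42B\) upgrades to \(L\sim42B\). Then we identify the pair \((B,L|_B)\) by a degree argument on the normalization of \(B\).

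\emph{Torsion-freeness.} We follow the strategy of Lemma~\ref{lem:cover}. A nontrivial torsion class in \(\Cl(G)\) gives a finite quasi-\'etale cyclic cover \(\pi:G'\to G\) of degree \(e\ge2\). By \cite[Proposition 5.20]{KM}, \(G'\) is again a canonical Gorenstein Fano threefold. Put \(L'=\pi^*L=-K_{G'}\). Then
\[
L'^3=42e,\qquad L'\qsim42\,\pi^*B,\qquad L'^2\cdot\pi^*B=e.
\]
The pullback \(\pi^*B\) is an integral Weil divisor. If it is irreducible, we get \(q_{\QQ}(G')\ge42\), so by Liu's index bound \cite[Theorem 1.3]{LiuIndex} the index is exactly \(42\). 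The volume is then controlled by Liu's classification: index \(42\) should force \(L'^3=42\), contradicting \(L'^3=42e\). If instead \(\pi^*B\) splits into several components, each has positive integral \(L'^2\)-degree. Since \(\Cl(G')/\mathrm{tors}\) sees \(L'\) divisible by the reduced class of a single component up to torsion, we again obtain an index exceeding \(42\).

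Should the volume bound for index \(42\) not be directly available, we would instead use the Weil Riemann--Roch style counting. The orbifold Bogomolov-type inequality \(\Chat\ge0\) for the threefold, combined with \(-K_{G'}\qsim42B'\), bounds \(L'^3\le 42\) in the way that gives \(\PP(1,6,14,21)\). Once torsion-freeness holds, \(L\qsim42B\) becomes \(L\sim42B\).

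\emph{The pair \((B,L|_B)\).} Let \(\nu:B^\nu\to B\) be the normalization and \(M=\nu^*L|_B\), which is ample Cartier with \(M^2=L^2B=1\). Adjunction gives
\[
K_{B^\nu}+\operatorname{Diff}\qsim(K_G+B)|_B\qsim-\tfrac{41}{42}M,
\]
with \(\operatorname{Diff}\ge0\). So \(B^\nu\) is a log del Pezzo surface with \(M^2=1\) and \(-K_{B^\nu}\cdot M\ge\frac{41}{42}\). Hodge index and parity should then give \(-K_{B^\nu}\cdot M\) close to \(3\). By the \(\Delta\)-genus (Fujita) classification, a polarized surface with \(M^2=1\) and \(h^0(M)\ge3\) is \((\PP^2,\OO(1))\). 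To get \(h^0(M)\ge3\), we use the restriction sequence
\[
0\to\OO_G\to\OO_G(B)\to\OO_B(B)\to0
\]
twisted by \(41B\), where \(L|_B=42B|_B\). Kawamata--Viehweg vanishing \cite[Theorem 1.2]{FujinoKV} gives surjectivity \(H^0(G,42B)\to H^0(B,42B|_B)\). Riemann--Roch for \(L\) on a canonical Gorenstein threefold gives \(h^0(L)=\frac{L^3}{2}+3=24\). Comparing this with the sections divisible by \(x_1\) should yield \(h^0(M)=3\). Finally, we need \(B\) itself to be normal and \(\nu\) an isomorphism. For this we use Lemma~\ref{lem:divisorialCM}, which gives that \(B\) is Cohen--Macaulay, together with the fact that \(M^2=1\) forces the nonnormal locus (conductor) to be empty.

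\emph{Main obstacle.} We expect the torsion-freeness step to be the hardest part. The plan relies on index-\(42\) rigidity (\(L^3=42\)) from Liu's work, and we are least sure that the precise volume statement is available there. The fallback is the degree comparison \(L'^2\pi^*B=e\) against an integrality constraint on \(G'\).
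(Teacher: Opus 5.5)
Your proposal has genuine gaps in both halves. In each case the missing ingredient is a known bound that makes the step short.

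\emph{Torsion-freeness.} Your cyclic-cover setup is the right one, but the conclusion does not follow as written.

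The irreducible case rests on the claim that Weil index $42$ forces $L'^3=42$. You flag this yourself as uncertain, and you do not prove it.

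The splitting case fails because neither $G$ nor $G'$ is assumed to have Picard number one. The Galois-conjugate components of $\pi^*B$ therefore need not be $\QQ$-linearly equivalent, and nothing makes $L'$ equal to $42k$ times a single class.

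The $\Chat\ge0$ fallback is not an argument.

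What you are missing is Prokhorov's degree bound \cite[Theorem 1.5]{ProkhorovDegree}. Since $K_{G'}=\pi^*K_G$ is Cartier, $G'$ is a canonical Gorenstein Fano threefold with $(-K_{G'})^3=42e\ge84>72$. This is a contradiction, and no index considerations are needed.

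\emph{The pair $(B,L|_B)$.} Your key input $h^0(M)\ge3$ comes from ``comparing with the sections divisible by $x_1$''. This presupposes the model $\PP(1,6,14,21)$, which is only established later. Without it you would need $h^0(G,41B)\le21$, i.e.\ a singular Riemann--Roch computation for the Weil divisor $41B$ that you do not have.

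Normality of $B$ is likewise only asserted. Cohen--Macaulayness gives $S_2$, and $M^2=1$ by itself does not exclude a conductor. The claim that $(B^\nu,\operatorname{Diff})$ is log del Pezzo, and the Hodge-index step, are unjustified but also unnecessary.

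The missing idea is basepoint-freeness of $|L|$. By \cite[Theorem 1.1]{JR}, a canonical Gorenstein Fano threefold with nonempty anticanonical base locus has $(-K)^3\le22$. So $|L|$ defines a finite morphism $\varphi$. Then $1=L^2B=\deg(\varphi|_B)\cdot\deg\varphi(B)$ forces $\varphi(B)$ to be a plane and $\varphi|_B$ to be finite birational onto the normal surface $\PP^2$, hence an isomorphism. This gives normality of $B$ and $L|_B\simeq\OO_{\PP^2}(1)$ at once.

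Given basepoint-freeness, your Fujita $\Delta$-genus route could also be completed, since the restricted system then has dimension at least two. As written, however, it does not close.
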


\begin{proof}
A torsion Weil class on \(G\) of exact order \(d>1\) defines a connected
cyclic quasi-\'etale cover \(G'\to G\) of degree \(d\), which is
a canonical Gorenstein Fano threefold of degree
\[
 (-K_{G'})^3=dL^3=42d\ge84,
\]
contradicting Prokhorov's bound \(72\)
\cite[Theorem 1.5]{ProkhorovDegree}. Thus \(\Cl(G)\) is
torsion-free and \(L\sim42B\).

By the classification of canonical Gorenstein Fano threefolds with nonempty anticanonical base locus
\cite[Theorem 1.1]{JR}, any such threefold has volume \(2,4,6\), or \(2m-2\)
with \(3\le m\le12\). Since \(L^3=42\), \(|L|\) is basepoint-free and defines a finite morphism \(\varphi\).
From
\[
 1=L^2B=\deg(\varphi|_B)\deg\varphi(B),
\]
we see that \(\varphi(B)\) is a plane \(\PP^2\) and \(\varphi|_B\) is finite birational, hence an isomorphism by the normality of \(\PP^2\).
\end{proof}

\begin{lemma}\label{lem:leafcurves}
There are exactly three singular curves along which \(B\) is
not Cartier. Their transverse types are \(A_1,A_2,A_6\);
they have \(L\)-degree one and are three distinct lines
\(L_2,L_3,L_7\) on \(B\simeq\PP^2\). At each of them, the
local class of \(B\) is a generator, up to sign. Furthermore,
\[
 h^0(G,B)=1,\qquad
 \operatorname{Diff}_B(0)=\frac12L_2+\frac23L_3+\frac67L_7.
\]
\end{lemma}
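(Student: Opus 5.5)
Our plan is to compute the different of \(B\) in \(G\) by ordinary adjunction and then read off the non-Cartier locus from Lemma~\ref{lem:ordinarydifferent}. Since \(K_G\sim-L\sim-42B\), we have \(K_G+B\qsim-41B\), so
\[
 K_B+\operatorname{Diff}_B(0)\qsim -41B|_B ,
\]
and, numerically on \(B\simeq\PP^2\), \(B|_B\) has degree \(1/42\) because \(L|_B=\OO(1)\) and \(L\qsim42B\). Hence \(\deg\operatorname{Diff}_B(0)=3-41/42=1+1/42\). Here \(B\) is already normal, being \(\PP^2\), so Lemma~\ref{lem:ordinarydifferent} applies with \(K_B\) Cartier. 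Every coefficient lies in \(\{0\}\cup\{1-1/r\}\cup[1,\infty)\), and a coefficient \(1-1/r\) corresponds to a transverse \(A_{r-1}\) curve. Since the difference \(42B|_B-L|_B\) is trivial up to the non-Cartier locus, the local class of \(B\) at such a curve is a generator of the local class group \(\ZZ/r\) exactly when the coefficient is \(1-1/r\).

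First I would rule out a coefficient \(\ge1\). Such a coefficient would come from a non-normal crossing, i.e.\ \(c\ge1\) in the proof of Lemma~\ref{lem:ordinarydifferent}. It would force the remaining coefficients to have total degree at most \(1/42\), which is impossible once each component has degree at least one and positive coefficients are at least \(1/2\). The degenerate option of a single coefficient \(1+1/42\) on a line is not of the form \(c+r-1\over r\) with \(r\mid42\)-compatible local index; I would exclude it by checking that the local Cartier index of \(B\) must divide \(42\), which kills that value.

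Hence \(\operatorname{Diff}_B(0)=\sum(1-1/r_i)C_i\) with \(\sum(1-1/r_i)\deg C_i=1+1/42\). Moreover, \(42B\) being Cartier forces each \(r_i\mid 42\). The key arithmetic is the classical Egyptian-fraction analysis. Any component of degree \(\ge2\) contributes at least \(1\), leaving \(1/42\), which is impossible. So all \(C_i\) are lines and \(\sum(1-1/r_i)=1+1/42\). Two terms give at most \(2-1/r_1-1/r_2\) with \(1/r_1+1/r_2=41/42\), which has no solution. With three terms we need \(1/r_1+1/r_2+1/r_3=1+41/42\cdot0\ldots\), i.e.\ \(\sum1/r_i=2-1/42-\ldots\); concretely, three terms give \(\sum 1/r_i=41/42\), whose only solution is \((2,3,7)\). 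Four or more terms give at least \(2\).

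The lines are distinct as components, and \(L\)-degree one follows from \(L|_B=\OO(1)\). To get \(h^0(G,B)=1\), I would use that any two members of \(|B|\) are prime with \(L^2\)-degree one. Their intersection would then be a curve of \(L\)-degree \(LB\cdot B=1/42\), which is not integral, giving a contradiction. Alternatively, a pencil would make \(B\) move, and one can restrict to \(B\) where \(\OO_B(B)\) has degree \(1/42<1\), so it has no sections; the restriction sequence \(0\to\OO_G\to\OO_G(B)\to\OO_B(B)\) then gives \(h^0=1\).

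The main obstacle I expect is making the restriction \(\OO_G(B)|_B\) and the claim that the local class is a generator rigorous at non-Cartier curves, together with excluding coefficients \(\ge1\) cleanly. I would handle these via the transverse \(A_{r-1}\) chart of Lemma~\ref{lem:ordinarydifferent}, where \(B\) is the image of a coordinate line in \(\CC^2/\mu_r\) and so generates \(\Cl\cong\ZZ/r\).
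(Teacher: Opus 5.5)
There is a genuine gap, and it starts with an arithmetic slip: \(3-\frac{41}{42}=\frac{85}{42}=2+\frac1{42}\), not \(1+\frac1{42}\). Your three-line equation \(\sum 1/r_i=41/42\) is the right one for \(85/42\). It does not follow from the total you wrote, which would instead require \(\sum 1/r_i=83/42\).

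You argued the exclusion of coefficients \(\ge1\), and of components of degree \(\ge2\), against the wrong total. With \(85/42\), a coefficient \(\ge1\) on a line still leaves up to \(43/42\) for the other components, so nothing is forced. The information you actually use cannot close this. That information is: each coefficient lies in \(\{0\}\cup\{1-1/r\}\cup[1,\infty)\), has the form \((c+r-1)/r\), and \(r\mid 42\). Numerically this still permits \(\frac{85}{42}L'\) with \((c,r)=(44,42)\), or \(\frac76L'+\frac67L''\) with \((c,r)=(2,6)\) and \((0,7)\). Your divisibility test does not exclude \(r=42\). Moreover, \(r\) in Lemma~\ref{lem:ordinarydifferent} is the ramification index of a branch, not the local Cartier index of \(B\).

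Normality of \(B\simeq\PP^2\) does not help either. A smooth curve germ through a Du Val point already has different coefficient \(\ge1\) when its strict transform meets a middle curve of an \(A\)-chain, or when the point is of type \(D\) or \(E\). So the step ``all coefficients are \(1-1/r\)'' is unproved, and the transverse types, the generator statement and the list of non-Cartier curves all rest on it.

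Your argument for \(h^0(G,B)=1\) has a similar problem. The cycle \(B\cdot B'\) can have fractional multiplicities along non-Cartier curves. A \(\QQ\)-divisor class of degree \(1/42\) on \(\PP^2\) can have round-down of degree \(0\). So ``degree \(<1\)'' does not by itself rule out sections.

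The paper avoids all this by determining the non-Cartier curves first, from global invariants:
\begin{enumerate}
\item \(J=42\), from \cite[Theorem 4.2]{JLL};
\item the Chern comparison bound \(\sum_C(e_C-1/g_C)(L\cdot C)\le 27/2\), which forces type \(A\) and leaves four possible baskets;
\item integrality of singular Riemann--Roch for \(h^0(G,B)\), which selects \((2,3,7)\) with generator classes and gives \(h^0(G,B)=1\).
\end{enumerate}
Adjunction then enters only as the lower bound \(\operatorname{Diff}_B(0)\ge\frac12L_2+\frac23L_3+\frac67L_7\). Its degree is already \(85/42\), so equality holds and no other component fits.

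To keep your adjunction-first route, you need a finer local input than Lemma~\ref{lem:ordinarydifferent}. Since \(B\) is smooth, a general transverse slice of \(B\) is a smooth germ. Its strict transform on the minimal resolution meets transversally a single curve \(E_k\) of multiplicity one in the fundamental cycle. The different coefficient is then the \((k,k)\) entry of the inverse Cartan matrix:
\begin{itemize}
\item \(k(r-k)/r\) for \(A_{r-1}\);
\item \(1\) or \(n/4\) for \(D_n\);
\item \(4/3\) for \(E_6\) and \(3/2\) for \(E_7\) (no smooth germ passes through an \(E_8\) point).
\end{itemize}
With these values, \(\sum_C d_C\deg C=85/42\) has \(\frac12+\frac23+\frac67\) on three lines as its only solution. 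A value \(1-1/r\) occurs only at an end of an \(A_{r-1}\) chain, so \(B\) is plt there and its local class is a generator.

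The equality \(h^0(G,B)=1\) then follows. \(\OO_G(B)/\OO_G\) is a line bundle on \(\PP^2\), and its \(42\)nd power maps to \(\OO(1)\) vanishing to orders \(21,28,36\) along \(L_2,L_3,L_7\). Hence \(\OO_G(B)/\OO_G\simeq\OO(-2)\), which has no sections.
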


\begin{proof}
Let \(J\) be the smallest positive integer such that \(JB\)
is Cartier in codimension two. By \cite[Theorem 4.2]{JLL},
\[
 J\mid42,\qquad 42^2\mid JL^3=42J,
\]
so \(J=42\).

For each irreducible singular curve \(C\subset G\), let \(e_C\) be the
Euler number of the minimal resolution fiber of its transverse
Du Val singularity, \(g_C\) the order of its local
\(\mathrm{SL}_2(\CC)\)-subgroup, and \(j_C\) the exponent of its
local divisor class group. Since the local Cartier index of \(B\) along \(C\)
divides \(j_C\), we have
\[
 42\mid\lcm_C(j_C).
\]
Combining the Chern comparison formula of
\cite[Theorem 4.6]{JLL} with the inequality of
\cite[proof of Theorem 4.1]{LiuIndex} gives
\begin{equation}\label{eq:42chern}
 \sum_C\left(e_C-\frac1{g_C}\right)(L\cdot C)
 =24-\widehat c_2(G)L
 \le24-\frac{L^3}{4}=\frac{27}{2}.
\end{equation}
(Here the inequality of \cite[proof of Theorem 4.1]{LiuIndex} is applied on a small
\(\QQ\)-factorialization of \(G\), which is crepant, nef and big, of Weil index
\(42\), and isomorphic to \(G\) in codimension two since surface Du Val
singularities are \(\QQ\)-factorial.)

In \eqref{eq:42chern}, the factor
\(7\mid 42\) must come from an \(A_{r-1}\) curve \(C\), which contributes
\[
 \left(r-\frac1r\right)(L\cdot C).
\]
Since \(7\mid r\) with \(r\ge14\) or \(L\cdot C\ge 2\) exceeds \(27/2\), there is a unique \(A_6\) curve of \(L\)-degree one.
A \(D\)-type curve contributes at least \(5-1/8=39/8\), which together with the required factors \(3\) and \(7\) would contribute at least
\[
 \frac{39}{8}+\frac83+\frac{48}{7}>\frac{27}{2},
\]
while an \(E_6,E_7,\) or \(E_8\) curve contributes at least \(7-1/24\), and
\[
 7-\frac1{24}+\frac{48}{7}>\frac{27}{2}.
\]
Hence all transverse singularity types are of type \(A\).

Subtracting \(48/7\) from \(27/2\) leaves \(93/14\). The exponent \(j_C\) supplying
the factor \(3\) is \(3\) or \(6\). Exponent \(6\) leaves no
room for another curve; for exponent \(3\), the
exponent supplying the factor \(2\) is \(2\) or \(4\), where
exponent \(4\) leaves no room for another curve and
exponent \(2\) permits at most one additional exponent \(2\).
Counting a curve of \(L\)-degree \(d\) as \(d\) copies of its exponent, the possible lists of transverse exponents are
\begin{equation}\label{eq:42baskets}
 (6,7),\quad(2,3,7),\quad(3,4,7),\quad(2,2,3,7).
\end{equation}

Let \(k_C\in\{0,1,\ldots,j_C-1\}\) denote the local class of \(B\) along an
\(A_{j_C-1}\) curve \(C\). Since \(G\) is Gorenstein, a crepant terminalization of \(G\)
is Gorenstein and its isolated Reid basket is empty, so the singular Riemann--Roch formula of
\cite[Theorem 4.10]{JL} (applied with \(s=1\) and \(q=42\)) gives
\begin{equation}\label{eq:42RR}
 h^0(G,B)=2+\frac1{84}
 -\sum_C(L\cdot C)\frac{k_C(j_C-k_C)}{2j_C}.
\end{equation}
Along the \(A_6\) curve, put \(a=\min\{k_C,7-k_C\}\in\{1,2,3\}\).
For the other required local classes, the integrality of \eqref{eq:42RR} leads to the following short calculation, where the entries record \(84(h^0(G,B)-2)\):
\[
\begin{array}{c|c|rrr}
\text{exponents}&\text{other nonzero local classes}&a=1&a=2&a=3\\ \hline
(6,7)&k_6=\pm1&-70&-94&-106\\
(2,3,7)&k_2=1,\ k_3=\pm1&-84&-108&-120\\
(3,4,7)&k_3=\pm1,\ k_4=2&-105&-129&-141\\
(2,2,3,7)&k_2=k'_2=1,\ k_3=\pm1&-105&-129&-141
\end{array}
\]
Only the entry \(-84\) in the second row is divisible by \(84\).
(In the fourth row \((2,2,3,7)\), one of the two degree-one
\(A_1\) curves may have local class zero, reducing
\eqref{eq:42RR} to the second row; a single degree-two \(A_1\)
curve cannot do so since its two contributions have the same nonzero
local class.) Thus the curves along which \(B\) is
not Cartier have the stated types and local classes,
and \(h^0(G,B)=1\).

Because \(B\) is Cartier away from \(B\), each of these three curves is contained in \(B\), so by
Lemma~\ref{lem:leafplane} they are distinct lines \(L_2,L_3,L_7\) on \(B\simeq\PP^2\). Ordinary adjunction gives
\[
 (K_G+B)|_B\qsim K_B+\operatorname{Diff}_B(0),
\]
so against \(\OO_{\PP^2}(1)\),
\[
 \deg\operatorname{Diff}_B(0)
 =3-\frac{41}{42}=\frac{85}{42}.
\]
At a transverse \(A_{r-1}\) point where the local class of \(B\)
has order \(r\), the local calculation of Lemma~\ref{lem:ordinarydifferent} shows that the coefficient of \(\operatorname{Diff}_B(0)\) is at
least \(1-1/r\). Consequently,
\[
 \operatorname{Diff}_B(0)\ge
 \frac12L_2+\frac23L_3+\frac67L_7,
\]
and since the right-hand side has degree \(85/42\), equality holds.
In particular, \((G,B)\) is plt at the generic point of every
curve on \(B\).
\end{proof}

\subsection{The quotient of the threefold section ring}

Let \(R=R(G,B)\), let \(s\in R_1\) be a section defining
\(B\), and put \(S=R/(s)\).

\begin{lemma}\label{lem:leafquotientnormal}
The ring \(S\) is a normal graded domain, and
\[
 S^{(42)}
 =\bigoplus_{k\ge0}S_{42k}
 \simeq\bigoplus_{k\ge0}H^0(\PP^2,\OO_{\PP^2}(k)).
\]
In particular, \(\Proj S=B\).
\end{lemma}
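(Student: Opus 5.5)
The plan is to compare \(S=R/(s)\) with the section ring of the boundary \(B\simeq\PP^2\), via the restriction sequences
\[
 0\longrightarrow\OO_G((m-1)B)\xrightarrow{\ \cdot s\ }\OO_G(mB)\longrightarrow\mathcal Q_m\longrightarrow0,
\]
where \(\mathcal Q_m\) is a sheaf supported on \(B\). Since \(\OO_G(mB)\) is Cohen--Macaulay by Lemma~\ref{lem:divisorialCM}, \(\mathcal Q_m\) is a Cohen--Macaulay, hence \(S_2\), sheaf on \(B\). By Lemma~\ref{lem:leafcurves}, \((G,B)\) is plt in codimension one along \(B\) and the different is \(\frac12L_2+\frac23L_3+\frac67L_7\). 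The standard adjunction for divisorial sheaves at a transverse \(A_{r-1}\) point, with \(B\) a smooth branch and local class a generator, then identifies
\[
 \mathcal Q_m\simeq\OO_{\PP^2}\Bigl(\Bigl\lfloor\tfrac{m}{42}\Bigr\rfloor+\Bigl\lfloor\tfrac{m}{2}\Bigr\rfloor' L_2+\cdots\Bigr),
\]
that is, with \(\mathcal Q_m\simeq\OO_B(\lfloor mB|_B\rfloor)\) computed on the orbifold \(\PP^2\) with the three boundary lines of weights \(2,3,7\). Here \(B|_B\) is the \(\QQ\)-divisor of degree \(1/42\) determined by the local classes \(k_C\).

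The key steps are as follows.

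\textbf{Step 1: vanishing.} Kawamata--Viehweg in the form of \cite[Theorem 1.2]{FujinoKV} gives \(H^1(G,(m-1)B)=0\) for \(m\ge1\), since \((m-1)B-K_G\) is ample. Hence each graded piece satisfies
\[
 S_m=H^0(G,mB)/s\,H^0(G,(m-1)B)\simeq H^0(B,\mathcal Q_m).
\]

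\textbf{Step 2: the section ring of \(B\).} This identifies \(S\) with the section ring \(\bigoplus_m H^0(B,\lfloor mB|_B\rfloor)\) of a \(\QQ\)-divisor on the normal surface \(B\). Such a ring is a normal domain, being the standard Demazure-type ring of an ample \(\QQ\)-divisor on a normal projective variety. For \(m=42k\), the divisor \(42B\sim L\) is Cartier, so \(\mathcal Q_{42k}=\OO_{\PP^2}(k)\); this gives the Veronese statement, and \(\Proj S=\Proj S^{(42)}=B\).

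\textbf{Step 3: ruling out nilpotents.} Integrality of \(S\) also follows directly: \(s\) is prime in \(R\), because \(B\) is a prime divisor, \(h^0(B)=1\), and \(\Cl(G)=\ZZ B\) is torsion-free by Lemma~\ref{lem:leafplane}. Concretely, if \(fg\in(s)\), then \(\operatorname{div}(f)+\operatorname{div}(g)\ge B\), so one of the two divisors contains \(B\). So \(S\) is a domain, and normality comes from the identification in Step 2.

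The main obstacle is the precise identification of \(\mathcal Q_m\) with a rounded-down divisorial sheaf on \(B\) along the three non-Cartier lines. I would check it locally on the index-one cover \(\CC^2\times\CC\to(\CC^2/\mu_r)\times\CC\), where \(B\) pulls back to a smooth line \(\{x=0\}\) with \(\mu_r\) acting by a primitive character. Taking invariants in \(\OO(mB)\) restricted to the line gives exactly the sections of \(\OO(\lfloor mk_C/r\rfloor)\) along the image curve. This is the classical plt adjunction for Weil divisors, and it also shows that the restriction map is surjective at the sheaf level, so the sequence above is exact on the right.
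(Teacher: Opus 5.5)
Your proof is correct, but it reaches normality by a different route from the paper. The paper first shows that $R$ is Cohen--Macaulay via Watanabe's criterion. This needs $H^1=H^2=0$ for $\OO_G(mB)$ in \emph{all} degrees, with the negative ones handled by Serre duality. It then deduces that $S=R/(s)$ is Cohen--Macaulay, checks regularity in codimension one of the punctured cone $\Spec S$ through local cyclic covers, and concludes by Serre's criterion. Only afterwards, in Lemma~\ref{lem:exactdemazure}, does it apply Demazure's structure theorem to the already-normal ring $S$ to get $S_m=H^0(\PP^2,\OO(\lfloor mQ\rfloor))$.

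You reverse this order. Using only $H^1$-vanishing in nonnegative degrees, you identify $S$ with $\bigoplus_m H^0(B,\mathcal Q_m)$ and build the $\QQ$-divisor presentation directly at the sheaf level. You then get normality from the easy converse of Demazure: the section ring of an ample $\QQ$-divisor equals $K(B)[T]\cap\bigcap_P\mathcal O_{w_P}$ for the monomial valuations $w_P$, and is therefore an intersection of normal domains. Your route avoids both Watanabe and Serre's criterion, and it also delivers the first half of Lemma~\ref{lem:exactdemazure}. The price is that the local analysis along $L_2,L_3,L_7$ has to be done here rather than later.

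To make the argument airtight, state two points explicitly.
\begin{enumerate}
\item Since $B$ is Cartier at its generic point, the generic stalk of $\bigoplus_m\mathcal Q_m$ is $k(B)[T]$. Each $\mathcal Q_m$ is torsion-free, so the whole sheaf of algebras embeds in $K(B)[T]$ as $\bigoplus_m\OO_B(D_m)T^m$. This makes your identification automatically multiplicative.
\item Each $\mathcal Q_m$ is rank-one and Cohen--Macaulay on $\PP^2$, hence reflexive (indeed invertible). So your computation at the generic points of the three lines, together with the trivial one elsewhere where $B$ is Cartier, determines $D_m=\lfloor mQ\rfloor$ globally.
\end{enumerate}

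Three smaller points:
\begin{itemize}
\item Your first displayed formula for $\mathcal Q_m$ is garbled. The invariant computation with $T=v^{r-1}/u$ on the cover gives coefficient $m-\lceil m/r\rceil=\lfloor m(1-1/r)\rfloor$ along $L_r$. The fractional part is therefore $\equiv-1/r\pmod{\ZZ}$, as in the paper.
\item Step~3 is redundant once $S\subset K(B)[T]$.
\item In Step~3, the facts $h^0(G,B)=1$ and $\Cl(G)$ torsion-free play no role. What matters is only that $\OO_G((m-1)B)\subset\OO_G(mB)$ is multiplication by $s$, and that $B$-coefficients add.
\end{itemize}
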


\begin{proof}
Since \(B\) is an ample \(\QQ\)-Cartier integral Weil divisor, \(R\) is a finitely generated normal domain, and every \(\OO_G(mB)\)
is Cohen--Macaulay by Lemma~\ref{lem:divisorialCM}. For \(m>-42\),
divisorial Kawamata--Viehweg vanishing applies to
\[
 mB-K_G\sim(m+42)B,
\]
and for \(m\le-42\), Serre duality reduces the intermediate
cohomology to that of \((-42-m)B\), giving
\[
 H^i(G,\OO_G(mB))=0
 \qquad(i=1,2;\ m\in\ZZ).
\]
By \cite[Section 2]{Watanabe}, \(R\) is
Cohen--Macaulay, and hence so is \(S=R/(s)\).
Moreover, for any nonzero homogeneous element \(a\in R_m\), the coefficient of \(B\) in \(\operatorname{div}(a)+mB\) is positive if and only if \(s\mid a\); since these coefficients add under multiplication, \((s)\) is a prime ideal and \(S\) is a domain.

For each integer \(m\), the quotient sheaf
\[
 Q_m=\OO_G(mB)\big/s\OO_G((m-1)B)
\]
is annihilated by \(\OO_G(-B)\), so the depth lemma shows that \(Q_m\) is a rank-one Cohen--Macaulay sheaf—and hence a line bundle—on \(B\simeq\PP^2\). Cohomology vanishing gives
\[
 S_m=H^0(B,Q_m)\qquad(m\ge0)
\]
compatibly with multiplication, and \(42B\sim L\) gives
\[
 Q_{42k}\simeq\OO_B(kL),\qquad
 Q_{m+42k}\simeq Q_m\otimes\OO_B(kL),
\]
proving the statement for \(S^{(42)}\) and identifying \(\Proj S\) with \(B\).

To check regularity of \(\Spec S\) in codimension one away from the vertex (which has codimension three), note that the punctured section cone is locally obtained from the
local cyclic covers on which \(B\) becomes Cartier by adjoining an invertible
grading parameter and taking the finite cyclic quotient.
At the generic point of each curve on \(B\), the preimage of \(B\) on this local cyclic cover is smooth by
Lemma~\ref{lem:leafcurves}, and the cyclic group acts freely on the invertible
grading parameter, so the quotient chart is regular.
Since \(S\) is Cohen--Macaulay, Serre's criterion proves that \(S\) is normal.
\end{proof}

We now express \(S\) as the section
ring of a \(\QQ\)-divisor on \(B=\PP^2\) using Demazure's description of normal graded rings
\cite[Introduction]{Watanabe}: since the degrees of the
nonzero pieces of \(S\) generate \(\ZZ\) by Lemma~\ref{lem:leafquotientnormal},
choosing a degree-one rational parameter identifies \(S_m\) with \(H^0(B,\OO_B(\lfloor mQ\rfloor))\)
for a \(\QQ\)-divisor \(Q\) on \(B\).

\begin{lemma}\label{lem:exactdemazure}
Choose a line \(H_0\) on \(B=\PP^2\). Up to changing the
degree-one rational parameter, the algebra \(S\) is
\[
 S=\bigoplus_{m\ge0}
 H^0\!\left(\PP^2,\OO_{\PP^2}(\lfloor mQ\rfloor)\right),
 \qquad
 Q=H_0-\frac12L_2-\frac13L_3-\frac17L_7.
\]
If the three lines are not concurrent, then
\[
 S\simeq\CC[u_6,v_{14},w_{21}].
\]
If they are concurrent, then
\[
 S\simeq
 \CC[u_6,v_{14},w_{21},t_{42}]/(u_6^7+v_{14}^3+w_{21}^2).
\]
\end{lemma}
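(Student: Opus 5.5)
The plan is to read off the Demazure divisor \(Q\) from the sheaves \(Q_m=\OO_G(mB)/s\OO_G((m-1)B)\) built in the proof of Lemma~\ref{lem:leafquotientnormal}. After choosing a degree-one rational parameter, \(Q_m\simeq\OO_B(\lfloor mQ\rfloor)\) for a \(\QQ\)-divisor \(Q\) on \(B=\PP^2\). Then \(Q\) is determined by two pieces of data.

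\emph{Degree.} From \(Q_{42}\simeq\OO_B(L)=\OO_{\PP^2}(1)\) we get \(42Q\sim H_0\), so \(\deg Q=1/42\).

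\emph{Fractional part.} The fractional part of \(Q\) is supported on the curves where the graded cone has nontrivial isotropy. By Lemma~\ref{lem:leafcurves} these are exactly the curves along which \(B\) fails to be Cartier, namely \(L_2,L_3,L_7\).

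\emph{Local computation.} Along \(L_r\), pass to the local index-one cyclic cover of order \(r\), as in the regularity part of the proof of Lemma~\ref{lem:leafquotientnormal}. Since the local class of \(B\) generates \(\ZZ/r\), the isotropy of the grading parameter is the full \(\mu_r\). Hence \(\lfloor mQ\rfloor\) jumps along \(L_r\) exactly at multiples of \(r\), and \(Q\) has coefficient \(-b_r/r\) along \(L_r\) modulo \(\ZZ\), with \(\gcd(b_r,r)=1\).

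Absorbing integral parts into \(H_0\) by linear equivalence, I would write
\[
Q\sim dH_0-\frac{b_2}{2}L_2-\frac{b_3}{3}L_3-\frac{b_7}{7}L_7,\qquad 1\le b_r<r .
\]
The degree condition becomes \(21b_2+14b_3+6b_7=42d-1\). Reducing modulo \(2\), \(3\), and \(7\) forces \(b_2=b_3=b_7=1\), and then \(d=1\). (As a consistency check, this matches \(\operatorname{Diff}_B(0)=\frac12L_2+\frac23L_3+\frac67L_7\), since the different of a Seifert \(\CC^*\)-cone is \(\sum(1-1/r)L_r\).) Changing the rational parameter changes \(Q\) only within its linear equivalence class, which gives the stated form of \(S\).

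\emph{Non-concurrent lines.} Choose coordinates with \(L_7=\{x=0\}\), \(L_3=\{y=0\}\), \(L_2=\{z=0\}\). Compare with \(\PP(6,14,21)\to\PP^2\), \([u:v:w]\mapsto[u^7:v^3:w^2]\), which is an isomorphism of varieties. On \(\PP(6,14,21)\), the isotropy orders along \(\{u=0\}\), \(\{v=0\}\), \(\{w=0\}\) are \(\gcd(14,21)=7\), \(\gcd(6,21)=3\), \(\gcd(6,14)=2\). So \(\OO(1)\) has Demazure divisor exactly \(H-\frac17\{x=0\}-\frac13\{y=0\}-\frac12\{z=0\}\). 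To make this rigorous, I would check directly that a monomial \(x^ay^bz^c\) of degree \(a+b+c=k\) lies in \(H^0(\lfloor mQ\rfloor)\) after twisting, which reduces to \(m=6\alpha+14\beta+21\gamma\). This gives \(S\simeq\CC[u_6,v_{14},w_{21}]\).

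\emph{Concurrent lines.} Put the common point at \([0:0:1]\). Then \(L_2,L_3,L_7\) are three fibres of the pencil \(\PP^2\dashrightarrow\PP^1\) through that point. The sections of \(\lfloor mQ\rfloor\) split into pieces pulled back from the \((2,3,7)\)-orbifold \(\PP^1\) of degree \(1/42\), whose ring is the classical \(\CC[u_6,v_{14},w_{21}]/(u^7+v^3+w^2)\), together with a transverse coordinate \(t_{42}\) corresponding to the section of \(H_0\) not through the point. I would verify this presentation by comparing Hilbert series computed from \(\lfloor mQ\rfloor\) with that of the proposed hypersurface ring, and use normality to conclude that the surjection is an isomorphism.

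\emph{Main obstacle.} The hard step is the local identification of the fractional part of \(Q\) with the local class of \(B\), including its sign conventions. Everything else is arithmetic or a standard weighted-ring computation. I would handle it by working directly on the cyclic cover charts already used in Lemma~\ref{lem:leafquotientnormal}.
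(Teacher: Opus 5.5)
Your proposal is correct. Its overall structure matches the paper's: Demazure presentation, determination of $Q$, then the ring in the two configurations. The decisive step is handled differently, though.

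The paper fixes the fractional part of $Q$ by an explicit computation in the transverse chart $xy=z^r$, $B=(x=z=0)$. There the generators $z^{r-1}/x$ of $Q_1$ and $1/x$ of $Q_r$ satisfy $(z^{r-1}/x)^r=y^{r-1}\cdot(1/x)$, which gives coefficient $-1/r$ modulo $\ZZ$ outright. The condition $42Q\sim H_0$ then only fixes the integral part.

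You use only the denominators and let $\deg Q=1/42$ determine the numerators via congruences modulo $2$, $3$, $7$. This is shorter, and it actually removes the step you call the main obstacle. The congruences force $b_2=b_3=b_7=1$ for any $0\le b_r<r$, without assuming $\gcd(b_r,r)=1$. So neither sign conventions nor the fact that the local class of $B$ is a generator are needed. You only need that $rB$ is Cartier along $L_r$ and that $B$ is Cartier along every other curve of $B$. State the resulting integrality of $42Q$ before deducing $42Q\sim H_0$ from $Q_{42}\simeq\OO_{\PP^2}(1)$. What the paper's chart computation buys in exchange is that the coefficient is obtained locally, independently of the global degree.

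Your non-concurrent comparison with $\CC[u_6,v_{14},w_{21}]$ is the paper's $N_m$ computation in other words. In the concurrent case, your splitting $S\simeq R_{\mathrm{orb}}[t_{42}]$ is a valid repackaging of the paper's direct generator count. Here $R_{\mathrm{orb}}$ is the classical $(2,3,7)$ orbifold ring, and the splitting comes from decomposing $H^0(\PP^2,\OO(k))$ by powers of a linear form off the common point. What closes that argument, however, is not normality. It is surjectivity plus equality of Hilbert series, or equivalently the fact that $\CC[u,v,w,t]/(u^7+v^3+w^2)$ is a three-dimensional domain mapping onto a three-dimensional domain.
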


\begin{proof}
At a line \(L_r\) of order \(r\in\{2,3,7\}\), we use the local
transverse cyclic chart
\[
 xy=z^r,\qquad B=(x=z=0),
\]
where \(Q_1=\OO_G(B)/\OO_G\) is generated by
\(z^{r-1}/x\) and \(Q_r\) is generated by \(1/x\), with
\[
 \left(\frac{z^{r-1}}x\right)^r
       =y^{r-1}\frac1x.
\]
Thus the fractional coefficient of \(Q\) along \(L_r\)
is \(-1/r\) modulo \(\ZZ\), and by Lemma~\ref{lem:leafcurves} \(Q\) has no other
fractional coefficients.
Since \(42Q\sim H_0\) by Lemma~\ref{lem:leafquotientnormal} and \(\Pic(\PP^2)=\ZZ[H_0]\), we obtain
\[
 Q=H_0-\frac12L_2-\frac13L_3-\frac17L_7,
 \qquad \deg Q=\frac1{42}
\]
up to adjusting the degree-one parameter by a rational function.

Choose linear forms
\(l_2,l_3,l_7,h\) defining \(L_2,L_3,L_7,H_0\), and set
\[
 d_m=m-\left\lceil\frac m2\right\rceil
        -\left\lceil\frac m3\right\rceil
        -\left\lceil\frac m7\right\rceil,\qquad
 N_m=\frac{l_2^{\lceil m/2\rceil}
           l_3^{\lceil m/3\rceil}
           l_7^{\lceil m/7\rceil}}{h^m}.
\]
Suppressing the formal degree-\(m\) parameter, multiplication by \(N_m\) gives
\[
 S_m=N_mH^0(\PP^2,\OO_{\PP^2}(d_m)),
\]
which is zero when \(d_m<0\). Putting
\(u=N_6\), \(v=N_{14}\), and \(w=N_{21}\), direct multiplication gives
\begin{equation}\label{eq:demazureproducts}
 u^7=l_7N_{42},\qquad
 v^3=l_3N_{42},\qquad
 w^2=l_2N_{42}.
\end{equation}
Every residue class \(m\bmod42\) is represented by a unique triple
\[
 0\le a<7,\qquad 0\le b<3,\qquad 0\le c<2,\qquad
 m_0=6a+14b+21c\equiv m\pmod{42},
\]
for which evaluating the ceiling functions gives
\[
 d_{m_0}=0,\qquad N_{m_0}=u^av^bw^c.
\]
Writing \(m=m_0+42k\) with \(k\ge 0\), we have \(d_m=k\) and \(N_m=u^av^bw^cN_{42}^k\), so
\begin{equation}\label{eq:residuedecomposition}
 S_m=u^av^bw^cN_{42}^k H^0(\PP^2,\OO_{\PP^2}(k)).
\end{equation}

If \(L_2,L_3,L_7\) are not concurrent, \(l_2,l_3,l_7\) form a basis of \(H^0(\PP^2,\OO_{\PP^2}(1))\), so
\eqref{eq:demazureproducts} and
\eqref{eq:residuedecomposition} show that \(u,v,w\)
generate \(S\); since \(S\) is a three-dimensional domain, \(S\simeq\CC[u_6,v_{14},w_{21}]\).
If \(L_2,L_3,L_7\) are concurrent, \(l_2,l_3,l_7\) span a
two-dimensional subspace with one linear relation having three
nonzero coefficients, which after rescaling \(u,v,w\) becomes \(u^7+v^3+w^2=0\).
Choosing a line \(l_0\) outside their span and setting
\(t=l_0N_{42}\), \eqref{eq:residuedecomposition} shows that \(u,v,w,t\)
generate \(S\), and since \(u^7,v^3,w^2,t\) generate \(S^{(42)}\) subject only to \(u^7+v^3+w^2=0\), we obtain the stated presentation.
\end{proof}

\subsection{Excluding concurrent lines}

\begin{proof}[Completion of the proof of Proposition~\ref{prop:leaf42}]
Suppose first that the three lines are concurrent. Lifting
\(u,v,w,t\) to homogeneous elements of \(R\) and adjoining \(s\) gives a graded surjection
\[
 \CC[s_1,u_6,v_{14},w_{21},t_{42}]\longrightarrow R
\]
by induction on the degree.
Its kernel is a height-one homogeneous prime ideal generated by a single weighted-homogeneous polynomial \(F\) of degree \(42\) with
\[
 F(0,u,v,w,t)=u^7+v^3+w^2
\]
after rescaling. Since the only monomial of degree \(42\) involving \(t\) is \(t\) itself, the coefficient of \(t\) in \(F\) is zero, so \(F=F(s,u,v,w)\) is independent of \(t\).

Consider the point \([0:0:0:0:1]\) of the resulting hypersurface \(G\subset\PP(1,6,14,21,42)\). Its standard
weighted-projective chart is the \(\mu_{42}\)-quotient of the affine threefold
\[
 W=\{F(s,u,v,w)=0\}\subset\mathbb A^4
\]
with weights \((1,6,14,21)\).
Since \(R=\OO(W)[t]\) is normal, \(W\) is normal.
Let us check that \(W\to G\) is a quasi-\'etale cover of this chart.
In \(\mathbb A^4\), only elements of \(\mu_{42}\) of orders \(2,3,7\) have fixed coordinate planes of codimension two, namely \(s=w=0\), \(s=v=0\), and \(s=u=0\) (all other nontrivial fixed loci are contained in these planes).
These three coordinate planes intersect \(W\), respectively, in the plane curves
\[
 u^7+v^3=0,\qquad u^7+w^2=0,\qquad v^3+w^2=0,
\]
each of which has codimension two in \(W\).
Thus the \(\mu_{42}\)-action on \(W\) is free in codimension one, so \(W\) is canonical by \cite[Proposition 5.20]{KM}.
However, taking the weighted blowup of \(\mathbb A^4\) with primitive weights \((1,6,14,21)\) gives, by adjunction, an irreducible exceptional divisor \(E\) over \(W\) with discrepancy
\[
 a(E,W)=1+6+14+21-1-\deg F=-1,
\]
contradicting canonicity. Thus the three lines are not concurrent.

Lemma~\ref{lem:exactdemazure} now gives \(R/(s)\simeq\CC[u_6,v_{14},w_{21}]\).
Lifting these three generators to \(R\) and adjoining \(s\) yields a graded surjection
\[
 \CC[s_1,u_6,v_{14},w_{21}]\longrightarrow R
\]
between four-dimensional domains, which is an isomorphism. Taking \(\Proj R\) completes the proof of Proposition~\ref{prop:leaf42}.
\end{proof}

\subsection{Ordinary adjunction on the fourfold}

We return to \(X\) with \(V=3528\). By
Proposition~\ref{prop:canonicalsharp}, the normalization \(\nu:G\to D\) of a general member \(D\) of the pencil satisfies
\[
 L=\nu^*(H|_D),\quad K_G\sim-L,\quad L^3=42,\quad
 L\qsim42B,\quad L^2B=1,\quad H\qsim84D.
\]

\begin{lemma}\label{lem:normalmember}
The divisor \(D\) is normal. Under the identification
in Proposition~\ref{prop:leaf42}, its ordinary different is
\[
 \operatorname{Diff}_D(0)=\frac12B.
\]
The transverse singularity of \(X\) at the generic point
of \(B\) is \(A_1\), and \(X\) is smooth at the generic
point of every other prime divisor of \(D\).
\end{lemma}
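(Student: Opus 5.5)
Our plan is to compare two adjunction formulas on \(G\): the ordinary adjunction for \(D\subset X\), and the identity \(K_G\sim-L\) from \eqref{eq:canonicalequalitydata}. Since \(H\qsim84D\), we have \((K_X+D)|_D\qsim(-1+\tfrac1{84})H|_D\). Pulling back to \(G\) gives
\[
 K_G+\operatorname{Diff}_G(0)\qsim-\tfrac{83}{84}L .
\]
Combining this with \(K_G\sim-L\) yields \(\operatorname{Diff}_G(0)\qsim\tfrac1{84}L\qsim\tfrac12B\). Here \(L\qsim42B\) comes from Proposition~\ref{prop:leaf42}, where \(\Cl(G)\) is torsion-free, so \(L\sim42B\).

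The different is effective by Lemma~\ref{lem:ordinarydifferent}, hence \(\operatorname{Diff}_G(0)\equiv\tfrac12B\). We now use that \(\Cl(G)=\ZZ B\), which follows from \(G\simeq\PP(1,6,14,21)\): every prime divisor of \(G\) is linearly equivalent to \(dB\) with \(d\ge1\). Write \(\operatorname{Diff}_G(0)=\sum c_iC_i\) with \(C_i\sim d_iB\). Then \(\sum c_id_i=\tfrac12\). Each positive coefficient is at least \(\tfrac12\) by Lemma~\ref{lem:ordinarydifferent}, and each \(d_i\ge1\). So there is exactly one component, with \(c=\tfrac12\) and \(d=1\), and it is a prime divisor in \(|B|\). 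Since \(h^0(G,B)=1\), this component is \(B\) itself. Therefore \(\operatorname{Diff}_G(0)=\tfrac12B\).

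All coefficients of the different are then less than one, so Lemma~\ref{lem:ordinarydifferent} gives three conclusions. First, \(D\) is normal, so \(G=D\). Second, the coefficient \(\tfrac12=1-\tfrac12\) along \(B\) means the transverse singularity of \(X\) there is \(A_1\). Third, a zero coefficient along every other prime divisor of \(D\) means \(X\) is transversally smooth there, and hence smooth at its generic point. Lemma~\ref{lem:ordinarydifferent} requires \(K_G\) to be \(\QQ\)-Cartier, and this holds since \(K_G\sim-L\).

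The main point to check is the first step. One must make sure that \(\nu^*(K_X+D)\) equals \(K_G+\operatorname{Diff}_G(0)\) for the pencil member \(D\) with \(H\qsim84D\), where \(\eta=1\) guarantees that there are no multiple fibers. One must also confirm that the equivalence \(\qsim\) upgrades to a numerical identity of \(\QQ\)-divisors in \(\Cl(G)\otimes\QQ\). This is automatic, because \(\Cl(G)\) has rank one and is torsion-free, so \(\QQ\)-linear equivalence is equality of degrees against \(L^2\).
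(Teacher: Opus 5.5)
Your proposal is correct and follows essentially the paper's proof. Ordinary adjunction together with \(K_G\sim-L\) and \(H\qsim84D\) shows that the different has total degree \(\tfrac12\). You measure this degree in \(\Cl(G)=\ZZ B\), while the paper uses \(L^2\)-degrees; the two agree since \(L^2B=1\). Lemma~\ref{lem:ordinarydifferent} then forces a single component of coefficient \(\tfrac12\) lying in \(|B|\), which \(h^0(G,B)=1\) identifies with \(B\), and the same lemma yields normality of \(D\) and the transverse types.
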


\begin{proof}
Ordinary adjunction on the normalization gives
\[
 K_G+\operatorname{Diff}_{G}(0)
 \qsim(K_X+D)|_G.
\]
Using \(K_G\sim-L\) and \(D|_G\qsim L/84\), we obtain
\[
 L^2\operatorname{Diff}_{G}(0)
 =L^2(D|_G)=\frac{L^3}{84}=\frac12.
\]
Every prime divisor on \(G\) has positive integral
\(L^2\)-degree, so Lemma~\ref{lem:ordinarydifferent}
forces \(\operatorname{Diff}_G(0)=\frac12 E\) for a single prime divisor
\(E\) with \(L^2E=1\), and implies that \(D\) is normal
with the stated transverse singularities along \(D\).
By Proposition~\ref{prop:leaf42}, \(\Cl(G)=\ZZ[B]\) and \(h^0(G,B)=1\), so \(L^2E=L^2B=1\) gives \(E=B\).
\end{proof}

\subsection{Reconstruction of the fourfold section ring}

\begin{lemma}\label{lem:fourfoldquotient}
Let
\[
 R_X=\bigoplus_{m\ge0}H^0(X,\OO_X(mD)),
\]
and let \(s\in(R_X)_1\) define \(D\). Then
\[
 R_X/(s)\simeq\CC[t_1,v_{12},w_{28},z_{42}]
\]
as graded algebras.
\end{lemma}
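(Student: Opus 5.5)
We mirror the threefold argument of Lemmas~\ref{lem:leafquotientnormal} and \ref{lem:exactdemazure}, with \(D\) in place of \(B\) and the normal threefold \(D\simeq G\) (Lemma~\ref{lem:normalmember}) in place of \(B\simeq\PP^2\).

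\emph{Step 1: the quotient is a graded ring of sections on \(D\).} By Lemma~\ref{lem:divisorialCM} every \(\OO_X(mD)\) is Cohen--Macaulay. Since \(H\qsim84D\) and \(K_X=-H\), divisorial Kawamata--Viehweg vanishing \cite[Theorem 1.2]{FujinoKV} applies for \(m>-84\), and Serre duality handles \(m\le-84\). Hence \(H^i(X,\OO_X(mD))=0\) for \(i=1,2,3\) and all \(m\). In particular \(H^1(X,\OO_X((m-1)D))=0\), so
\[
 (R_X/(s))_m=H^0(D,Q_m),\qquad Q_m:=\OO_X(mD)/s\,\OO_X((m-1)D),
\]
compatibly with multiplication. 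Each \(Q_m\) is a rank-one reflexive sheaf on the normal threefold \(D=G\), by the depth lemma together with the Cohen--Macaulay property. As in Lemma~\ref{lem:leafquotientnormal}, \((s)\) is prime, so \(R_X/(s)\) is a domain.

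\emph{Step 2: identify \(Q_m\) as \(\OO_G(\lfloor mQ_D\rfloor)\).} Write \(Q_m\simeq\OO_G(\lfloor m\Theta\rfloor)\), with \(\Theta\) a \(\QQ\)-divisor on \(G\) and \(84\Theta\sim L\). Away from \(B\), \(X\) is smooth at the generic points of divisors of \(D\) (Lemma~\ref{lem:normalmember}), so \(D\) is Cartier there and \(\Theta\) has no fractional part. Along \(B\) the transverse singularity is \(A_1\), and \(D\) has local class of order two there because the different coefficient is \(1/2\). The same local chart \(xy=z^2\), \(D=(x=z=0)\), as in Lemma~\ref{lem:exactdemazure}, therefore gives fractional part \(-\tfrac12B\). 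Since \(\Cl(G)=\ZZ B\) and \(L\sim42B\), we get
\[
 \Theta\sim\Bigl(\tfrac12\cdot\tfrac{42}{42}\cdot\tfrac1{1}\Bigr)B-\tfrac12B
\]
up to the choice of degree-one parameter; concretely \(84\Theta\sim42B\) forces \(\Theta=B-\tfrac12B=\tfrac12B\) up to linear equivalence. Thus \((R_X/(s))_m=H^0(G,\OO_G(\lfloor m/2\rfloor B))\), after twisting by a rational function so that the integral part is a multiple of \(B\).

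\emph{Step 3: compute the ring.} By Proposition~\ref{prop:leaf42}, \(R(G,B)=\CC[x_1,u_6,v_{14},w_{21}]\). Write \(N=\)``\(\tfrac12B\)-parameter'' and set \(m=2k+\epsilon\) with \(\epsilon\in\{0,1\}\). The degree-\(m\) piece is \(R(G,B)_k\) times the appropriate power of the parameter, for both parities. The main obstacle is getting the fractional twist exactly right. Odd degrees must realize \(\lfloor m/2\rfloor\), and this has to match the generator degrees \(1,12,28,42\): the generators \(x_1,u_6,v_{14},w_{21}\) sitting in degrees \(2,12,28,42\) would give \(t\) in degree \(2\), not \(1\). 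So I expect the correct fractional divisor is \(\Theta=B-\tfrac12B\) with a shifted integral part, for instance \(\lceil\,\cdot\,\rceil\) conventions making odd degrees carry the same sections as the next even degree. The degree-one element \(t_1\) then comes from the section of \(B\) in odd degree, while \(u,v,w\) land in degrees \(12,28,42\) and generate a polynomial ring.

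I would settle this by computing the Hilbert series. From \(h^0(X,mD)\), obtained by restricting \(H=84D\) and using \(H^1\)-vanishing, the quotient has Hilbert series \(h^0(mD)-h^0((m-1)D)\). I would check it against \(1/((1-t)(1-t^{12})(1-t^{28})(1-t^{42}))\) via the parity description. Then I would lift \(t,v,w,z\) to a surjection from the weighted polynomial ring onto \(R_X/(s)\) by degree induction, and conclude that it is an isomorphism because both are three-dimensional domains with equal Hilbert series.
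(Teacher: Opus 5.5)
Your Steps 1 and 2 follow the paper's route and correctly reach \(\mathcal Q_m\simeq\OO_G(\lfloor m/2\rfloor B)\). There is a genuine gap in Step 3: you doubt this formula and replace it with a wrong one. What you never extract is the multiplicative content of the local \(A_1\) chart. There \(\mathcal Q_1\) is generated by \(z/x\) and \(\mathcal Q_2\) by \(x^{-1}\), and \((z/x)^2=y\cdot x^{-1}\), so a product of two odd-degree pieces acquires the equation of \(B\). Globally this has three consequences:
\begin{itemize}
\item the degree-one piece is \(H^0(G,\OO_G)=\CC t_1\); it is not the section of \(B\);
\item \(t_1^2\) is a nonzero element (domain) of the one-dimensional space \((R_X/(s))_2=H^0(G,\OO_G(B))\), so after rescaling it equals the section \(u_1\) defining \(B\);
\item multiplication by \(t_1\) maps \((R_X/(s))_{2k}\) injectively onto \((R_X/(s))_{2k+1}\), since both have dimension \(h^0(G,kB)\).
\end{itemize}
The even part is \(R(G,B)\) with degrees doubled, because \(2D\) is Cartier in codimension one on \(D\), so the identifications \(\mathcal Q_{2k}\simeq\OO_D(kB)\) respect products. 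Hence
\[
R_X/(s)\simeq R(G,B)^{[2]}[t_1]/(t_1^2-u_1)\simeq\CC[t_1,v_{12},w_{28},z_{42}].
\]
There is no degree conflict: \(x_1\) does sit in degree \(2\), but it becomes redundant as \(t_1^2\). The degree-one generator is a genuinely new odd-degree element, not a relabelled \(x_1\).

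Your proposed ceiling fix is wrong. Since \(\lceil a\rceil+\lceil b\rceil\ge\lceil a+b\rceil\), the spaces \(H^0(G,\OO_G(\lceil m/2\rceil B))\) carry no natural multiplication. Their dimension in degree \(11\) is \(h^0(G,6B)=2\), whereas the target ring has dimension \(1\) there.

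The Hilbert-series check cannot settle this as you describe it. There is no independent formula for \(h^0(X,mD)\): \(D\) is non-Cartier along the surface \(B\), so Lemma~\ref{lem:Weilrecurrence} does not apply, and "restricting \(H=84D\)" just returns you to the quotient you are computing. Computed from the floor formula, the series is
\[
(1+t)\,H_{R(G,B)}(t^2)=\frac{1}{(1-t)(1-t^{12})(1-t^{28})(1-t^{42})},
\]
which confirms the floor version. Equality of Hilbert series alone still does not give you the surjection from the weighted polynomial ring; that needs the relation \(t_1^2=u_1\) above.

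Two smaller points. Writing \(\mathcal Q_m\simeq\OO_G(\lfloor m\Theta\rfloor)\) via Demazure presupposes that \(R_X/(s)\) is normal; the paper avoids this by computing the reflexive sheaves \(\mathcal Q_m\) and their products directly in codimension one on \(D\). Also, both rings in your final comparison are four-dimensional, not three-dimensional.
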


\begin{proof}
For \(m\ge0\), put
\[
 \mathcal Q_m=\OO_X(mD)\big/s\OO_X((m-1)D).
\]
By Lemma~\ref{lem:divisorialCM} and the depth lemma,
\(\mathcal Q_m\) is a rank-one Cohen--Macaulay sheaf on the normal threefold \(D\), and is therefore reflexive. We can thus compute the sheaves \(\mathcal Q_m\) and their multiplication in codimension one on \(D\).

At the generic point of every prime divisor of \(D\) other than \(B\), \(X\) is smooth and \(D\) is Cartier by Lemma~\ref{lem:normalmember}, so this is ordinary Cartier restriction.
At the generic point of \(B\), Lemma~\ref{lem:normalmember}
gives the local transverse \(A_1\) model
\[
 U=\Spec\CC[x,y,z]/(xy-z^2),\qquad
 D=(x=z=0),\qquad B=(y=0)\text{ on }D,
\]
where in fractional-module notation,
\[
 \OO_U(D)=\OO_U+\OO_U\frac zx,\qquad
 \OO_U(2D)=x^{-1}\OO_U.
\]
Thus \(\mathcal Q_1\) is locally generated by \(z/x\),
\(\mathcal Q_2\) by \(1/x\), and
\begin{equation}\label{eq:paritymultiplication}
 \left(\frac zx\right)^2=y\,\frac1x.
\end{equation}
Since \(2D\) is Cartier at the generic point of every prime divisor of \(D\) and restricts to
\[
 (2D)|_D\qsim\frac1{42}L\sim B
\]
in \(\Cl(D)=\ZZ[B]\), we have
\[
 \mathcal Q_2\simeq\OO_D(B),\qquad
 \mathcal Q_1^{[2]}\simeq\OO_D((2D)|_D-B)\simeq\OO_D,
\]
and hence \(\mathcal Q_1\simeq\OO_D\) because \(\Cl(D)\) is torsion-free. Repeating this local calculation and extending reflexively gives
\begin{equation}\label{eq:fourfoldquotientsheaves}
 \mathcal Q_m\simeq\OO_D(\lfloor m/2\rfloor B)
 \qquad(m\ge0),
\end{equation}
where the product of two odd-degree pieces acquires the defining section of \(B\) by
\eqref{eq:paritymultiplication}, and all other products are ordinary products of divisorial sheaves on \(D\).

For every \(m\ge0\), \((m-1)D-K_X\qsim(m+83)D\) is ample, so the integral \(\QQ\)-Cartier Weil-divisor form of Kawamata--Viehweg vanishing
\cite[Theorem 1.2]{FujinoKV} gives
\(H^1(X,\OO_X((m-1)D))=0\), and hence
\[
 R_X/(s)\simeq\bigoplus_{m\ge0}H^0(D,\mathcal Q_m).
\]
Writing \(R(D,B)=\CC[u_1,v_6,w_{14},z_{21}]\) from Proposition~\ref{prop:leaf42},
where \(u_1\) defines \(B\), \eqref{eq:fourfoldquotientsheaves} and \eqref{eq:paritymultiplication} give
\[
 R_X/(s)\simeq R(D,B)^{[2]}[t_1]/(t_1^2-u_1),\qquad \deg t_1=1,
\]
where \(R(D,B)^{[2]}\) denotes \(R(D,B)\) with all degrees doubled (after rescaling \(u_1\) to absorb any nonzero constant). Eliminating \(u_1\) gives \(R_X/(s)\simeq\CC[t_1,v_{12},w_{28},z_{42}]\).
\end{proof}

\begin{proof}[Proof of the equality assertion in
Theorem~\ref{thm:canonical}]
Assume \(V=3528\). Lifting the four generators of
\(R_X/(s)\) from Lemma~\ref{lem:fourfoldquotient} to \(R_X\) and adjoining \(s\) gives a graded surjection
\[
 \CC[S,T,V_0,W,Z]\longrightarrow R_X,\qquad
 \deg(S,T,V_0,W,Z)=(1,1,12,28,42).
\]
Since both rings have dimension five and \(R_X\) is a domain, this map is an isomorphism, and because \(D\) is ample
\(\QQ\)-Cartier,
\[
 X\simeq\Proj R_X=\PP(1,1,12,28,42),
\]
with \(\Cl(X)=\ZZ[D]\) and \(-K_X\sim84D\).
\end{proof}

\subsection{A gap below the maximal volume}

\begin{corollary}\label{cor:volumegap}
Under the hypotheses of Theorem~\ref{thm:canonical}, if
\((-K_X)^4\ge3130\), then
\(X\simeq\PP(1,1,12,28,42)\). Thus every other member of
this class has anticanonical volume at most \(3129\).
\end{corollary}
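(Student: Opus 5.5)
The plan is to rerun the proof of Proposition~\ref{prop:canonicalsharp} with the hypothesis \(V\ge3130\) instead of \(V=3528\). The aim is to recover exactly the equality data \eqref{eq:canonicalequalitydata}. The whole of Section~\ref{sec:canonical-equality} (Proposition~\ref{prop:leaf42}, Lemmas~\ref{lem:normalmember} and \ref{lem:fourfoldquotient}, and the final Proj argument) uses only those data, so it then gives \(X\simeq\PP(1,1,12,28,42)\).

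\emph{Excluding the easy cases.} If \(\TT_X\) has no rank-three HN term, then \(V\le 16875/8<3130\). So the pencil of Lemma~\ref{lem:pencilirreducible} exists. If some divisorial fiber is nonreduced, then \(V\le2646\) by Proposition~\ref{prop:canonicalsharp}, so \(\eta=1\). Next consider the majorant \eqref{eq:canonicalmajorant}. The inequality \(3130\le V\le 5625/(64\tau(1-6\tau))\) forces
\[
\tau(1-6\tau)\le \tfrac{5625}{64\cdot3130}\approx0.0281 ,
\]
hence \(\tau\lesssim0.0358\); the other root is irrelevant. The key step is then to show \(\tau\ell<2\), so that Lemma~\ref{lem:smallmass} applies.

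\emph{The main obstacle.} The bound \(\tau\ell\le 5625\tau/(128(1-6\tau))\) used in Proposition~\ref{prop:canonicalsharp} only gives \(\tau\ell<2\) for \(\tau<1/32\). At \(\tau\approx0.0358\) it is marginally above \(2\). I will close the window \(1/32\le\tau\lesssim0.0358\) by sharpening the majorant. The first thing to try is to exploit that the admissible \(\lambda\) come from integral block data \eqref{eq:blocks}. This means enumerating the finitely many vectors \((\lambda_1,\lambda_2,\lambda_3,\tau)\) with \(\tau\) in the window, \(\lambda_1\le1/2\) and \(\lambda_1+\lambda_2\le5/6\), and checking that \(B(\lambda)<3130\) for each. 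In most cases \(\lambda\) is far from \(v(\tau)\), because \(\lambda_1=1/2\), \(\lambda_2=1/3\) exactly and \(\tau\) small need large \(q\). The second thing to try is to use the integrality of \(\ell=\tau V/2\) together with \(\ell\le 5625/(128(1-6\tau))<57\) and \(\tau\ell\ge2\), and derive a contradiction with the finite jet estimate \eqref{eq:finitejet} applied to \(D\). I expect this numerical window to be the only delicate point.

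\emph{Pinning down \(\ell\) and \(b\).} Once \(\tau\ell<2\), Lemma~\ref{lem:smallmass} gives \(V=2\ell^2/b\) with \(1\le b<2\) and \(\ell\in\{1,\dots,22,24,30,42\}\). If \(\ell\le30\), then \(V\le1800\). Hence \(\ell=42\), and \(3528/b\ge3130\) gives \(b\le 3528/3130<1.13\). To force \(b=1\), redo the adjunction computation of Lemma~\ref{lem:normalmember} with \(D|_G\qsim(\tau/2)L\). This gives \(L^2\operatorname{Diff}_G(0)=\tau\ell/2=b/2\in[1/2,0.565)\). By Lemma~\ref{lem:ordinarydifferent}, every positive coefficient of the different is \(1/2\) or at least \(2/3\), and prime divisors have positive integral \(L^2\)-degree, so \(b/2=1/2\). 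Therefore \(\tau=1/42\), \(\ell=42\), \(b=1\) and \(H\qsim84D\). This is precisely \eqref{eq:canonicalequalitydata}, and the equality argument of Section~\ref{sec:canonical-equality} concludes.
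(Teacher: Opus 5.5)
Your outline matches the paper's proof except at the step you call the main obstacle. There the proposal has a gap: you never establish \(\tau\ell<2\), and your first suggested remedy cannot work. The window \(1/32\le\tau\lesssim0.0358\) is an artifact of rounding, and the missing observation is just exact arithmetic.

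\emph{The window is empty.} The inequality \(\frac{5625\tau}{128(1-6\tau)}<2\) is equivalent to \(7161\tau<256\), i.e.\ \(\tau<t_0:=256/7161\approx0.035749\). The value \(1/32\) in Proposition~\ref{prop:canonicalsharp} was only a convenient choice. At \(\tau=t_0\) one has \(1-6t_0=5625/7161\), so the majorant in \eqref{eq:canonicalmajorant} equals \(\frac{5625}{64t_0(1-6t_0)}=\frac{7161^2}{16384}\approx3129.88<3130\). The map \(t\mapsto\frac{5625}{64t(1-6t)}\) is decreasing on \((0,1/12]\). Hence \(V\ge3130\) together with \(V\le\eta V\le B(\lambda)\le B(v(\tau))\) forces \(\tau<t_0\). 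Your own bound \(\tau\le0.03575\) already lies below \(t_0\); rounding it up to \(0.0358\) is what created the spurious window. Then \(\tau\ell=\eta\tau^2V/2\le\frac{5625\tau}{128(1-6\tau)}<2\) directly. This is exactly the paper's argument, and it explains the constant \(3130\): it is the first integer above \(7161^2/16384\).

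\emph{Why the enumeration fails.} The window consists precisely of the \(\tau\) with \(B(v(\tau))\ge3130\), and \(v(\tau)\) itself is realized by integral block data. For example, \(q=30\) and \(c=(15,10,4,1)\) give \(\lambda=(1/2,1/3,2/15,1/30)\), which satisfies \eqref{eq:canonicalslopes} and has \(B(\lambda)=843750/256\approx3296>3130\). So checking \(B(\lambda)<3130\) vector by vector is false in the window. For this vector \(\tau\ell\le1.84<2\), so it is disposed of by Lemma~\ref{lem:smallmass}, not by the volume bound. Your second remedy is too vague to evaluate and is unnecessary.

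Once \(\tau<t_0\) is in place, the rest of your argument is the paper's:
\begin{itemize}
\item \(\eta=1\) from the bound \(2646\);
\item \(\ell=42\) from Liu's list;
\item \(b=1\) because the ordinary different has \(L^2\)-degree \(b/2<2/3\).
\end{itemize}
Concluding via \(V=2\ell^2/b=3528\) and the equality case of Theorem~\ref{thm:canonical} then finishes the proof.
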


\begin{proof}
Put
\[
 t_0=\frac{256}{7161},\qquad
 M_0=\frac{7161^2}{16384},
\]
so that
\[
 \frac{5625}{64t_0(1-6t_0)}=M_0,\qquad
 \frac{5625t_0}{128(1-6t_0)}=2,\qquad
 3129<M_0<3130.
\]
Since \(V\) is an integer, \(V\ge3130\) implies \(V>M_0\). Because \(16875/8<M_0\), the pencil defined by the rank-three HN term of \(\TT_X\) exists, and \(\tau<t_0\) (otherwise \eqref{eq:canonicalmajorant} and \(\eta V\le B(\lambda)\) would give \(V\le M_0\)).
Thus
\[
 \tau\ell=\frac{\eta\tau^2V}{2}
 \le\frac{5625\tau}{128(1-6\tau)}<2,
\]
and Lemma~\ref{lem:smallmass} applies.

A nonreduced fiber would give \(V\le2646\) by Proposition~\ref{prop:canonicalsharp}, so \(\eta=1\), \(V=2\ell^2/b\) with \(b\ge1\), and
\(\ell\in\{1,\ldots,22,24,30,42\}\).
If \(\ell\ne42\), then \(V\le2\cdot30^2=1800\),
a contradiction. Thus
\[
 \ell=42,\qquad b=\frac{3528}{V},\qquad 1\le b<\frac43.
\]
Ordinary adjunction on the normalization \(G\) of \(D\), with
\(D\qsim(\tau/2)H\), gives
\[
 L^2\operatorname{Diff}_{G}(0)
 =L^2(D|_G)=\frac{\tau\ell}{2}
 =\frac b2\in\left[\frac12,\frac23\right).
\]
Since every prime divisor on \(G\) has positive integral \(L^2\)-degree, Lemma~\ref{lem:ordinarydifferent} forces \(L^2\operatorname{Diff}_G(0)=1/2\), so \(b=1\), \(V=3528\), and Theorem~\ref{thm:canonical} gives \(X\simeq\PP(1,1,12,28,42)\).
\end{proof}

\section{Bounds from the non-Cartier locus}\label{sec:finite}

We now study the case where the dimension of \(\NCart(A)\) is restricted. By Lemma~\ref{lem:divisibility}, the dimension of \(\NCart(A)\) controls the divisibility of \(V\):
\[
\begin{array}{c|cccc}
\NCart(A)&\varnothing&\dim\le0&\dim\le1&\dim\le2\\ \hline
\text{divisibility}&q^4\mid V&q^3\mid V&q^2\mid V&q\mid V.
\end{array}
\]
Throughout the next three subsections, we assume that \(A\) is Cartier outside finitely many points (\(\dim\NCart(A)\le 0\)) and prove Theorem~\ref{thm:finite}.

\subsection{The numerical reduction for a finite locus}
For \(q\ge12\), one has \(\lambda_4\ge1/q\): if the last
HN block has rank one, this follows from its positive integral
determinant degree, while if its rank is two, three, or four,
Proposition~\ref{prop:canonicalslopes} gives \(1/12,1/6,\) or \(1/4\). Thus
\((1/2,1/3,1/6-1/q,1/q)\) majorizes \(\lambda\),
and Lemma~\ref{lem:jet-equality} and Theorem~\ref{thm:jets} give
\begin{equation}\label{eq:continuousq}
 V\le\frac{5625q^2}{64(q-6)}.
\end{equation}
Combining \eqref{eq:continuousq} with \(V\ge q^3\)
gives \(q(q-6)\le5625/64\), so \(q\le12\).
Enumerating the positive block compositions
\eqref{eq:blocks} satisfying \eqref{eq:canonicalslopes} and rounding
\(B(\lambda)\) down to a multiple of \(q^3\) gives the preliminary numerical upper bounds in
Table~\ref{tab:finitecaps}.
\begin{table}[htbp]
\centering
\caption{Numerical upper bounds before applying the finite
section estimates and analysing equality.}\label{tab:finitecaps}
\begin{tabular}{@{}cr@{\qquad\qquad}cr@{}}
\toprule
\(q\)&Upper bound&\(q\)&Upper bound\\
\midrule
1&625&7&686\\
2&1048&8&1024\\
3&783&9&729\\
4&1216&10&1000\\
5&1500&11&1331\\
6&1944&12&1728\\
\bottomrule
\end{tabular}
\end{table}

Suppose \(V>1024\), so \(q\in\{2,4,5,6,11,12\}\) by Table~\ref{tab:finitecaps}.
For every admissible vector \(\lambda\) in these cases, \(B(\lambda)<2V\), so
Lemma~\ref{lem:cover} gives \(H\sim qA\) and Corollary~\ref{lem:WeilRR} applies.

When \(q>2\) and \(N=h^0(A)\ge3\), \eqref{eq:Weilsmall} and \eqref{eq:quadraticgrowth} give
\begin{equation}\label{eq:finiteNgrowth}
 h^0(2A)=2N+k\quad(k=V/q^3),\qquad
 N\le\frac{k+6}{2}.
\end{equation}
The vectors \(\lambda\) at \(q\in\{5,6,11,12\}\) with
\(q^3\lfloor B(\lambda)/q^3\rfloor>1024\) are:
\[
\begin{array}{c|c|c|c}
q&\lambda&\text{Chern lower bound for }N/k&
       \text{Finite-jet upper bound for }N\\ \hline
5&(2/5,2/5,1/10,1/10)&41/48&10\\
6&(5/12,5/12,1/12,1/12)&9/8&10\\
6&(1/2,1/3,1/12,1/12)&53/48&10\\
11&(9/22,9/22,1/11,1/11)&55/16&4\\
11&(5/11,4/11,1/11,1/11)&41/12&4\\
12&(5/12,5/12,1/12,1/12)&4&4\\
12&(1/2,1/3,1/12,1/12)&47/12&4,
\end{array}
\]
where the third column comes from \(N=k/6+C/(12q)\) and \eqref{eq:chern}, and the fourth column records
\(N(\lambda,q):=\max \{n:\J_\lambda(n)\leq n/q\}\).
For \(q=5\), \(k\ge9\), and \(41k/48\le(k+6)/2\) gives \(k\le144/17<9\).
For \(q=6\), \(k\ge5\), and \(53k/48\le(k+6)/2\) gives \(k\le144/29<5\).
For \(q=11,12\), Table~\ref{tab:finitecaps} forces \(k=1\), and the above table forces \(N=4\), giving \(h^0(2A)=9<10=4N-6\), a contradiction.

For \(q=2\), the only vector \(\lambda\) with \(B(\lambda)>1024\) is
\((1/2,1/6,1/6,1/6)\). Positive degree compositions of \(2\)
for the rank-five extension \(\EE_X\) are
\((5;2)\), \((1,4;1,1)\), and \((2,3;1,1)\), each having \(\sum_i\nu_i^2\le 5/16\).
Lemma~\ref{lem:chern} for \(\EE_X\) gives \(C\ge11V/32\), so
\[
                             P_1\ge1+\frac{25}{128}V.
\]
For integer weights \((3,1,1,1)\) and allowed average weight \(6\), Theorem~\ref{thm:jets} gives \(P_1\le202\), so \(V\le\lfloor201\cdot128/25\rfloor=1029\), and \(8\mid V\) gives \(V\le1024\).

For \(q=4\), the vectors \(\lambda\) with \(B(\lambda)>1024\) are
\((3/8,3/8,1/8,1/8)\) and \((1/2,1/4,1/8,1/8)\).
For the first, \(N\ge5k/8\) and \(N\le10\), whereas $V=64k>1024$ gives \(k\ge17\) and $N\geq 85/8>10$. For the second,
\(29k/48\le N\le11\) and \(17\le k\le19\), which excludes $k=19$ and leaves \(k=17,18\):
\begin{equation}\label{eq:finite4}
\begin{array}{c|r|r|r|c}
V&N&C&P_1&\lambda\\ \hline
1088&11&392&215&(1/2,1/4,1/8,1/8)\\
1152&11&384&225&(1/2,1/4,1/8,1/8).
\end{array}
\end{equation}

\subsection{Excluding \(V=1152\) using the canonical extension}
For \(q=4\), a rank-one first HN block of \(\EE_X\) has determinant
degree at most two by \eqref{eq:anchorpolygon} and Proposition~\ref{prop:canonicalslopes}, leaving the compositions
\[
\begin{split}
(r_j;c_j)={}&(5;4),\ (1,4;1,3),\ (1,4;2,2),\
 (2,3;2,2),\\
 & (2,3;3,1),\ (3,2;3,1),\ (1,1,3;2,1,1).
\end{split}
\]
The maximum of \(\sum_i\nu_i^2\) is \(1/3\), attained uniquely by
\((1,1,3;2,1,1)\), so Lemma~\ref{lem:chern} gives \(C\ge\frac{1-1/3}{2} V=V/3\). For \((V,C)=(1152,384)\), equality holds, so \(\EE_X\) has type \((1,1,3;2,1,1)\) and its final rank-three reflexive HN quotient has determinant \(A\) and zero orbifold discriminant. Since \(B(1/2,1/4,1/8,1/8)=1250<2V\), Lemma~\ref{lem:cover} and
Lemma~\ref{lem:flat} make \(A\) divisible by three in \(\Cl(X)\), contradicting the primitivity of $A$.

\subsection{Excluding \(V=1088\) via the image of \(|A|\)}
We first record an intersection estimate for rational maps whose images have dimension \(n-1\).

\begin{lemma}\label{lem:degreebridge}
Suppose \(D\equiv tH\) is an effective \(\QQ\)-Cartier Weil
divisor on a normal projective \(n\)-fold, \(H\) is ample
Cartier, and the image \(Y\) of \(|D|\) has dimension \(n-1\).
If the general fiber cycle has \(H\)-degree at least \(\ell\),
then
\[
                              \ell\deg Y\le t^{n-1}H^n.
\]
\end{lemma}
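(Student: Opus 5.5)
We resolve the indeterminacy of \(\phi_{|D|}\) and compute the degree of \(D\) against \(H^{n-1}\) by splitting it into a pullback of a hyperplane section of \(Y\) plus an effective remainder. Let \(\mu:\widehat X\to X\) be a resolution on which the moving part of \(\mu^*|D|\) becomes basepoint free, with associated morphism \(\psi:\widehat X\to Y\subset\PP^M\). Write \(\mu^*D=M+F\), where \(M=\psi^*\OO_Y(1)\) is the moving part and \(F\ge0\) is the fixed part. Here \(\mu^*D\) is the \(\QQ\)-Cartier pullback, so \(F\) is an effective \(\QQ\)-divisor. Put \(\widehat H=\mu^*H\).

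\emph{Step 1 (reduce to \(M\)).} Since \(\widehat H\) is nef and \(F\) is effective, \(\widehat H^{n-1}F\ge0\). Hence
\[
 tH^n=D\cdot H^{n-1}=\mu^*D\cdot\widehat H^{n-1}\ge M\cdot\widehat H^{n-1}.
\]
This is not yet the required inequality. We want \(t^{n-1}H^n\), which involves \(M^{n-1}\widehat H\), not \(M\widehat H^{n-1}\). So we instead aim for
\[
 M^{n-1}\cdot\widehat H\le(\mu^*D)^{n-1}\cdot\widehat H=t^{n-1}H^n.
\]
To get this, we expand \((M+F)^{n-1}\widehat H-M^{n-1}\widehat H=\sum_{i=0}^{n-2}M^{i}(M+F)^{n-2-i}F\,\widehat H\). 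Each term is an intersection of the effective divisor \(F\) with nef classes (\(M\), \(\mu^*D\), \(\widehat H\)). The class \(\mu^*D\) is nef because it is the pullback of \(D\equiv tH\), which is ample since \(t>0\). Each term is therefore nonnegative, which gives the displayed inequality.

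\emph{Step 2 (identify \(M^{n-1}\widehat H\)).} Since \(\dim Y=n-1\), we have \(M^{n-1}=\psi^*(\OO_Y(1)^{n-1})\). This class is represented by \(\deg Y\) general fibers of \(\psi\), namely the preimage of \(\deg Y\) general points of \(Y\) cut out by \(n-1\) general hyperplanes. Each such fiber is a curve whose pushforward to \(X\) is the general fiber cycle. By the projection formula, each fiber has \(\widehat H\)-degree equal to the \(H\)-degree of its image, which is at least \(\ell\) by hypothesis. If \(\psi\) has disconnected fibers, the fiber cycle over a point is the sum of its components, and the hypothesis is read as applying to this whole cycle. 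Thus \(M^{n-1}\widehat H\ge\ell\deg Y\). Combining with Step 1 gives \(\ell\deg Y\le t^{n-1}H^n\).

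\emph{Main obstacle.} The delicate point is the nonnegativity in Step 1: every mixed product \(M^{i}(\mu^*D)^{n-2-i}F\widehat H\) must involve only nef classes apart from the single effective factor \(F\). This holds because \(M\) is basepoint free and \(\mu^*D\), \(\widehat H\) are pullbacks of ample classes. A second point is the interpretation of the \(\QQ\)-divisor \(F\), which is handled by clearing denominators. A lesser check is that the general fiber cycle is exactly what \(M^{n-1}\) computes, including multiplicities when \(\psi\) is not generically reduced onto its image. In characteristic zero the general fiber is reduced, so the fibers of \(\psi\) over \(\deg Y\) general points of a general linear section of \(Y\) account for \(M^{n-1}\) with multiplicity one.
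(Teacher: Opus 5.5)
Your proof is correct and follows the paper's argument. You write \(\mu^*D\) as the moving part \(M\) plus an effective fixed part, use nefness to get \(M^{n-1}\widehat H\le(\mu^*D)^{n-1}\widehat H=t^{n-1}H^n\) (your telescoping expansion spells out the paper's one-line nefness claim), and identify \(M^{n-1}\widehat H\) with \(\deg Y\) times the \(H\)-degree of the general fiber cycle via general hyperplane sections of \(Y\) and the projection formula.
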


\begin{proof}
Resolve the rational map and the base ideal of \(|D|\) via
\(\pi:Z\to X\), with moving divisor \(M\) and morphism \(g:Z\to Y\).
Pulling back effective Cartier multiples of \(D\) shows that \(\pi^*D\sim_{\QQ}M+E\) with \(E\ge0\).
Since \(\pi^*D\equiv t\pi^*H\) and \(M\) are nef, intersecting \(Y\) with \(n-1\) general hyperplanes and applying the projection formula gives
\[
 t^{n-1}H^n
 \ge \pi^*H\,M^{n-1}
 =(\deg Y)\,\deg_H(\text{general fiber cycle})
 \ge\ell\deg Y.
\]
\end{proof}

For \(V=1088\), scaling \(\lambda=(1/2,1/4,1/8,1/8)\) by \(8\) gives integer weights \((4,2,1,1)\) and allowed average weight \(2\)
for \(A\). The \(11\) monomials of smallest weight consist of \(1\), two monomials of weight \(1\),
four of weight \(2\), and four of weight \(3\), with total weight
\(2+8+12=22=11\cdot 2\). Thus equality holds in \eqref{eq:finitejet}. Among these \(11\) monomials, exactly three have ordinary total degree one (the two variables of weight one and the variable of weight two; the variable of weight four cannot occur). Lemma~\ref{lem:jet-equality} therefore gives
\(\dim Y=3\) for \(Y=\im(|A|)\subset\PP^{10}\).

Let \(\overline{Q}_1\) be the saturation of the image of \(Q_1\) in \(\EE=\Omega_X^{[1]}\,\widehat\otimes\,\OO_X(A)\). Equality in \eqref{eq:finitejet} gives \(c_1(Q_1)H^3/V=1/4\). The normalized HN slopes of \(\EE\) are \(1/8,1/8,0,-1/4\). Saturation cannot decrease the degree, and the HN polygon gives the upper bound \(1/4\). Thus \(\overline{Q}_1\) attains this bound. The strict gap between the third and fourth slopes implies that \(\overline{Q}_1\) is the rank-three HN term. After twisting by \(-A\), this term is the annihilator of the maximal destabilizing subsheaf \(\FF\subset\TT_X\). Since \(Q_1\) and \(\overline{Q}_1\) agree at a general point, their annihilators agree there as well.

At a general smooth point $x\in X$, choose $s_0\in H^0(X,A)$ with $s_0(x)\neq 0$. In the resulting local trivialization of $A$, we have
\[
Q_1(-A)_x=\operatorname{Span} \{d(s/s_0)_x:s\in H^0(X,A)\},
\]
and hence
\[
\ker(d\phi_{|A|,x})=\operatorname{Ann}(Q_1(-A)_x)=\operatorname{Ann}(\overline{Q}_1(-A)_x)=\FF_x.
\]
Thus the general fibers of $\phi_{|A|}$ are tangent to the rank-one foliation $\FF$, which has $-K_{\FF}\equiv \frac12 H$.

A normalized general leaf of $\FF$ is
\(\PP^1\), and the proof of Proposition~\ref{prop:canonicalslopes}
gives \(\deg L\ge2\) and \(\frac12\deg L\le1\), so \(\deg L=2\).
Every general fiber cycle of $\phi_{|A|}$ therefore has \(H\)-degree
at least two, and Lemma~\ref{lem:degreebridge} with \(t=1/4\) and \(\ell=2\) gives
\[
                         2\deg Y\le V/4^3=17.
\]
Since a nondegenerate threefold in \(\PP^{10}\) has degree at least
$10-3+1=8$, \(Y\) has degree \(8\) and is a variety
of minimal degree of codimension \(7\) in \(\PP^{10}\). By the classification of varieties of minimal degree
\cite{EH} (see also \cite[Theorem 0.1]{EGHP}),
\(Y\) is a rational normal scroll (possibly a cone), since
a quadric has codimension one and a cone over the Veronese
surface has codimension three.
The scroll equations include a nonzero \(2\times2\) minor of
a matrix of linear forms, which has rank at
most four; pulling it back to \(X\) gives a nonzero rank-at-most-four relation in
\(\Sym^2H^0(A)\), contradicting Lemma~\ref{lem:primitive}(2).
This excludes \(V=1088\) in \eqref{eq:finite4} and proves \(V\le1024\).

The weighted projective space \(\PP(1,1,1,1,4)\) is well-formed, \(\QQ\)-factorial, and
Gorenstein of Picard number one, with a single isolated canonical quotient
singularity \(\frac14(1,1,1,1)\) (of nontrivial ages
\(1,2,3\)), anticanonical volume
\(8^4/4=1024\), and primitive polarization \(A=\OO(1)\) Cartier outside the singular point. This completes the proof of
Theorem~\ref{thm:finite}.

\subsection{An example with a one-dimensional non-Cartier locus}

\begin{example}\label{ex:curve}
Let \(X=\PP(1,1,1,3,6)\) and \(A=\OO_X(1)\).
This weighted projective space is well-formed, \(\QQ\)-factorial,
of Picard number one, and Gorenstein canonical since every weight divides \(12\). The non-Cartier locus of \(A\) is the weighted line \(\PP(3,6)\), and
\[
 q=12,\qquad V=\frac{12^4}{3\cdot6}=1152.
\]
Thus \(q^2\mid V\) while \(q^3\nmid V\), showing that the bound \(1024\) of Theorem~\ref{thm:finite}
does not extend unchanged when \(\dim\NCart(A)=1\).
\end{example}

\section{Further discussion}\label{sec:discussion}

Theorems~\ref{thm:terminal} and \ref{thm:canonical} determine
the optimal anticanonical volumes in the terminal and canonical
\(\QQ\)-factorial Gorenstein classes with Picard number one.
It is natural to ask whether the canonical bound \(3528\)
continues to hold at arbitrary Picard number, as supported by the toric
theorem of Balletti--Kasprzyk--Nill \cite{BKN}.

Our proof indicates the role of each hypothesis.
The Cartier property of \(H=-K_X\) gives the integral discrepancy
\(R_G\) and the positive integral intersection numbers \(L^2C\) in
\eqref{eq:massidentity}. Picard number one makes both the reflexive normal
sheaf \(N_{\FF}=(\TT_X/\FF)^{**}\) and the general pencil member \(D\) \(\QQ\)-proportional to \(H\). This yields
\(\ell=\eta\tau V/2\) and \(L\qsim\ell B\), connecting
the ambient volume \(V\) to the threefold Weil index \(\ell=q_{\QQ}(G)\). At higher Picard
number, slope inequalities with respect to \(H^3\) alone do not give these divisor
relations, and one would also need a relative estimate controlling the
restriction of \(N_{\FF}\) to the general leaf.

Proposition~\ref{prop:normaljets} gives an asymptotic jet bound on any normal projective variety with positive normalized tangent slopes. The section-ring computation in Section~\ref{sec:canonical-equality} uses the \(\QQ\)-Cartier property of \(D\) and the specified adjunction data; extending the volume theorem beyond the present setting would also require the preceding foliation and divisor-class arguments. Note also that passing to a small \(\QQ\)-factorialization can
increase the Picard number and gives a weak Fano model,
so it does not directly reduce the non-\(\QQ\)-factorial case to
Theorem~\ref{thm:canonical}.

For canonical Gorenstein fourfolds of higher Cartier Fano
index, adjunction to a Cartier fundamental divisor provides
additional reductions. The Cartier index-one case has no such
smaller Cartier polarization; while Liu's positivity and
nonvanishing results \cite{Liu} still apply in
this setting, one needs to relate the section
spaces and the reflexive normal sheaf \(N_{\FF}\) of a pencil to the geometry
of its leaf. We leave these extensions for future work.

\clearpage

\end{document}